\newcommand\ord[1]{\mathbf{#1}}
\newcommand\pos[1]{\mathbf{#1}}
\newcommand\cat[1]{\mathbf{#1}}
\newcommand\Cat{\mathbf{Cat}}
\newcommand\Pos{\mathbf{Pos}}
\newcommand\Pre{\mathbf{Pre}}
\newcommand\Set{\mathbf{Set}}
\newcommand\Ord{\mathbf{\Delta}}
\newcommand\Int{\boldsymbol{\nabla}}
\newcommand\email[1]{\href{mailto:#1}{#1}}
\newcommand\begin{aligned}\scalebox{[}{\input{2}}\end{aligned}]{\begin{aligned}\scalebox{#1}{\input{#2}}\end{aligned}}
\DeclarePairedDelimiter\set{\{}{\}}
\DeclarePairedDelimiter\abs{\lvert}{\rvert}
\DeclareMathOperator\id{id}
\DeclareMathOperator\colim{colim}
\DeclareMathOperator\upper{\uparrow}
\DeclareMathOperator\Upper{\Uparrow}
\DeclareMathOperator\Ext{\mathsf{Ext}}
\DeclareMathOperator\Acl{\mathsf{Acl}}
\DeclareMathOperator\Acc{\mathsf{Acc}}
\DeclareMathOperator\reg{\mathsf{r}}
\DeclareMathOperator\sing{\mathsf{s}}
\DeclareMathOperator\Reg{\mathsf{R}}
\DeclareMathOperator\Zig{\mathsf{Zig}}
\DeclareMathOperator\Exp{\mathsf{Exp}}
\DeclareMathOperator\Sing{\mathsf{Sing}}
\DeclareMathOperator\Relab{\mathsf{Relab}}
\newtheorem{theorem}{Theorem}
\newtheorem{corollary}[theorem]{Corollary}
\newtheorem{lemma}[theorem]{Lemma}
\newtheorem{proposition}[theorem]{Proposition}
\theoremstyle{definition}
\newtheorem{definition}[theorem]{Definition}
\newtheorem{example}[theorem]{Example}
\title{The theory and applications of anticolimits}
\author{%
Calin Tataru
\thanks{University of Cambridge, \email{calin.tataru@cl.cam.ac.uk}}
\and
Jamie Vicary
\thanks{University of Cambridge, \email{jamie.vicary@cl.cam.ac.uk}}
}
\begin{document}

\maketitle

\begin{abstract}
Colimits are a fundamental construction in category theory.
They provide a way to construct new objects by gluing together existing objects that are related in some way.
We introduce a complementary notion of \emph{anticolimits}, which provide a way to decompose an object into a colimit of other objects.
While anticolimits are not unique in general, we establish that in the presence of pullbacks, there is a ``canonical'' anticolimit which characterises the existence of other anticolimits.
We also provide convenient techniques for computing anticolimits, by changing either the shape or ambient category.

The main motivation for this work is the development of a new method, known as \emph{anticontraction}, for constructing homotopies in the proof assistant \textsc{homotopy.io} for finitely presented $n$-categories.
Anticontraction complements the existing contraction method and facilitates the construction of homotopies increasing the complexity of a term, enhancing the usability of the proof assistant.
For example, it simplifies the naturality move and third Reidemeister move.
\end{abstract}

\section{Introduction} \label{sec:introduction}

Colimits are an important tool in computer science for gluing things together.
A notable example is \emph{double pushout graph rewriting}~\cite{EPS73,EEHP04} which uses pushouts to glue the right-hand side of a rewrite in the place of a matching subgraph.
Colimits have also been applied to merge patches in a version control system~\cite{AMLH16}, compose open Petri nets~\cite{BGMS21}, compose systems in reinforcement learning~\cite{BST22}, give a formal semantics of visual programming~\cite{Gib02}, model multiple inheritance in programming languages~\cite{LP90}, and compose specifications in formal software development~\cite{Smi06}.

In each of these examples, one can ask the following question: when does a particular object {arise} as a colimit of some given shape?
To answer this, we introduce the notion of \emph{anticolimits}.
This construction allow us to take an object, equipped with a family of incoming morphisms, and express it as the colimit of a poset-shaped diagram.
Anticolimits may not exist in general, and unlike colimits, they are not guaranteed to be unique.
We may also consider \emph{anticocones}, a similar construction where we do not require the universal property.

To illustrate this, consider the case of pushouts.
Recall that, for a pair of morphisms $p : D \to A$ and $q : D \to B$, their pushout is a pair of morphisms $f : A \to C$ and $g : B \to C$ such that the resulting square commutes and satisfies a universal property:
\[
\begin{tikzcd}
& C \ar[dd, phantom, "\llcorner" rotate=45, very near start] & \\
A \urar[dashed, "f"] && B \ular[dashed, "g"'] \\
& D \ular["p"] \urar["q"'] & 
\end{tikzcd}
\]
Here, the solid arrows are the {input} to the pushout and the dashed arrows represent the {output}.
The universal property implies that pushouts are unique up to isomorphism.

The corresponding notion of anticolimit is called an \emph{antipushout}, and is defined as follows.
Given a pair of morphisms $f : A \to C$ and $g : B \to C$, an antipushout is given by morphisms $p : D \to A$ and $q : D \to B$ such that the following square is a pushout:
\[
\begin{tikzcd}
& C \ar[dd, phantom, "\llcorner" rotate=45, very near start] & \\
A \urar["f"] && B \ular["g"'] \\
& D \ular[dashed, "p"] \urar[dashed, "q"'] & 
\end{tikzcd}
\]
If a pushout is understood as gluing two objects together along a common part to form a new object, then an antipushout gives us a way to decompose an object into such a gluing arrangement.

\paragraph{General results}
In~\Cref{sec:anticolimits} we study the abstract theory of anticolimits.
We show that in the presence of pullbacks, a canonical anticocone can be found for any posetal diagram shape, which is terminal in a category of anticocones; furthermore, if any anticolimit exists, then the canonical anticocone will also be an anticolimit.
Unpacking this for antipushouts, we obtain the following corollary:
if a pair of morphisms have a pullback, then they have an antipushout iff the pullback is an antipushout:
\[
\begin{tikzcd}[column sep=small]
& C \ar[dd, phantom, "\llcorner" rotate=45, very near start] & \\
A \urar["f"] && B \ular["g"'] \\
& A \times_C B \ular["\pi_A"] \urar["\pi_B"'] \ar[uu, phantom, "\urcorner" rotate=45, very near start] &
\end{tikzcd}
\]
We also provide general techniques for constructing anticolimits from existing anticolimits, by changing either the shape of the diagrams, or the category in which they live.
We use these techniques to give an explicit construction procedure for anticolimits of sets, and then extend this to construct anticolimits of posets, preorders, and finite ordinals.

\paragraph{Application}
The motivating application for this work lies in the proof assistant \textsc{homotopy.io} for $n$-dimensional string diagrams, which is based on the theory of associative $n$-categories.
The main mechanism for proof construction is \emph{contraction}, presented at LICS 2019~\cite{RV19}, which uses a colimit operation to perform a homotopy contraction of part of the string diagram.
This is convenient when simplifying proof objects, but is far less elegant when we need to create nontrivial structure.

We illustrate this problem in \Cref{fig:naturality-move-v1}, showing the traditional proof assistant interaction steps required to construct the 4-cell representing the naturality move for the braiding.
A total of five user interaction steps are required, most of which are non-obvious.
With anticontraction, the procedure is far more straightforward; the new workflow is given in \Cref{fig:naturality-move-v2}, which shows that  only two  moves are now necessary, both of which are intuitively obvious.

For a more intricate example, we consider the 4-cell representing the Reidemeister III\ move from knot theory; in terms of the terminology of a braided monoidal category, this can be considered ``naturality for the braiding''.
Using anticontraction, we can simplify this workflow from fifteen steps (see \Cref{fig:reidemeister-move-v1}) down to seven steps (see \Cref{fig:reidemeister-move-v2}).

We develop the necessary theory for this in \Cref{sec:zigzags,sec:anticontraction}, including the relevant background on the theory of zigzag categories, which gives the inductive basis for the string diagram formalism.
We develop a series of results which show factorisation structures can be lifted through the zigzag construction, enabling anticontractions to the computed recursively.

\begin{figure}
\centering
\(
\begin{aligned}\scalebox{0.4}{\begin{tikzpicture}
\definecolor{generator-1-2-0-pos}{RGB}{192, 57, 43}
\definecolor{generator-2-2-0-pos}{RGB}{243, 156, 18}
\definecolor{generator-3-3-0-pos}{RGB}{142, 68, 173}
\definecolor{generator-0-0-1-pos}{RGB}{246, 245, 244}

\newcommand{\wire}[2]{
  \ifdefined\recolor\draw[color=\recolor, line width=10pt]\else\draw[color=#1, line width=5pt]\fi #2;
}
\newcommand{\clipped}[3]{
\begin{scope}
  \newcommand{\recolor}{#1}
  \clip#3;
  #2
\end{scope}
}

\begin{scope}[transparency group]
% Background surfaces
\fill[generator-0-0-1-pos] (0,0) -- (6,0) -- (6,6) -- (0,6) -- (0,0);
\newcommand{\layer}[1]{
  \clipped{generator-0-0-1-pos}{#1}{(0,0) -- (6,0) -- (6,6) -- (0,6) -- (0,0)}
  #1
}

% Wire layers
\wire{generator-2-2-0-pos}{(4,3) .. controls (4,3.8) and (3.6,4) .. (3,4) .. controls (2.4,4) and (2,4.2) .. (2,5)};
\layer{
\wire{generator-1-2-0-pos}{(2,0) -- (2,3) .. controls (2,3.8) and (2.4,4) .. (3,4) .. controls (3.6,4) and (4,4.2) .. (4,5) -- (4,6)};
\wire{generator-2-2-0-pos}{(4,0) -- (4,3)(2,5) -- (2,6)};
}
\end{scope}
\fill[generator-3-3-0-pos] (2,2) circle (0.21);
\end{tikzpicture}}\end{aligned}
\leadsto
\begin{aligned}\scalebox{0.4}{\begin{tikzpicture}
\definecolor{generator-1-2-0-pos}{RGB}{192, 57, 43}
\definecolor{generator-2-2-0-pos}{RGB}{243, 156, 18}
\definecolor{generator-3-3-0-pos}{RGB}{142, 68, 173}
\definecolor{generator-0-0-1-pos}{RGB}{246, 245, 244}

\newcommand{\wire}[2]{
  \ifdefined\recolor\draw[color=\recolor, line width=10pt]\else\draw[color=#1, line width=5pt]\fi #2;
}
\newcommand{\clipped}[3]{
\begin{scope}
  \newcommand{\recolor}{#1}
  \clip#3;
  #2
\end{scope}
}

\begin{scope}[transparency group]
% Background surfaces
\fill[generator-0-0-1-pos] (0,0) -- (6,0) -- (6,4) -- (0,4) -- (0,0);
\newcommand{\layer}[1]{
  \clipped{generator-0-0-1-pos}{#1}{(0,0) -- (6,0) -- (6,4) -- (0,4) -- (0,0)}
  #1
}

% Wire layers
\wire{generator-2-2-0-pos}{(4,1) .. controls (4,1.8) and (3.6,2) .. (3,2) .. controls (2.4,2) and (2,2.2) .. (2,3)};
\layer{
\wire{generator-1-2-0-pos}{(2,0) -- (2,1) .. controls (2,1.8) and (2.4,2) .. (3,2) .. controls (3.6,2) and (4,2.2) .. (4,3) -- (4,4)};
\wire{generator-2-2-0-pos}{(4,0) -- (4,1)(2,3) -- (2,4)};
}
\end{scope}
\fill[generator-3-3-0-pos] (3,2) circle (0.21);
\end{tikzpicture}}\end{aligned}
\leadsto
\begin{aligned}\scalebox{0.4}{\begin{tikzpicture}
\definecolor{generator-1-2-0-pos}{RGB}{192, 57, 43}
\definecolor{generator-2-2-0-pos}{RGB}{243, 156, 18}
\definecolor{generator-3-3-0-pos}{RGB}{142, 68, 173}
\definecolor{generator-0-0-1-pos}{RGB}{246, 245, 244}

\newcommand{\wire}[2]{
  \ifdefined\recolor\draw[color=\recolor, line width=10pt]\else\draw[color=#1, line width=5pt]\fi #2;
}
\newcommand{\clipped}[3]{
\begin{scope}
  \newcommand{\recolor}{#1}
  \clip#3;
  #2
\end{scope}
}

\begin{scope}[transparency group]
% Background surfaces
\fill[generator-0-0-1-pos] (0,0) -- (6,0) -- (6,6) -- (0,6) -- (0,0);
\newcommand{\layer}[1]{
  \clipped{generator-0-0-1-pos}{#1}{(0,0) -- (6,0) -- (6,6) -- (0,6) -- (0,0)}
  #1
}

% Wire layers
\wire{generator-2-2-0-pos}{(4,1) .. controls (4,1.8) and (3,1.6) .. (3,2) .. controls (3,2.4) and (4,2.2) .. (4,3) .. controls (4,3.8) and (3.6,4) .. (3,4) .. controls (2.4,4) and (2,4.2) .. (2,5)};
\layer{
\wire{generator-1-2-0-pos}{(2,0) -- (2,1) .. controls (2,1.8) and (3,1.6) .. (3,2) .. controls (3,2.4) and (2,2.2) .. (2,3) .. controls (2,3.8) and (2.4,4) .. (3,4) .. controls (3.6,4) and (4,4.2) .. (4,5) -- (4,6)};
\wire{generator-2-2-0-pos}{(4,0) -- (4,1)(2,5) -- (2,6)};
}
\end{scope}
\fill[generator-3-3-0-pos] (3,4) circle (0.21);
\end{tikzpicture}}\end{aligned}
\leadsto
\begin{aligned}\scalebox{0.4}{\begin{tikzpicture}
\definecolor{generator-1-2-0-pos}{RGB}{192, 57, 43}
\definecolor{generator-2-2-0-pos}{RGB}{243, 156, 18}
\definecolor{generator-3-3-0-pos}{RGB}{142, 68, 173}
\definecolor{generator-0-0-1-pos}{RGB}{246, 245, 244}

\newcommand{\wire}[2]{
  \ifdefined\recolor\draw[color=\recolor, line width=10pt]\else\draw[color=#1, line width=5pt]\fi #2;
}
\newcommand{\clipped}[3]{
\begin{scope}
  \newcommand{\recolor}{#1}
  \clip#3;
  #2
\end{scope}
}

\begin{scope}[transparency group]
% Background surfaces
\fill[generator-0-0-1-pos] (0,0) -- (6,0) -- (6,8) -- (0,8) -- (0,0);
\newcommand{\layer}[1]{
  \clipped{generator-0-0-1-pos}{#1}{(0,0) -- (6,0) -- (6,8) -- (0,8) -- (0,0)}
  #1
}

% Wire layers
\wire{generator-2-2-0-pos}{(4,1) .. controls (4,1.8) and (3.6,2) .. (3,2) .. controls (2.4,2) and (2,2.2) .. (2,3) .. controls (2,3.8) and (2.4,4) .. (3,4) .. controls (3.6,4) and (4,4.2) .. (4,5) .. controls (4,5.8) and (3.6,6) .. (3,6) .. controls (2.4,6) and (2,6.2) .. (2,7)};
\layer{
\wire{generator-1-2-0-pos}{(2,0) -- (2,1) .. controls (2,1.8) and (2.4,2) .. (3,2) .. controls (3.6,2) and (4,2.2) .. (4,3) .. controls (4,3.8) and (3.6,4) .. (3,4) .. controls (2.4,4) and (2,4.2) .. (2,5) .. controls (2,5.8) and (2.4,6) .. (3,6) .. controls (3.6,6) and (4,6.2) .. (4,7) -- (4,8)};
\wire{generator-2-2-0-pos}{(4,0) -- (4,1)(2,7) -- (2,8)};
}
\end{scope}
\fill[generator-3-3-0-pos] (3,6) circle (0.21);
\end{tikzpicture}}\end{aligned}
\leadsto
\begin{aligned}\scalebox{0.4}{\begin{tikzpicture}
\definecolor{generator-1-2-0-pos}{RGB}{192, 57, 43}
\definecolor{generator-2-2-0-pos}{RGB}{243, 156, 18}
\definecolor{generator-3-3-0-pos}{RGB}{142, 68, 173}
\definecolor{generator-0-0-1-pos}{RGB}{246, 245, 244}

\newcommand{\wire}[2]{
  \ifdefined\recolor\draw[color=\recolor, line width=10pt]\else\draw[color=#1, line width=5pt]\fi #2;
}
\newcommand{\clipped}[3]{
\begin{scope}
  \newcommand{\recolor}{#1}
  \clip#3;
  #2
\end{scope}
}

\begin{scope}[transparency group]
% Background surfaces
\fill[generator-0-0-1-pos] (0,0) -- (6,0) -- (6,6) -- (0,6) -- (0,0);
\newcommand{\layer}[1]{
  \clipped{generator-0-0-1-pos}{#1}{(0,0) -- (6,0) -- (6,6) -- (0,6) -- (0,0)}
  #1
}

% Wire layers
\wire{generator-2-2-0-pos}{(4,1) .. controls (4,1.8) and (3.6,2) .. (3,2) .. controls (2.4,2) and (2,2.2) .. (2,3) .. controls (2,3.8) and (3,3.6) .. (3,4) .. controls (3,4.4) and (2,4.2) .. (2,5)};
\layer{
\wire{generator-1-2-0-pos}{(2,0) -- (2,1) .. controls (2,1.8) and (2.4,2) .. (3,2) .. controls (3.6,2) and (4,2.2) .. (4,3) .. controls (4,3.8) and (3,3.6) .. (3,4) .. controls (3,4.4) and (4,4.2) .. (4,5) -- (4,6)};
\wire{generator-2-2-0-pos}{(4,0) -- (4,1)(2,5) -- (2,6)};
}
\end{scope}
\fill[generator-3-3-0-pos] (3,4) circle (0.21);
\end{tikzpicture}}\end{aligned}
\leadsto
\begin{aligned}\scalebox{0.4}{\begin{tikzpicture}
\definecolor{generator-1-2-0-pos}{RGB}{192, 57, 43}
\definecolor{generator-2-2-0-pos}{RGB}{243, 156, 18}
\definecolor{generator-3-3-0-pos}{RGB}{142, 68, 173}
\definecolor{generator-0-0-1-pos}{RGB}{246, 245, 244}

\newcommand{\wire}[2]{
  \ifdefined\recolor\draw[color=\recolor, line width=10pt]\else\draw[color=#1, line width=5pt]\fi #2;
}
\newcommand{\clipped}[3]{
\begin{scope}
  \newcommand{\recolor}{#1}
  \clip#3;
  #2
\end{scope}
}

\begin{scope}[transparency group]
% Background surfaces
\fill[generator-0-0-1-pos] (0,0) -- (6,0) -- (6,6) -- (0,6) -- (0,0);
\newcommand{\layer}[1]{
  \clipped{generator-0-0-1-pos}{#1}{(0,0) -- (6,0) -- (6,6) -- (0,6) -- (0,0)}
  #1
}

% Wire layers
\wire{generator-2-2-0-pos}{(4,1) .. controls (4,1.8) and (3.6,2) .. (3,2) .. controls (2.4,2) and (2,2.2) .. (2,3)};
\layer{
\wire{generator-1-2-0-pos}{(2,0) -- (2,1) .. controls (2,1.8) and (2.4,2) .. (3,2) .. controls (3.6,2) and (4,2.2) .. (4,3) -- (4,6)};
\wire{generator-2-2-0-pos}{(4,0) -- (4,1)(2,3) -- (2,6)};
}
\end{scope}
\fill[generator-3-3-0-pos] (4,4) circle (0.21);
\end{tikzpicture}}\end{aligned}
\)
\caption{Naturality of the braiding without anticontraction.}
\label{fig:naturality-move-v1}
\end{figure}

\begin{figure}
\centering
\(
\begin{aligned}\scalebox{0.4}{\begin{tikzpicture}
\definecolor{generator-1-2-0-pos}{RGB}{192, 57, 43}
\definecolor{generator-2-2-0-pos}{RGB}{243, 156, 18}
\definecolor{generator-3-3-0-pos}{RGB}{142, 68, 173}
\definecolor{generator-0-0-1-pos}{RGB}{246, 245, 244}

\newcommand{\wire}[2]{
  \ifdefined\recolor\draw[color=\recolor, line width=10pt]\else\draw[color=#1, line width=5pt]\fi #2;
}
\newcommand{\clipped}[3]{
\begin{scope}
  \newcommand{\recolor}{#1}
  \clip#3;
  #2
\end{scope}
}

\begin{scope}[transparency group]
% Background surfaces
\fill[generator-0-0-1-pos] (0,0) -- (6,0) -- (6,6) -- (0,6) -- (0,0);
\newcommand{\layer}[1]{
  \clipped{generator-0-0-1-pos}{#1}{(0,0) -- (6,0) -- (6,6) -- (0,6) -- (0,0)}
  #1
}

% Wire layers
\wire{generator-2-2-0-pos}{(4,3) .. controls (4,3.8) and (3.6,4) .. (3,4) .. controls (2.4,4) and (2,4.2) .. (2,5)};
\layer{
\wire{generator-1-2-0-pos}{(2,0) -- (2,3) .. controls (2,3.8) and (2.4,4) .. (3,4) .. controls (3.6,4) and (4,4.2) .. (4,5) -- (4,6)};
\wire{generator-2-2-0-pos}{(4,0) -- (4,3)(2,5) -- (2,6)};
}
\end{scope}
\fill[generator-3-3-0-pos] (2,2) circle (0.21);
\end{tikzpicture}}\end{aligned}
\leadsto
\begin{aligned}\scalebox{0.4}{\input{figures/naturality_v2/regular1.tikz}}\end{aligned}
\leadsto
\begin{aligned}\scalebox{0.4}{\input{figures/naturality_v2/regular2.tikz}}\end{aligned}
\)
\caption{Naturality of the braiding with anticontraction.}
\label{fig:naturality-move-v2}
\end{figure}

\begin{figure}
\centering
\(
\begin{aligned}\scalebox{0.3}{\begin{tikzpicture}
\definecolor{generator-3-2-0-pos}{RGB}{142, 68, 173}
\definecolor{generator-1-2-0-pos}{RGB}{192, 57, 43}
\definecolor{generator-2-2-0-pos}{RGB}{243, 156, 18}
\definecolor{generator-0-0-1-pos}{RGB}{246, 245, 244}

\newcommand{\wire}[2]{
  \ifdefined\recolor\draw[color=\recolor, line width=10pt]\else\draw[color=#1, line width=5pt]\fi #2;
}
\newcommand{\clipped}[3]{
\begin{scope}
  \newcommand{\recolor}{#1}
  \clip#3;
  #2
\end{scope}
}

\begin{scope}[transparency group]
% Background surfaces
\fill[generator-0-0-1-pos] (0,0) -- (8,0) -- (8,8) -- (0,8) -- (0,0);
\newcommand{\layer}[1]{
  \clipped{generator-0-0-1-pos}{#1}{(0,0) -- (8,0) -- (8,8) -- (0,8) -- (0,0)}
  #1
}

% Wire layers
\wire{generator-2-2-0-pos}{(4,1) .. controls (4,1.8) and (3.6,2) .. (3,2) .. controls (2.4,2) and (2,2.2) .. (2,3)};
\wire{generator-3-2-0-pos}{(6,3) .. controls (6,3.8) and (5.6,4) .. (5,4) .. controls (4.4,4) and (4,4.2) .. (4,5) .. controls (4,5.8) and (3.6,6) .. (3,6) .. controls (2.4,6) and (2,6.2) .. (2,7)};
\layer{
\wire{generator-3-2-0-pos}{(6,0) -- (6,3)(2,7) -- (2,8)};
\wire{generator-1-2-0-pos}{(2,0) -- (2,1) .. controls (2,1.8) and (2.4,2) .. (3,2) .. controls (3.6,2) and (4,2.2) .. (4,3) .. controls (4,3.8) and (4.4,4) .. (5,4) .. controls (5.6,4) and (6,4.2) .. (6,5) -- (6,8)};
\wire{generator-2-2-0-pos}{(4,0) -- (4,1)(2,3) -- (2,5) .. controls (2,5.8) and (2.4,6) .. (3,6) .. controls (3.6,6) and (4,6.2) .. (4,7) -- (4,8)};
}
\end{scope}
\end{tikzpicture}}\end{aligned}
\leadsto
\begin{aligned}\scalebox{0.3}{\begin{tikzpicture}
\definecolor{generator-3-2-0-pos}{RGB}{142, 68, 173}
\definecolor{generator-1-2-0-pos}{RGB}{192, 57, 43}
\definecolor{generator-2-2-0-pos}{RGB}{243, 156, 18}
\definecolor{generator-0-0-1-pos}{RGB}{246, 245, 244}

\newcommand{\wire}[2]{
  \ifdefined\recolor\draw[color=\recolor, line width=10pt]\else\draw[color=#1, line width=5pt]\fi #2;
}
\newcommand{\clipped}[3]{
\begin{scope}
  \newcommand{\recolor}{#1}
  \clip#3;
  #2
\end{scope}
}

\begin{scope}[transparency group]
% Background surfaces
\fill[generator-0-0-1-pos] (0,0) -- (8,0) -- (8,10) -- (0,10) -- (0,0);
\newcommand{\layer}[1]{
  \clipped{generator-0-0-1-pos}{#1}{(0,0) -- (8,0) -- (8,10) -- (0,10) -- (0,0)}
  #1
}

% Wire layers
\wire{generator-2-2-0-pos}{(4,3) .. controls (4,3.8) and (3.6,4) .. (3,4) .. controls (2.4,4) and (2,4.2) .. (2,5)};
\wire{generator-3-2-0-pos}{(6,1) .. controls (6,1.8) and (5,1.6) .. (5,2) .. controls (5,2.4) and (6,2.2) .. (6,3)(6,5) .. controls (6,5.8) and (5.6,6) .. (5,6) .. controls (4.4,6) and (4,6.2) .. (4,7) .. controls (4,7.8) and (3.6,8) .. (3,8) .. controls (2.4,8) and (2,8.2) .. (2,9)};
\layer{
\wire{generator-3-2-0-pos}{(6,0) -- (6,1)(6,3) -- (6,5)(2,9) -- (2,10)};
\wire{generator-1-2-0-pos}{(2,0) -- (2,3) .. controls (2,3.8) and (2.4,4) .. (3,4) .. controls (3.6,4) and (4,4.2) .. (4,5) .. controls (4,5.8) and (4.4,6) .. (5,6) .. controls (5.6,6) and (6,6.2) .. (6,7) -- (6,10)};
\wire{generator-2-2-0-pos}{(4,0) -- (4,1) .. controls (4,1.8) and (5,1.6) .. (5,2) .. controls (5,2.4) and (4,2.2) .. (4,3)(2,5) -- (2,7) .. controls (2,7.8) and (2.4,8) .. (3,8) .. controls (3.6,8) and (4,8.2) .. (4,9) -- (4,10)};
}
\end{scope}
\end{tikzpicture}}\end{aligned}
\leadsto
\begin{aligned}\scalebox{0.3}{\begin{tikzpicture}
\definecolor{generator-3-2-0-pos}{RGB}{142, 68, 173}
\definecolor{generator-1-2-0-pos}{RGB}{192, 57, 43}
\definecolor{generator-2-2-0-pos}{RGB}{243, 156, 18}
\definecolor{generator-0-0-1-pos}{RGB}{246, 245, 244}

\newcommand{\wire}[2]{
  \ifdefined\recolor\draw[color=\recolor, line width=10pt]\else\draw[color=#1, line width=5pt]\fi #2;
}
\newcommand{\clipped}[3]{
\begin{scope}
  \newcommand{\recolor}{#1}
  \clip#3;
  #2
\end{scope}
}

\begin{scope}[transparency group]
% Background surfaces
\fill[generator-0-0-1-pos] (0,0) -- (8,0) -- (8,8) -- (0,8) -- (0,0);
\newcommand{\layer}[1]{
  \clipped{generator-0-0-1-pos}{#1}{(0,0) -- (8,0) -- (8,8) -- (0,8) -- (0,0)}
  #1
}

% Wire layers
\wire{generator-3-2-0-pos}{(6,1) .. controls (6,1.8) and (4,1.6) .. (4,2) .. controls (4,2.4) and (6,2.2) .. (6,3)};
\layer{
\wire{generator-2-2-0-pos}{(4,1) -- (4,2) .. controls (4,2.4) and (2,2.2) .. (2,3)};
\wire{generator-3-2-0-pos}{(6,3) .. controls (6,3.8) and (5.6,4) .. (5,4) .. controls (4.4,4) and (4,4.2) .. (4,5) .. controls (4,5.8) and (3.6,6) .. (3,6) .. controls (2.4,6) and (2,6.2) .. (2,7)};
}
\layer{
\wire{generator-3-2-0-pos}{(6,0) -- (6,1)(2,7) -- (2,8)};
\wire{generator-1-2-0-pos}{(2,0) -- (2,1) .. controls (2,1.8) and (4,1.6) .. (4,2) -- (4,3) .. controls (4,3.8) and (4.4,4) .. (5,4) .. controls (5.6,4) and (6,4.2) .. (6,5) -- (6,8)};
\wire{generator-2-2-0-pos}{(4,0) -- (4,1)(2,3) -- (2,5) .. controls (2,5.8) and (2.4,6) .. (3,6) .. controls (3.6,6) and (4,6.2) .. (4,7) -- (4,8)};
}
\end{scope}
\end{tikzpicture}}\end{aligned}
\leadsto
\begin{aligned}\scalebox{0.3}{\begin{tikzpicture}
\definecolor{generator-3-2-0-pos}{RGB}{142, 68, 173}
\definecolor{generator-1-2-0-pos}{RGB}{192, 57, 43}
\definecolor{generator-2-2-0-pos}{RGB}{243, 156, 18}
\definecolor{generator-0-0-1-pos}{RGB}{246, 245, 244}

\newcommand{\wire}[2]{
  \ifdefined\recolor\draw[color=\recolor, line width=10pt]\else\draw[color=#1, line width=5pt]\fi #2;
}
\newcommand{\clipped}[3]{
\begin{scope}
  \newcommand{\recolor}{#1}
  \clip#3;
  #2
\end{scope}
}

\begin{scope}[transparency group]
% Background surfaces
\fill[generator-0-0-1-pos] (0,0) -- (8,0) -- (8,6) -- (0,6) -- (0,0);
\newcommand{\layer}[1]{
  \clipped{generator-0-0-1-pos}{#1}{(0,0) -- (8,0) -- (8,6) -- (0,6) -- (0,0)}
  #1
}

% Wire layers
\wire{generator-3-2-0-pos}{(6,1) .. controls (6,1.8) and (5.2,2) .. (4,2) -- (4,3)};
\layer{
\wire{generator-2-2-0-pos}{(4,1) -- (4,2) .. controls (2.8,2) and (2,2.2) .. (2,3)};
\wire{generator-3-2-0-pos}{(4,3) .. controls (4,3.8) and (3.6,4) .. (3,4) .. controls (2.4,4) and (2,4.2) .. (2,5)};
}
\layer{
\wire{generator-3-2-0-pos}{(6,0) -- (6,1)(2,5) -- (2,6)};
\wire{generator-1-2-0-pos}{(2,0) -- (2,1) .. controls (2,1.8) and (2.8,2) .. (4,2) .. controls (5.2,2) and (6,2.2) .. (6,3) -- (6,6)};
\wire{generator-2-2-0-pos}{(4,0) -- (4,1)(2,3) .. controls (2,3.8) and (2.4,4) .. (3,4) .. controls (3.6,4) and (4,4.2) .. (4,5) -- (4,6)};
}
\end{scope}
\end{tikzpicture}}\end{aligned}
\leadsto
\begin{aligned}\scalebox{0.3}{\begin{tikzpicture}
\definecolor{generator-3-2-0-pos}{RGB}{142, 68, 173}
\definecolor{generator-1-2-0-pos}{RGB}{192, 57, 43}
\definecolor{generator-2-2-0-pos}{RGB}{243, 156, 18}
\definecolor{generator-0-0-1-pos}{RGB}{246, 245, 244}

\newcommand{\wire}[2]{
  \ifdefined\recolor\draw[color=\recolor, line width=10pt]\else\draw[color=#1, line width=5pt]\fi #2;
}
\newcommand{\clipped}[3]{
\begin{scope}
  \newcommand{\recolor}{#1}
  \clip#3;
  #2
\end{scope}
}

\begin{scope}[transparency group]
% Background surfaces
\fill[generator-0-0-1-pos] (0,0) -- (8,0) -- (8,4) -- (0,4) -- (0,0);
\newcommand{\layer}[1]{
  \clipped{generator-0-0-1-pos}{#1}{(0,0) -- (8,0) -- (8,4) -- (0,4) -- (0,0)}
  #1
}

% Wire layers
\wire{generator-3-2-0-pos}{(6,1) .. controls (6,1.8) and (5.2,2) .. (4,2) .. controls (2.8,2) and (2,2.2) .. (2,3)};
\layer{
\wire{generator-2-2-0-pos}{(4,1) -- (4,3)};
}
\layer{
\wire{generator-3-2-0-pos}{(6,0) -- (6,1)(2,3) -- (2,4)};
\wire{generator-1-2-0-pos}{(2,0) -- (2,1) .. controls (2,1.8) and (2.8,2) .. (4,2) .. controls (5.2,2) and (6,2.2) .. (6,3) -- (6,4)};
\wire{generator-2-2-0-pos}{(4,0) -- (4,1)(4,3) -- (4,4)};
}
\end{scope}
\end{tikzpicture}}\end{aligned}
\leadsto
\begin{aligned}\scalebox{0.3}{\begin{tikzpicture}
\definecolor{generator-3-2-0-pos}{RGB}{142, 68, 173}
\definecolor{generator-1-2-0-pos}{RGB}{192, 57, 43}
\definecolor{generator-2-2-0-pos}{RGB}{243, 156, 18}
\definecolor{generator-0-0-1-pos}{RGB}{246, 245, 244}

\newcommand{\wire}[2]{
  \ifdefined\recolor\draw[color=\recolor, line width=10pt]\else\draw[color=#1, line width=5pt]\fi #2;
}
\newcommand{\clipped}[3]{
\begin{scope}
  \newcommand{\recolor}{#1}
  \clip#3;
  #2
\end{scope}
}

\begin{scope}[transparency group]
% Background surfaces
\fill[generator-0-0-1-pos] (0,0) -- (8,0) -- (8,6) -- (0,6) -- (0,0);
\newcommand{\layer}[1]{
  \clipped{generator-0-0-1-pos}{#1}{(0,0) -- (8,0) -- (8,6) -- (0,6) -- (0,0)}
  #1
}

% Wire layers
\wire{generator-3-2-0-pos}{(6,1) .. controls (6,1.8) and (5.2,2) .. (4,2) .. controls (2.8,2) and (2,2.2) .. (2,3)};
\layer{
\wire{generator-2-2-0-pos}{(4,1) -- (4,3) .. controls (4,3.8) and (5,3.6) .. (5,4) .. controls (5,4.4) and (4,4.2) .. (4,5)};
}
\layer{
\wire{generator-3-2-0-pos}{(6,0) -- (6,1)(2,3) -- (2,6)};
\wire{generator-1-2-0-pos}{(2,0) -- (2,1) .. controls (2,1.8) and (2.8,2) .. (4,2) .. controls (5.2,2) and (6,2.2) .. (6,3) .. controls (6,3.8) and (5,3.6) .. (5,4) .. controls (5,4.4) and (6,4.2) .. (6,5) -- (6,6)};
\wire{generator-2-2-0-pos}{(4,0) -- (4,1)(4,5) -- (4,6)};
}
\end{scope}
\end{tikzpicture}}\end{aligned}
\leadsto
\begin{aligned}\scalebox{0.3}{\begin{tikzpicture}
\definecolor{generator-3-2-0-pos}{RGB}{142, 68, 173}
\definecolor{generator-1-2-0-pos}{RGB}{192, 57, 43}
\definecolor{generator-2-2-0-pos}{RGB}{243, 156, 18}
\definecolor{generator-0-0-1-pos}{RGB}{246, 245, 244}

\newcommand{\wire}[2]{
  \ifdefined\recolor\draw[color=\recolor, line width=10pt]\else\draw[color=#1, line width=5pt]\fi #2;
}
\newcommand{\clipped}[3]{
\begin{scope}
  \newcommand{\recolor}{#1}
  \clip#3;
  #2
\end{scope}
}

\begin{scope}[transparency group]
% Background surfaces
\fill[generator-0-0-1-pos] (0,0) -- (8,0) -- (8,8) -- (0,8) -- (0,0);
\newcommand{\layer}[1]{
  \clipped{generator-0-0-1-pos}{#1}{(0,0) -- (8,0) -- (8,8) -- (0,8) -- (0,0)}
  #1
}

% Wire layers
\wire{generator-3-2-0-pos}{(6,1) .. controls (6,1.8) and (5.2,2) .. (4,2) .. controls (2.8,2) and (2,2.2) .. (2,3)};
\layer{
\wire{generator-2-2-0-pos}{(4,1) -- (4,3) .. controls (4,3.8) and (4.4,4) .. (5,4) .. controls (5.6,4) and (6,4.2) .. (6,5) .. controls (6,5.8) and (5.6,6) .. (5,6) .. controls (4.4,6) and (4,6.2) .. (4,7)};
}
\layer{
\wire{generator-3-2-0-pos}{(6,0) -- (6,1)(2,3) -- (2,8)};
\wire{generator-1-2-0-pos}{(2,0) -- (2,1) .. controls (2,1.8) and (2.8,2) .. (4,2) .. controls (5.2,2) and (6,2.2) .. (6,3) .. controls (6,3.8) and (5.6,4) .. (5,4) .. controls (4.4,4) and (4,4.2) .. (4,5) .. controls (4,5.8) and (4.4,6) .. (5,6) .. controls (5.6,6) and (6,6.2) .. (6,7) -- (6,8)};
\wire{generator-2-2-0-pos}{(4,0) -- (4,1)(4,7) -- (4,8)};
}
\end{scope}
\end{tikzpicture}}\end{aligned}
\leadsto
\begin{aligned}\scalebox{0.3}{\begin{tikzpicture}
\definecolor{generator-3-2-0-pos}{RGB}{142, 68, 173}
\definecolor{generator-1-2-0-pos}{RGB}{192, 57, 43}
\definecolor{generator-2-2-0-pos}{RGB}{243, 156, 18}
\definecolor{generator-0-0-1-pos}{RGB}{246, 245, 244}

\newcommand{\wire}[2]{
  \ifdefined\recolor\draw[color=\recolor, line width=10pt]\else\draw[color=#1, line width=5pt]\fi #2;
}
\newcommand{\clipped}[3]{
\begin{scope}
  \newcommand{\recolor}{#1}
  \clip#3;
  #2
\end{scope}
}

\begin{scope}[transparency group]
% Background surfaces
\fill[generator-0-0-1-pos] (0,0) -- (8,0) -- (8,10) -- (0,10) -- (0,0);
\newcommand{\layer}[1]{
  \clipped{generator-0-0-1-pos}{#1}{(0,0) -- (8,0) -- (8,10) -- (0,10) -- (0,0)}
  #1
}

% Wire layers
\wire{generator-3-2-0-pos}{(6,1) .. controls (6,1.8) and (5.2,2) .. (4,2) .. controls (2.8,2) and (2,2.2) .. (2,3)};
\layer{
\wire{generator-2-2-0-pos}{(4,1) -- (4,3) .. controls (4,3.8) and (4.4,4) .. (5,4) .. controls (5.6,4) and (6,4.2) .. (6,5)(6,7) .. controls (6,7.8) and (5.6,8) .. (5,8) .. controls (4.4,8) and (4,8.2) .. (4,9)};
\wire{generator-3-2-0-pos}{(2,5) .. controls (2,5.8) and (3,5.6) .. (3,6) .. controls (3,6.4) and (2,6.2) .. (2,7)};
}
\layer{
\wire{generator-3-2-0-pos}{(6,0) -- (6,1)(2,3) -- (2,5)(2,7) -- (2,10)};
\wire{generator-1-2-0-pos}{(2,0) -- (2,1) .. controls (2,1.8) and (2.8,2) .. (4,2) .. controls (5.2,2) and (6,2.2) .. (6,3) .. controls (6,3.8) and (5.6,4) .. (5,4) .. controls (4.4,4) and (4,4.2) .. (4,5) .. controls (4,5.8) and (3,5.6) .. (3,6) .. controls (3,6.4) and (4,6.2) .. (4,7) .. controls (4,7.8) and (4.4,8) .. (5,8) .. controls (5.6,8) and (6,8.2) .. (6,9) -- (6,10)};
\wire{generator-2-2-0-pos}{(4,0) -- (4,1)(6,5) -- (6,7)(4,9) -- (4,10)};
}
\end{scope}
\end{tikzpicture}}\end{aligned}
\leadsto
\begin{aligned}\scalebox{0.3}{\begin{tikzpicture}
\definecolor{generator-3-2-0-pos}{RGB}{142, 68, 173}
\definecolor{generator-1-2-0-pos}{RGB}{192, 57, 43}
\definecolor{generator-2-2-0-pos}{RGB}{243, 156, 18}
\definecolor{generator-0-0-1-pos}{RGB}{246, 245, 244}

\newcommand{\wire}[2]{
  \ifdefined\recolor\draw[color=\recolor, line width=10pt]\else\draw[color=#1, line width=5pt]\fi #2;
}
\newcommand{\clipped}[3]{
\begin{scope}
  \newcommand{\recolor}{#1}
  \clip#3;
  #2
\end{scope}
}

\begin{scope}[transparency group]
% Background surfaces
\fill[generator-0-0-1-pos] (0,0) -- (8,0) -- (8,12) -- (0,12) -- (0,0);
\newcommand{\layer}[1]{
  \clipped{generator-0-0-1-pos}{#1}{(0,0) -- (8,0) -- (8,12) -- (0,12) -- (0,0)}
  #1
}

% Wire layers
\wire{generator-3-2-0-pos}{(6,1) .. controls (6,1.8) and (5.2,2) .. (4,2) .. controls (2.8,2) and (2,2.2) .. (2,3)};
\layer{
\wire{generator-2-2-0-pos}{(4,1) -- (4,3) .. controls (4,3.8) and (4.4,4) .. (5,4) .. controls (5.6,4) and (6,4.2) .. (6,5)(6,9) .. controls (6,9.8) and (5.6,10) .. (5,10) .. controls (4.4,10) and (4,10.2) .. (4,11)};
\wire{generator-3-2-0-pos}{(2,5) .. controls (2,5.8) and (2.4,6) .. (3,6) .. controls (3.6,6) and (4,6.2) .. (4,7) .. controls (4,7.8) and (3.6,8) .. (3,8) .. controls (2.4,8) and (2,8.2) .. (2,9)};
}
\layer{
\wire{generator-3-2-0-pos}{(6,0) -- (6,1)(2,3) -- (2,5)(2,9) -- (2,12)};
\wire{generator-1-2-0-pos}{(2,0) -- (2,1) .. controls (2,1.8) and (2.8,2) .. (4,2) .. controls (5.2,2) and (6,2.2) .. (6,3) .. controls (6,3.8) and (5.6,4) .. (5,4) .. controls (4.4,4) and (4,4.2) .. (4,5) .. controls (4,5.8) and (3.6,6) .. (3,6) .. controls (2.4,6) and (2,6.2) .. (2,7) .. controls (2,7.8) and (2.4,8) .. (3,8) .. controls (3.6,8) and (4,8.2) .. (4,9) .. controls (4,9.8) and (4.4,10) .. (5,10) .. controls (5.6,10) and (6,10.2) .. (6,11) -- (6,12)};
\wire{generator-2-2-0-pos}{(4,0) -- (4,1)(6,5) -- (6,9)(4,11) -- (4,12)};
}
\end{scope}
\end{tikzpicture}}\end{aligned}
\leadsto
\begin{aligned}\scalebox{0.3}{\begin{tikzpicture}
\definecolor{generator-3-2-0-pos}{RGB}{142, 68, 173}
\definecolor{generator-1-2-0-pos}{RGB}{192, 57, 43}
\definecolor{generator-2-2-0-pos}{RGB}{243, 156, 18}
\definecolor{generator-0-0-1-pos}{RGB}{246, 245, 244}

\newcommand{\wire}[2]{
  \ifdefined\recolor\draw[color=\recolor, line width=10pt]\else\draw[color=#1, line width=5pt]\fi #2;
}
\newcommand{\clipped}[3]{
\begin{scope}
  \newcommand{\recolor}{#1}
  \clip#3;
  #2
\end{scope}
}

\begin{scope}[transparency group]
% Background surfaces
\fill[generator-0-0-1-pos] (0,0) -- (8,0) -- (8,14) -- (0,14) -- (0,0);
\newcommand{\layer}[1]{
  \clipped{generator-0-0-1-pos}{#1}{(0,0) -- (8,0) -- (8,14) -- (0,14) -- (0,0)}
  #1
}

% Wire layers
\wire{generator-3-2-0-pos}{(6,1) .. controls (6,1.8) and (5.2,2) .. (4,2) .. controls (2.8,2) and (2,2.2) .. (2,3)};
\layer{
\wire{generator-2-2-0-pos}{(4,1) -- (4,3) .. controls (4,3.8) and (4.4,4) .. (5,4) .. controls (5.6,4) and (6,4.2) .. (6,5)(6,11) .. controls (6,11.8) and (5.6,12) .. (5,12) .. controls (4.4,12) and (4,12.2) .. (4,13)};
\wire{generator-3-2-0-pos}{(2,5) .. controls (2,5.8) and (2.4,6) .. (3,6) .. controls (3.6,6) and (4,6.2) .. (4,7) .. controls (4,7.8) and (5,7.6) .. (5,8) .. controls (5,8.4) and (4,8.2) .. (4,9) .. controls (4,9.8) and (3.6,10) .. (3,10) .. controls (2.4,10) and (2,10.2) .. (2,11)};
}
\layer{
\wire{generator-3-2-0-pos}{(6,0) -- (6,1)(2,3) -- (2,5)(2,11) -- (2,14)};
\wire{generator-1-2-0-pos}{(2,0) -- (2,1) .. controls (2,1.8) and (2.8,2) .. (4,2) .. controls (5.2,2) and (6,2.2) .. (6,3) .. controls (6,3.8) and (5.6,4) .. (5,4) .. controls (4.4,4) and (4,4.2) .. (4,5) .. controls (4,5.8) and (3.6,6) .. (3,6) .. controls (2.4,6) and (2,6.2) .. (2,7) -- (2,9) .. controls (2,9.8) and (2.4,10) .. (3,10) .. controls (3.6,10) and (4,10.2) .. (4,11) .. controls (4,11.8) and (4.4,12) .. (5,12) .. controls (5.6,12) and (6,12.2) .. (6,13) -- (6,14)};
\wire{generator-2-2-0-pos}{(4,0) -- (4,1)(6,5) -- (6,7) .. controls (6,7.8) and (5,7.6) .. (5,8) .. controls (5,8.4) and (6,8.2) .. (6,9) -- (6,11)(4,13) -- (4,14)};
}
\end{scope}
\end{tikzpicture}}\end{aligned}
\leadsto
\begin{aligned}\scalebox{0.3}{\begin{tikzpicture}
\definecolor{generator-3-2-0-pos}{RGB}{142, 68, 173}
\definecolor{generator-1-2-0-pos}{RGB}{192, 57, 43}
\definecolor{generator-2-2-0-pos}{RGB}{243, 156, 18}
\definecolor{generator-0-0-1-pos}{RGB}{246, 245, 244}

\newcommand{\wire}[2]{
  \ifdefined\recolor\draw[color=\recolor, line width=10pt]\else\draw[color=#1, line width=5pt]\fi #2;
}
\newcommand{\clipped}[3]{
\begin{scope}
  \newcommand{\recolor}{#1}
  \clip#3;
  #2
\end{scope}
}

\begin{scope}[transparency group]
% Background surfaces
\fill[generator-0-0-1-pos] (0,0) -- (8,0) -- (8,16) -- (0,16) -- (0,0);
\newcommand{\layer}[1]{
  \clipped{generator-0-0-1-pos}{#1}{(0,0) -- (8,0) -- (8,16) -- (0,16) -- (0,0)}
  #1
}

% Wire layers
\wire{generator-3-2-0-pos}{(6,1) .. controls (6,1.8) and (5.2,2) .. (4,2) .. controls (2.8,2) and (2,2.2) .. (2,3)};
\layer{
\wire{generator-2-2-0-pos}{(4,1) -- (4,3) .. controls (4,3.8) and (4.4,4) .. (5,4) .. controls (5.6,4) and (6,4.2) .. (6,5)(6,13) .. controls (6,13.8) and (5.6,14) .. (5,14) .. controls (4.4,14) and (4,14.2) .. (4,15)};
\wire{generator-3-2-0-pos}{(2,5) .. controls (2,5.8) and (2.4,6) .. (3,6) .. controls (3.6,6) and (4,6.2) .. (4,7) .. controls (4,7.8) and (4.4,8) .. (5,8) .. controls (5.6,8) and (6,8.2) .. (6,9) .. controls (6,9.8) and (5.6,10) .. (5,10) .. controls (4.4,10) and (4,10.2) .. (4,11) .. controls (4,11.8) and (3.6,12) .. (3,12) .. controls (2.4,12) and (2,12.2) .. (2,13)};
}
\layer{
\wire{generator-3-2-0-pos}{(6,0) -- (6,1)(2,3) -- (2,5)(2,13) -- (2,16)};
\wire{generator-1-2-0-pos}{(2,0) -- (2,1) .. controls (2,1.8) and (2.8,2) .. (4,2) .. controls (5.2,2) and (6,2.2) .. (6,3) .. controls (6,3.8) and (5.6,4) .. (5,4) .. controls (4.4,4) and (4,4.2) .. (4,5) .. controls (4,5.8) and (3.6,6) .. (3,6) .. controls (2.4,6) and (2,6.2) .. (2,7) -- (2,11) .. controls (2,11.8) and (2.4,12) .. (3,12) .. controls (3.6,12) and (4,12.2) .. (4,13) .. controls (4,13.8) and (4.4,14) .. (5,14) .. controls (5.6,14) and (6,14.2) .. (6,15) -- (6,16)};
\wire{generator-2-2-0-pos}{(4,0) -- (4,1)(6,5) -- (6,7) .. controls (6,7.8) and (5.6,8) .. (5,8) .. controls (4.4,8) and (4,8.2) .. (4,9) .. controls (4,9.8) and (4.4,10) .. (5,10) .. controls (5.6,10) and (6,10.2) .. (6,11) -- (6,13)(4,15) -- (4,16)};
}
\end{scope}
\end{tikzpicture}}\end{aligned}
\leadsto
\begin{aligned}\scalebox{0.3}{\begin{tikzpicture}
\definecolor{generator-3-2-0-pos}{RGB}{142, 68, 173}
\definecolor{generator-1-2-0-pos}{RGB}{192, 57, 43}
\definecolor{generator-2-2-0-pos}{RGB}{243, 156, 18}
\definecolor{generator-0-0-1-pos}{RGB}{246, 245, 244}

\newcommand{\wire}[2]{
  \ifdefined\recolor\draw[color=\recolor, line width=10pt]\else\draw[color=#1, line width=5pt]\fi #2;
}
\newcommand{\clipped}[3]{
\begin{scope}
  \newcommand{\recolor}{#1}
  \clip#3;
  #2
\end{scope}
}

\begin{scope}[transparency group]
% Background surfaces
\fill[generator-0-0-1-pos] (0,0) -- (8,0) -- (8,14) -- (0,14) -- (0,0);
\newcommand{\layer}[1]{
  \clipped{generator-0-0-1-pos}{#1}{(0,0) -- (8,0) -- (8,14) -- (0,14) -- (0,0)}
  #1
}

% Wire layers
\wire{generator-3-2-0-pos}{(6,1) .. controls (6,1.8) and (4,1.6) .. (4,2) .. controls (4,2.4) and (2,2.2) .. (2,3)};
\layer{
\wire{generator-2-2-0-pos}{(4,1) -- (4,2) .. controls (4,2.4) and (6,2.2) .. (6,3)(6,11) .. controls (6,11.8) and (5.6,12) .. (5,12) .. controls (4.4,12) and (4,12.2) .. (4,13)};
\wire{generator-3-2-0-pos}{(2,3) .. controls (2,3.8) and (2.4,4) .. (3,4) .. controls (3.6,4) and (4,4.2) .. (4,5) .. controls (4,5.8) and (4.4,6) .. (5,6) .. controls (5.6,6) and (6,6.2) .. (6,7) .. controls (6,7.8) and (5.6,8) .. (5,8) .. controls (4.4,8) and (4,8.2) .. (4,9) .. controls (4,9.8) and (3.6,10) .. (3,10) .. controls (2.4,10) and (2,10.2) .. (2,11)};
}
\layer{
\wire{generator-3-2-0-pos}{(6,0) -- (6,1)(2,11) -- (2,14)};
\wire{generator-1-2-0-pos}{(2,0) -- (2,1) .. controls (2,1.8) and (4,1.6) .. (4,2) -- (4,3) .. controls (4,3.8) and (3.6,4) .. (3,4) .. controls (2.4,4) and (2,4.2) .. (2,5) -- (2,9) .. controls (2,9.8) and (2.4,10) .. (3,10) .. controls (3.6,10) and (4,10.2) .. (4,11) .. controls (4,11.8) and (4.4,12) .. (5,12) .. controls (5.6,12) and (6,12.2) .. (6,13) -- (6,14)};
\wire{generator-2-2-0-pos}{(4,0) -- (4,1)(6,3) -- (6,5) .. controls (6,5.8) and (5.6,6) .. (5,6) .. controls (4.4,6) and (4,6.2) .. (4,7) .. controls (4,7.8) and (4.4,8) .. (5,8) .. controls (5.6,8) and (6,8.2) .. (6,9) -- (6,11)(4,13) -- (4,14)};
}
\end{scope}
\end{tikzpicture}}\end{aligned}
\leadsto
\begin{aligned}\scalebox{0.3}{\begin{tikzpicture}
\definecolor{generator-3-2-0-pos}{RGB}{142, 68, 173}
\definecolor{generator-1-2-0-pos}{RGB}{192, 57, 43}
\definecolor{generator-2-2-0-pos}{RGB}{243, 156, 18}
\definecolor{generator-0-0-1-pos}{RGB}{246, 245, 244}

\newcommand{\wire}[2]{
  \ifdefined\recolor\draw[color=\recolor, line width=10pt]\else\draw[color=#1, line width=5pt]\fi #2;
}
\newcommand{\clipped}[3]{
\begin{scope}
  \newcommand{\recolor}{#1}
  \clip#3;
  #2
\end{scope}
}

\begin{scope}[transparency group]
% Background surfaces
\fill[generator-0-0-1-pos] (0,0) -- (8,0) -- (8,12) -- (0,12) -- (0,0);
\newcommand{\layer}[1]{
  \clipped{generator-0-0-1-pos}{#1}{(0,0) -- (8,0) -- (8,12) -- (0,12) -- (0,0)}
  #1
}

% Wire layers
\wire{generator-3-2-0-pos}{(6,1) .. controls (6,1.8) and (4,1.6) .. (4,2) -- (4,3)};
\layer{
\wire{generator-2-2-0-pos}{(4,1) -- (4,2) .. controls (4,2.4) and (6,2.2) .. (6,3)(6,9) .. controls (6,9.8) and (5.6,10) .. (5,10) .. controls (4.4,10) and (4,10.2) .. (4,11)};
\wire{generator-3-2-0-pos}{(4,3) .. controls (4,3.8) and (4.4,4) .. (5,4) .. controls (5.6,4) and (6,4.2) .. (6,5) .. controls (6,5.8) and (5.6,6) .. (5,6) .. controls (4.4,6) and (4,6.2) .. (4,7) .. controls (4,7.8) and (3.6,8) .. (3,8) .. controls (2.4,8) and (2,8.2) .. (2,9)};
}
\layer{
\wire{generator-3-2-0-pos}{(6,0) -- (6,1)(2,9) -- (2,12)};
\wire{generator-1-2-0-pos}{(2,0) -- (2,1) .. controls (2,1.8) and (4,1.6) .. (4,2) .. controls (4,2.4) and (2,2.2) .. (2,3) -- (2,7) .. controls (2,7.8) and (2.4,8) .. (3,8) .. controls (3.6,8) and (4,8.2) .. (4,9) .. controls (4,9.8) and (4.4,10) .. (5,10) .. controls (5.6,10) and (6,10.2) .. (6,11) -- (6,12)};
\wire{generator-2-2-0-pos}{(4,0) -- (4,1)(6,3) .. controls (6,3.8) and (5.6,4) .. (5,4) .. controls (4.4,4) and (4,4.2) .. (4,5) .. controls (4,5.8) and (4.4,6) .. (5,6) .. controls (5.6,6) and (6,6.2) .. (6,7) -- (6,9)(4,11) -- (4,12)};
}
\end{scope}
\end{tikzpicture}}\end{aligned}
\leadsto
\begin{aligned}\scalebox{0.3}{\begin{tikzpicture}
\definecolor{generator-3-2-0-pos}{RGB}{142, 68, 173}
\definecolor{generator-1-2-0-pos}{RGB}{192, 57, 43}
\definecolor{generator-2-2-0-pos}{RGB}{243, 156, 18}
\definecolor{generator-0-0-1-pos}{RGB}{246, 245, 244}

\newcommand{\wire}[2]{
  \ifdefined\recolor\draw[color=\recolor, line width=10pt]\else\draw[color=#1, line width=5pt]\fi #2;
}
\newcommand{\clipped}[3]{
\begin{scope}
  \newcommand{\recolor}{#1}
  \clip#3;
  #2
\end{scope}
}

\begin{scope}[transparency group]
% Background surfaces
\fill[generator-0-0-1-pos] (0,0) -- (8,0) -- (8,10) -- (0,10) -- (0,0);
\newcommand{\layer}[1]{
  \clipped{generator-0-0-1-pos}{#1}{(0,0) -- (8,0) -- (8,10) -- (0,10) -- (0,0)}
  #1
}

% Wire layers
\wire{generator-3-2-0-pos}{(6,1) .. controls (6,1.8) and (4,1.6) .. (4,2) .. controls (4,2.4) and (6,2.2) .. (6,3)};
\layer{
\wire{generator-2-2-0-pos}{(4,1) -- (4,3)(6,7) .. controls (6,7.8) and (5.6,8) .. (5,8) .. controls (4.4,8) and (4,8.2) .. (4,9)};
\wire{generator-3-2-0-pos}{(6,3) .. controls (6,3.8) and (5.6,4) .. (5,4) .. controls (4.4,4) and (4,4.2) .. (4,5) .. controls (4,5.8) and (3.6,6) .. (3,6) .. controls (2.4,6) and (2,6.2) .. (2,7)};
}
\layer{
\wire{generator-3-2-0-pos}{(6,0) -- (6,1)(2,7) -- (2,10)};
\wire{generator-1-2-0-pos}{(2,0) -- (2,1) .. controls (2,1.8) and (4,1.6) .. (4,2) .. controls (4,2.4) and (2,2.2) .. (2,3) -- (2,5) .. controls (2,5.8) and (2.4,6) .. (3,6) .. controls (3.6,6) and (4,6.2) .. (4,7) .. controls (4,7.8) and (4.4,8) .. (5,8) .. controls (5.6,8) and (6,8.2) .. (6,9) -- (6,10)};
\wire{generator-2-2-0-pos}{(4,0) -- (4,1)(4,3) .. controls (4,3.8) and (4.4,4) .. (5,4) .. controls (5.6,4) and (6,4.2) .. (6,5) -- (6,7)(4,9) -- (4,10)};
}
\end{scope}
\end{tikzpicture}}\end{aligned}
\leadsto
\begin{aligned}\scalebox{0.3}{\begin{tikzpicture}
\definecolor{generator-3-2-0-pos}{RGB}{142, 68, 173}
\definecolor{generator-1-2-0-pos}{RGB}{192, 57, 43}
\definecolor{generator-2-2-0-pos}{RGB}{243, 156, 18}
\definecolor{generator-0-0-1-pos}{RGB}{246, 245, 244}

\newcommand{\wire}[2]{
  \ifdefined\recolor\draw[color=\recolor, line width=10pt]\else\draw[color=#1, line width=5pt]\fi #2;
}
\newcommand{\clipped}[3]{
\begin{scope}
  \newcommand{\recolor}{#1}
  \clip#3;
  #2
\end{scope}
}

\begin{scope}[transparency group]
% Background surfaces
\fill[generator-0-0-1-pos] (0,0) -- (8,0) -- (8,10) -- (0,10) -- (0,0);
\newcommand{\layer}[1]{
  \clipped{generator-0-0-1-pos}{#1}{(0,0) -- (8,0) -- (8,10) -- (0,10) -- (0,0)}
  #1
}

% Wire layers
\wire{generator-2-2-0-pos}{(6,7) .. controls (6,7.8) and (5.6,8) .. (5,8) .. controls (4.4,8) and (4,8.2) .. (4,9)};
\wire{generator-3-2-0-pos}{(6,1) .. controls (6,1.8) and (5,1.6) .. (5,2) .. controls (5,2.4) and (6,2.2) .. (6,3) .. controls (6,3.8) and (5.6,4) .. (5,4) .. controls (4.4,4) and (4,4.2) .. (4,5) .. controls (4,5.8) and (3.6,6) .. (3,6) .. controls (2.4,6) and (2,6.2) .. (2,7)};
\layer{
\wire{generator-3-2-0-pos}{(6,0) -- (6,1)(2,7) -- (2,10)};
\wire{generator-1-2-0-pos}{(2,0) -- (2,5) .. controls (2,5.8) and (2.4,6) .. (3,6) .. controls (3.6,6) and (4,6.2) .. (4,7) .. controls (4,7.8) and (4.4,8) .. (5,8) .. controls (5.6,8) and (6,8.2) .. (6,9) -- (6,10)};
\wire{generator-2-2-0-pos}{(4,0) -- (4,1) .. controls (4,1.8) and (5,1.6) .. (5,2) .. controls (5,2.4) and (4,2.2) .. (4,3) .. controls (4,3.8) and (4.4,4) .. (5,4) .. controls (5.6,4) and (6,4.2) .. (6,5) -- (6,7)(4,9) -- (4,10)};
}
\end{scope}
\end{tikzpicture}}\end{aligned}
\leadsto
\begin{aligned}\scalebox{0.3}{\begin{tikzpicture}
\definecolor{generator-3-2-0-pos}{RGB}{142, 68, 173}
\definecolor{generator-1-2-0-pos}{RGB}{192, 57, 43}
\definecolor{generator-2-2-0-pos}{RGB}{243, 156, 18}
\definecolor{generator-0-0-1-pos}{RGB}{246, 245, 244}

\newcommand{\wire}[2]{
  \ifdefined\recolor\draw[color=\recolor, line width=10pt]\else\draw[color=#1, line width=5pt]\fi #2;
}
\newcommand{\clipped}[3]{
\begin{scope}
  \newcommand{\recolor}{#1}
  \clip#3;
  #2
\end{scope}
}

\begin{scope}[transparency group]
% Background surfaces
\fill[generator-0-0-1-pos] (0,0) -- (8,0) -- (8,8) -- (0,8) -- (0,0);
\newcommand{\layer}[1]{
  \clipped{generator-0-0-1-pos}{#1}{(0,0) -- (8,0) -- (8,8) -- (0,8) -- (0,0)}
  #1
}

% Wire layers
\wire{generator-2-2-0-pos}{(6,5) .. controls (6,5.8) and (5.6,6) .. (5,6) .. controls (4.4,6) and (4,6.2) .. (4,7)};
\wire{generator-3-2-0-pos}{(6,1) .. controls (6,1.8) and (5.6,2) .. (5,2) .. controls (4.4,2) and (4,2.2) .. (4,3) .. controls (4,3.8) and (3.6,4) .. (3,4) .. controls (2.4,4) and (2,4.2) .. (2,5)};
\layer{
\wire{generator-3-2-0-pos}{(6,0) -- (6,1)(2,5) -- (2,8)};
\wire{generator-1-2-0-pos}{(2,0) -- (2,3) .. controls (2,3.8) and (2.4,4) .. (3,4) .. controls (3.6,4) and (4,4.2) .. (4,5) .. controls (4,5.8) and (4.4,6) .. (5,6) .. controls (5.6,6) and (6,6.2) .. (6,7) -- (6,8)};
\wire{generator-2-2-0-pos}{(4,0) -- (4,1) .. controls (4,1.8) and (4.4,2) .. (5,2) .. controls (5.6,2) and (6,2.2) .. (6,3) -- (6,5)(4,7) -- (4,8)};
}
\end{scope}
\end{tikzpicture}}\end{aligned}
\)
\caption{Reidemeister III  without anticontraction.}
\label{fig:reidemeister-move-v1}
\end{figure}

\begin{figure}
\centering
\(
\begin{aligned}\scalebox{0.3}{\begin{tikzpicture}
\definecolor{generator-3-2-0-pos}{RGB}{142, 68, 173}
\definecolor{generator-1-2-0-pos}{RGB}{192, 57, 43}
\definecolor{generator-2-2-0-pos}{RGB}{243, 156, 18}
\definecolor{generator-0-0-1-pos}{RGB}{246, 245, 244}

\newcommand{\wire}[2]{
  \ifdefined\recolor\draw[color=\recolor, line width=10pt]\else\draw[color=#1, line width=5pt]\fi #2;
}
\newcommand{\clipped}[3]{
\begin{scope}
  \newcommand{\recolor}{#1}
  \clip#3;
  #2
\end{scope}
}

\begin{scope}[transparency group]
% Background surfaces
\fill[generator-0-0-1-pos] (0,0) -- (8,0) -- (8,8) -- (0,8) -- (0,0);
\newcommand{\layer}[1]{
  \clipped{generator-0-0-1-pos}{#1}{(0,0) -- (8,0) -- (8,8) -- (0,8) -- (0,0)}
  #1
}

% Wire layers
\wire{generator-2-2-0-pos}{(4,1) .. controls (4,1.8) and (3.6,2) .. (3,2) .. controls (2.4,2) and (2,2.2) .. (2,3)};
\wire{generator-3-2-0-pos}{(6,3) .. controls (6,3.8) and (5.6,4) .. (5,4) .. controls (4.4,4) and (4,4.2) .. (4,5) .. controls (4,5.8) and (3.6,6) .. (3,6) .. controls (2.4,6) and (2,6.2) .. (2,7)};
\layer{
\wire{generator-3-2-0-pos}{(6,0) -- (6,3)(2,7) -- (2,8)};
\wire{generator-1-2-0-pos}{(2,0) -- (2,1) .. controls (2,1.8) and (2.4,2) .. (3,2) .. controls (3.6,2) and (4,2.2) .. (4,3) .. controls (4,3.8) and (4.4,4) .. (5,4) .. controls (5.6,4) and (6,4.2) .. (6,5) -- (6,8)};
\wire{generator-2-2-0-pos}{(4,0) -- (4,1)(2,3) -- (2,5) .. controls (2,5.8) and (2.4,6) .. (3,6) .. controls (3.6,6) and (4,6.2) .. (4,7) -- (4,8)};
}
\end{scope}
\end{tikzpicture}}\end{aligned}
\leadsto
\begin{aligned}\scalebox{0.3}{\begin{tikzpicture}
\definecolor{generator-3-2-0-pos}{RGB}{142, 68, 173}
\definecolor{generator-1-2-0-pos}{RGB}{192, 57, 43}
\definecolor{generator-2-2-0-pos}{RGB}{243, 156, 18}
\definecolor{generator-0-0-1-pos}{RGB}{246, 245, 244}

\newcommand{\wire}[2]{
  \ifdefined\recolor\draw[color=\recolor, line width=10pt]\else\draw[color=#1, line width=5pt]\fi #2;
}
\newcommand{\clipped}[3]{
\begin{scope}
  \newcommand{\recolor}{#1}
  \clip#3;
  #2
\end{scope}
}

\begin{scope}[transparency group]
% Background surfaces
\fill[generator-0-0-1-pos] (0,0) -- (8,0) -- (8,10) -- (0,10) -- (0,0);
\newcommand{\layer}[1]{
  \clipped{generator-0-0-1-pos}{#1}{(0,0) -- (8,0) -- (8,10) -- (0,10) -- (0,0)}
  #1
}

% Wire layers
\wire{generator-2-2-0-pos}{(4,3) .. controls (4,3.8) and (3.6,4) .. (3,4) .. controls (2.4,4) and (2,4.2) .. (2,5)};
\wire{generator-3-2-0-pos}{(6,1) .. controls (6,1.8) and (5,1.6) .. (5,2) .. controls (5,2.4) and (6,2.2) .. (6,3)(6,5) .. controls (6,5.8) and (5.6,6) .. (5,6) .. controls (4.4,6) and (4,6.2) .. (4,7) .. controls (4,7.8) and (3.6,8) .. (3,8) .. controls (2.4,8) and (2,8.2) .. (2,9)};
\layer{
\wire{generator-3-2-0-pos}{(6,0) -- (6,1)(6,3) -- (6,5)(2,9) -- (2,10)};
\wire{generator-1-2-0-pos}{(2,0) -- (2,3) .. controls (2,3.8) and (2.4,4) .. (3,4) .. controls (3.6,4) and (4,4.2) .. (4,5) .. controls (4,5.8) and (4.4,6) .. (5,6) .. controls (5.6,6) and (6,6.2) .. (6,7) -- (6,10)};
\wire{generator-2-2-0-pos}{(4,0) -- (4,1) .. controls (4,1.8) and (5,1.6) .. (5,2) .. controls (5,2.4) and (4,2.2) .. (4,3)(2,5) -- (2,7) .. controls (2,7.8) and (2.4,8) .. (3,8) .. controls (3.6,8) and (4,8.2) .. (4,9) -- (4,10)};
}
\end{scope}
\end{tikzpicture}}\end{aligned}
\leadsto
\begin{aligned}\scalebox{0.3}{\begin{tikzpicture}
\definecolor{generator-3-2-0-pos}{RGB}{142, 68, 173}
\definecolor{generator-1-2-0-pos}{RGB}{192, 57, 43}
\definecolor{generator-2-2-0-pos}{RGB}{243, 156, 18}
\definecolor{generator-0-0-1-pos}{RGB}{246, 245, 244}

\newcommand{\wire}[2]{
  \ifdefined\recolor\draw[color=\recolor, line width=10pt]\else\draw[color=#1, line width=5pt]\fi #2;
}
\newcommand{\clipped}[3]{
\begin{scope}
  \newcommand{\recolor}{#1}
  \clip#3;
  #2
\end{scope}
}

\begin{scope}[transparency group]
% Background surfaces
\fill[generator-0-0-1-pos] (0,0) -- (8,0) -- (8,8) -- (0,8) -- (0,0);
\newcommand{\layer}[1]{
  \clipped{generator-0-0-1-pos}{#1}{(0,0) -- (8,0) -- (8,8) -- (0,8) -- (0,0)}
  #1
}

% Wire layers
\wire{generator-3-2-0-pos}{(6,1) .. controls (6,1.8) and (4,1.6) .. (4,2) .. controls (4,2.4) and (6,2.2) .. (6,3)};
\layer{
\wire{generator-2-2-0-pos}{(4,1) -- (4,2) .. controls (4,2.4) and (2,2.2) .. (2,3)};
\wire{generator-3-2-0-pos}{(6,3) .. controls (6,3.8) and (5.6,4) .. (5,4) .. controls (4.4,4) and (4,4.2) .. (4,5) .. controls (4,5.8) and (3.6,6) .. (3,6) .. controls (2.4,6) and (2,6.2) .. (2,7)};
}
\layer{
\wire{generator-3-2-0-pos}{(6,0) -- (6,1)(2,7) -- (2,8)};
\wire{generator-1-2-0-pos}{(2,0) -- (2,1) .. controls (2,1.8) and (4,1.6) .. (4,2) -- (4,3) .. controls (4,3.8) and (4.4,4) .. (5,4) .. controls (5.6,4) and (6,4.2) .. (6,5) -- (6,8)};
\wire{generator-2-2-0-pos}{(4,0) -- (4,1)(2,3) -- (2,5) .. controls (2,5.8) and (2.4,6) .. (3,6) .. controls (3.6,6) and (4,6.2) .. (4,7) -- (4,8)};
}
\end{scope}
\end{tikzpicture}}\end{aligned}
\leadsto
\begin{aligned}\scalebox{0.3}{\begin{tikzpicture}
\definecolor{generator-3-2-0-pos}{RGB}{142, 68, 173}
\definecolor{generator-1-2-0-pos}{RGB}{192, 57, 43}
\definecolor{generator-2-2-0-pos}{RGB}{243, 156, 18}
\definecolor{generator-0-0-1-pos}{RGB}{246, 245, 244}

\newcommand{\wire}[2]{
  \ifdefined\recolor\draw[color=\recolor, line width=10pt]\else\draw[color=#1, line width=5pt]\fi #2;
}
\newcommand{\clipped}[3]{
\begin{scope}
  \newcommand{\recolor}{#1}
  \clip#3;
  #2
\end{scope}
}

\begin{scope}[transparency group]
% Background surfaces
\fill[generator-0-0-1-pos] (0,0) -- (8,0) -- (8,6) -- (0,6) -- (0,0);
\newcommand{\layer}[1]{
  \clipped{generator-0-0-1-pos}{#1}{(0,0) -- (8,0) -- (8,6) -- (0,6) -- (0,0)}
  #1
}

% Wire layers
\wire{generator-3-2-0-pos}{(6,1) .. controls (6,1.8) and (5.2,2) .. (4,2) -- (4,3)};
\layer{
\wire{generator-2-2-0-pos}{(4,1) -- (4,2) .. controls (2.8,2) and (2,2.2) .. (2,3)};
\wire{generator-3-2-0-pos}{(4,3) .. controls (4,3.8) and (3.6,4) .. (3,4) .. controls (2.4,4) and (2,4.2) .. (2,5)};
}
\layer{
\wire{generator-3-2-0-pos}{(6,0) -- (6,1)(2,5) -- (2,6)};
\wire{generator-1-2-0-pos}{(2,0) -- (2,1) .. controls (2,1.8) and (2.8,2) .. (4,2) .. controls (5.2,2) and (6,2.2) .. (6,3) -- (6,6)};
\wire{generator-2-2-0-pos}{(4,0) -- (4,1)(2,3) .. controls (2,3.8) and (2.4,4) .. (3,4) .. controls (3.6,4) and (4,4.2) .. (4,5) -- (4,6)};
}
\end{scope}
\end{tikzpicture}}\end{aligned}
\leadsto
\begin{aligned}\scalebox{0.3}{\begin{tikzpicture}
\definecolor{generator-3-2-0-pos}{RGB}{142, 68, 173}
\definecolor{generator-1-2-0-pos}{RGB}{192, 57, 43}
\definecolor{generator-2-2-0-pos}{RGB}{243, 156, 18}
\definecolor{generator-0-0-1-pos}{RGB}{246, 245, 244}

\newcommand{\wire}[2]{
  \ifdefined\recolor\draw[color=\recolor, line width=10pt]\else\draw[color=#1, line width=5pt]\fi #2;
}
\newcommand{\clipped}[3]{
\begin{scope}
  \newcommand{\recolor}{#1}
  \clip#3;
  #2
\end{scope}
}

\begin{scope}[transparency group]
% Background surfaces
\fill[generator-0-0-1-pos] (0,0) -- (8,0) -- (8,4) -- (0,4) -- (0,0);
\newcommand{\layer}[1]{
  \clipped{generator-0-0-1-pos}{#1}{(0,0) -- (8,0) -- (8,4) -- (0,4) -- (0,0)}
  #1
}

% Wire layers
\wire{generator-3-2-0-pos}{(6,1) .. controls (6,1.8) and (5.2,2) .. (4,2) .. controls (2.8,2) and (2,2.2) .. (2,3)};
\layer{
\wire{generator-2-2-0-pos}{(4,1) -- (4,3)};
}
\layer{
\wire{generator-3-2-0-pos}{(6,0) -- (6,1)(2,3) -- (2,4)};
\wire{generator-1-2-0-pos}{(2,0) -- (2,1) .. controls (2,1.8) and (2.8,2) .. (4,2) .. controls (5.2,2) and (6,2.2) .. (6,3) -- (6,4)};
\wire{generator-2-2-0-pos}{(4,0) -- (4,1)(4,3) -- (4,4)};
}
\end{scope}
\end{tikzpicture}}\end{aligned}
\leadsto
\begin{aligned}\scalebox{0.3}{\begin{tikzpicture}
\definecolor{generator-3-2-0-pos}{RGB}{142, 68, 173}
\definecolor{generator-1-2-0-pos}{RGB}{192, 57, 43}
\definecolor{generator-2-2-0-pos}{RGB}{243, 156, 18}
\definecolor{generator-0-0-1-pos}{RGB}{246, 245, 244}

\newcommand{\wire}[2]{
  \ifdefined\recolor\draw[color=\recolor, line width=10pt]\else\draw[color=#1, line width=5pt]\fi #2;
}
\newcommand{\clipped}[3]{
\begin{scope}
  \newcommand{\recolor}{#1}
  \clip#3;
  #2
\end{scope}
}

\begin{scope}[transparency group]
% Background surfaces
\fill[generator-0-0-1-pos] (0,0) -- (8,0) -- (8,6) -- (0,6) -- (0,0);
\newcommand{\layer}[1]{
  \clipped{generator-0-0-1-pos}{#1}{(0,0) -- (8,0) -- (8,6) -- (0,6) -- (0,0)}
  #1
}

% Wire layers
\wire{generator-3-2-0-pos}{(4,3) -- (4,4) .. controls (2.8,4) and (2,4.2) .. (2,5)};
\layer{
\wire{generator-3-2-0-pos}{(6,1) .. controls (6,1.8) and (5.6,2) .. (5,2) .. controls (4.4,2) and (4,2.2) .. (4,3)};
\wire{generator-2-2-0-pos}{(6,3) .. controls (6,3.8) and (5.2,4) .. (4,4) -- (4,5)};
}
\layer{
\wire{generator-3-2-0-pos}{(6,0) -- (6,1)(2,5) -- (2,6)};
\wire{generator-1-2-0-pos}{(2,0) -- (2,3) .. controls (2,3.8) and (2.8,4) .. (4,4) .. controls (5.2,4) and (6,4.2) .. (6,5) -- (6,6)};
\wire{generator-2-2-0-pos}{(4,0) -- (4,1) .. controls (4,1.8) and (4.4,2) .. (5,2) .. controls (5.6,2) and (6,2.2) .. (6,3)(4,5) -- (4,6)};
}
\end{scope}
\end{tikzpicture}}\end{aligned}
\leadsto
\begin{aligned}\scalebox{0.3}{\begin{tikzpicture}
\definecolor{generator-3-2-0-pos}{RGB}{142, 68, 173}
\definecolor{generator-1-2-0-pos}{RGB}{192, 57, 43}
\definecolor{generator-2-2-0-pos}{RGB}{243, 156, 18}
\definecolor{generator-0-0-1-pos}{RGB}{246, 245, 244}

\newcommand{\wire}[2]{
  \ifdefined\recolor\draw[color=\recolor, line width=10pt]\else\draw[color=#1, line width=5pt]\fi #2;
}
\newcommand{\clipped}[3]{
\begin{scope}
  \newcommand{\recolor}{#1}
  \clip#3;
  #2
\end{scope}
}

\begin{scope}[transparency group]
% Background surfaces
\fill[generator-0-0-1-pos] (0,0) -- (8,0) -- (8,8) -- (0,8) -- (0,0);
\newcommand{\layer}[1]{
  \clipped{generator-0-0-1-pos}{#1}{(0,0) -- (8,0) -- (8,8) -- (0,8) -- (0,0)}
  #1
}

% Wire layers
\wire{generator-3-2-0-pos}{(2,5) .. controls (2,5.8) and (4,5.6) .. (4,6) .. controls (4,6.4) and (2,6.2) .. (2,7)};
\layer{
\wire{generator-3-2-0-pos}{(6,1) .. controls (6,1.8) and (5.6,2) .. (5,2) .. controls (4.4,2) and (4,2.2) .. (4,3) .. controls (4,3.8) and (3.6,4) .. (3,4) .. controls (2.4,4) and (2,4.2) .. (2,5)};
\wire{generator-2-2-0-pos}{(6,5) .. controls (6,5.8) and (4,5.6) .. (4,6) -- (4,7)};
}
\layer{
\wire{generator-3-2-0-pos}{(6,0) -- (6,1)(2,7) -- (2,8)};
\wire{generator-1-2-0-pos}{(2,0) -- (2,3) .. controls (2,3.8) and (2.4,4) .. (3,4) .. controls (3.6,4) and (4,4.2) .. (4,5) -- (4,6) .. controls (4,6.4) and (6,6.2) .. (6,7) -- (6,8)};
\wire{generator-2-2-0-pos}{(4,0) -- (4,1) .. controls (4,1.8) and (4.4,2) .. (5,2) .. controls (5.6,2) and (6,2.2) .. (6,3) -- (6,5)(4,7) -- (4,8)};
}
\end{scope}
\end{tikzpicture}}\end{aligned}
\leadsto
\begin{aligned}\scalebox{0.3}{\begin{tikzpicture}
\definecolor{generator-3-2-0-pos}{RGB}{142, 68, 173}
\definecolor{generator-1-2-0-pos}{RGB}{192, 57, 43}
\definecolor{generator-2-2-0-pos}{RGB}{243, 156, 18}
\definecolor{generator-0-0-1-pos}{RGB}{246, 245, 244}

\newcommand{\wire}[2]{
  \ifdefined\recolor\draw[color=\recolor, line width=10pt]\else\draw[color=#1, line width=5pt]\fi #2;
}
\newcommand{\clipped}[3]{
\begin{scope}
  \newcommand{\recolor}{#1}
  \clip#3;
  #2
\end{scope}
}

\begin{scope}[transparency group]
% Background surfaces
\fill[generator-0-0-1-pos] (0,0) -- (8,0) -- (8,8) -- (0,8) -- (0,0);
\newcommand{\layer}[1]{
  \clipped{generator-0-0-1-pos}{#1}{(0,0) -- (8,0) -- (8,8) -- (0,8) -- (0,0)}
  #1
}

% Wire layers
\wire{generator-2-2-0-pos}{(6,5) .. controls (6,5.8) and (5.6,6) .. (5,6) .. controls (4.4,6) and (4,6.2) .. (4,7)};
\wire{generator-3-2-0-pos}{(6,1) .. controls (6,1.8) and (5.6,2) .. (5,2) .. controls (4.4,2) and (4,2.2) .. (4,3) .. controls (4,3.8) and (3.6,4) .. (3,4) .. controls (2.4,4) and (2,4.2) .. (2,5)};
\layer{
\wire{generator-3-2-0-pos}{(6,0) -- (6,1)(2,5) -- (2,8)};
\wire{generator-1-2-0-pos}{(2,0) -- (2,3) .. controls (2,3.8) and (2.4,4) .. (3,4) .. controls (3.6,4) and (4,4.2) .. (4,5) .. controls (4,5.8) and (4.4,6) .. (5,6) .. controls (5.6,6) and (6,6.2) .. (6,7) -- (6,8)};
\wire{generator-2-2-0-pos}{(4,0) -- (4,1) .. controls (4,1.8) and (4.4,2) .. (5,2) .. controls (5.6,2) and (6,2.2) .. (6,3) -- (6,5)(4,7) -- (4,8)};
}
\end{scope}
\end{tikzpicture}}\end{aligned}
\)
\caption{Reidemeister III  with anticontraction.}
\label{fig:reidemeister-move-v2}
\end{figure}

The authors have implemented anticontraction, and the feature is now available within a pre-release version of the proof assistant hosted at
\begin{center}
\url{https://beta.homotopy.io}
\end{center}
To use anticontraction, generators need to be marked as `oriented' in the signature, via the settings dialog for each generator; this disables certain geometrical  features  not currently compatible with anticontraction. To generate the moves of \Cref{fig:naturality-move-v2}, create the first image; then drag the vertex \emph{up} to create the second image; then drag the vertex \emph{right} to create the third image.

\subsection{Related work}

We are not aware of any work on anticolimits in the literature.
The closest relevant notion may be \emph{pushout complements} which play an important role in double pushout graph rewriting.
These resemble our notion of antipushouts, in that they both take as input one side of a pushout square and produce the other side:
\begin{itemize}
\item pushout complements take a composable pair of morphisms and produce the other composable pair;
\item antipushouts take two morphisms with a common codomain and produce two morphisms with a common domain.
\end{itemize}
We illustrate this as follows, where the solid arrows are given as input to the construction, and the dashed arrows are produced:
\[
\begin{aligned}
\begin{tikzcd}
\bullet \ar[dr, phantom, "\ulcorner", very near end] \dar[dashed] \rar & \bullet \dar \\
\bullet \rar[dashed] & \bullet
\end{tikzcd}
\end{aligned}
\!\!
\begin{array}{l}
\text{pushout} \\
\text{complement}
\end{array}
\hspace{.5cm}
\begin{aligned}
\begin{tikzcd}
\bullet \ar[dr, phantom, "\ulcorner", very near end] \dar[dashed] \rar[dashed] & \bullet \dar \\
\bullet \rar & \bullet
\end{tikzcd}
\end{aligned}
\!\!
\begin{array}{l}
\text{antipushout}
\end{array}
\]
The work on anticontraction builds on the existing body of work on associative $n$-categories and the proof assistant \textsc{homotopy.io}.
The theory of associative $n$\-categories was originally developed by Dorn, Douglas, and Vicary, and described in Dorn's PhD thesis~\cite{Dor18}.
We follow the development presented by Reutter and Vicary at LICS 2019~\cite{RV19}.
Further aspects of the proof assistant have been described by Corbyn, Heidemann, Hu, Sarti, Tataru, and Vicary~\cite{HRV22,Hei23,SV23,TV23}.

\subsection{Notation}

We adopt the following notation throughout the paper:
\begin{itemize}
\item $\ord n$ is the totally ordered set $\set{0, \dots, n - 1}$ for $n \in \mathbb{N}$.
\item $\Ord$ is the category of finite ordinals and monotone maps.
\item $\Int \subset \Ord$ is the category of finite intervals (i.e.\ finite non-empty ordinals) and monotone maps that preserve top and bottom.
\item $\Set$ is the category of sets and functions.
\item $\Pos$ is the category of posets and monotone maps.
\item $\Pre$ is the category of preorders and monotone maps.
\item $\Cat$ is the category of (small) categories and functors.
\item Given a poset $\pos J$ and an object $i \in \pos J$, define the following:
\[
\upper i \coloneqq \set{j \in \pos J \mid i \leq j} \qquad \Upper i \coloneqq \max(\upper i)
\]
\item $[\cat C, \cat D]$ is the functor category between categories $\cat C$ and $\cat D$.
\end{itemize}

\subsection{Acknowledgements}

We are grateful to Thibaut Benjamin, Lukas Heidemann, Nick Hu, Ioannis Markakis, Wilf Offord, and Chiara Sarti for useful discussions.

\section{Anticolimits} \label{sec:anticolimits}

Let $\pos J$ be a poset, seen as a thin category.
Given a diagram $F : \pos J \to \cat C$, a cocone over $F$ can be equivalently defined as either one of:
\begin{enumerate}
\item a natural transformation $\kappa : F \Rightarrow \Delta_c$, or
\item a sink $\kappa : F|_{\max \pos J} \Rightarrow \Delta_c$ such that
\[
\begin{tikzcd}
& c & \\
F j \urar["\kappa_j"] && F k \ular["\kappa_k"'] \\
& F i \ular \urar &
\end{tikzcd}
\]
commutes for every $i \in \pos J$ and $j, k \in \Upper i$.
\end{enumerate}
While (1) is the standard definition, we will use (2) as our definition of cocone, because it requires the least amount of data.
A colimit is a cocone which is universal, i.e.\ any cocone factors uniquely through it.
Now the problem we are interested in is the following: given a sink $\kappa$ over $\max \pos J$, find all diagrams over $\pos J$ for which $\kappa$ is a colimit.

\begin{definition}
Let $\pos J$ be a poset and consider a sink
\[
\kappa : X \Rightarrow \Delta_c : \max \pos J \to \cat C
\]
We define a \emph{$\pos J$-anticocone} of $\kappa$ to be a diagram $A : \pos J \to \cat C$ which extends $X$ such that $\kappa$ is a cocone over it.
We say that an anticocone is an \emph{anticolimit} if furthermore it makes $\kappa$ into a colimit.
\end{definition}

For example, consider the following poset $\pos J$ (i.e.\ a span):
\[
\begin{tikzcd}
\bullet && \bullet \\
& \bullet \ular \urar &
\end{tikzcd}
\]
This has two maximal elements, so a sink over $\max \pos J$ is a cospan
\[
\begin{tikzcd}
& C & \\
A \urar["f"] && B \ular["g"']
\end{tikzcd}
\]
The $\pos J$-anticocones are spans $(p : D \to A, \, q : D \to B)$ such that $f \circ p = g \circ q$,
and the $\pos J$-anticolimits, which we call \emph{antipushouts}, are spans making the following diagram into a pushout square:
\[
\begin{tikzcd}
& C \ar[dd, phantom, "\llcorner" rotate=45, very near start] & \\
A \urar["f"] && B \ular["g"'] \\
& D \ular[dashed, "p"] \urar[dashed, "q"'] & 
\end{tikzcd}
\]

\begin{example}
Consider the cospan in the category $\Ord$:
\[
\begin{tikzcd}
& \ord 1 & \\
\ord 2 \urar && \ord 3 \ular
\end{tikzcd}
\]
Then the following spans are both antipushouts of the cospan:
\[
\begin{tikzcd}
& \ord 1 \ar[dd, phantom, "\llcorner" rotate=45, very near start] & \\
\ord 2 \urar && \ord 3 \ular \\
& \ord 4 \ular[dashed, "0001"] \urar[dashed, "0122"'] &
\end{tikzcd}
\qquad
\begin{tikzcd}
& \ord 1 \ar[dd, phantom, "\llcorner" rotate=45, very near start] & \\
\ord 2 \urar && \ord 3 \ular \\
& \ord 4 \ular[dashed, "0111"] \urar[dashed, "0012"'] &
\end{tikzcd}
\]
Here, we write $a_1 \dots a_n : \ord n \to \ord m$ for the map that sends $i$ to $a_i$.
\end{example}

In general, anticolimits may not exist, and even if they do, they are not necessarily unique.
They do {not} satisfy a universal property.
There is an easy way to test if a sink cannot have anticolimits.

\begin{proposition}
\label{prop:jointly-epic}
If $\kappa$ has an anticolimit, then it is jointly epic.
\end{proposition}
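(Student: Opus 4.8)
The plan is to reduce the statement to the standard fact that a colimit cocone is jointly epic, being careful that our cocones are presented in the form~(2), i.e.\ as sinks over $\max \pos J$. Suppose $A : \pos J \to \cat C$ is an anticolimit of $\kappa : X \Rightarrow \Delta_c : \max\pos J \to \cat C$, so that $\kappa$ is a colimit cocone over $A$. Let $u, v : c \to d$ be morphisms with $u \circ \kappa_j = v \circ \kappa_j$ for every $j \in \max \pos J$; we must show $u = v$.

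First I would observe that post-composition with $u$ turns $\kappa$ into another cocone over $A$: defining $(u \cdot \kappa)_j \coloneqq u \circ \kappa_j$ for $j \in \max\pos J$, the required squares
\[
\begin{tikzcd}
& d & \\
A j \urar["u\circ\kappa_j"] && A k \ular["u\circ\kappa_k"'] \\
& A i \ular \urar &
\end{tikzcd}
\]
commute for all $i \in \pos J$ and $j, k \in \Upper i$, simply by applying $u$ to the corresponding squares for $\kappa$. The same holds for $v$, giving a cocone $v \cdot \kappa$. By hypothesis $u \cdot \kappa$ and $v \cdot \kappa$ have the same components on every maximal element, hence are literally the same sink $X \Rightarrow \Delta_d$, and therefore the same cocone over $A$ in the sense of~(2).

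Now I would invoke the universal property of the colimit $\kappa$: there is a \emph{unique} morphism $c \to d$ whose post-composition with $\kappa$ yields this common cocone. Both $u$ and $v$ are such morphisms, so $u = v$, as required. This shows $\kappa$ is jointly epic.

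I do not expect any real obstacle here; the only thing to be careful about is the translation between the two descriptions of cocones — in particular checking that $u \cdot \kappa$ and $v \cdot \kappa$ genuinely are cocones in sense~(2), which is immediate since applying $u$ (resp.\ $v$) preserves commutativity of the defining triangles, and that the universal property of a colimit in the form~(2) still delivers \emph{uniqueness} of the comparison map, which is part of its statement.
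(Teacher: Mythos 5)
Your proof is correct and takes essentially the same approach as the paper: both arguments ultimately rest on the uniqueness clause in the universal property of the colimit. The paper simply cites the standard fact that colimit inclusions are jointly epic and then notes that joint epicness over all of $\pos J$ is equivalent to joint epicness over $\max \pos J$ (since non-maximal inclusions factor through maximal ones), whereas you inline that standard argument directly in the sink-over-$\max\pos J$ presentation; the content is the same.
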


\begin{proof}
Colimit inclusions are guaranteed to be jointly epic.
Also, $\set{\kappa_i}_{i \in \pos J}$ is jointly epic iff $\set{\kappa_i}_{i \in \max \pos J}$ is jointly epic, because each $\kappa_i$ for $i$ non-maximal factorises through some $\kappa_j$ for $j$ maximal.
\end{proof}

Anticocones and anticolimits form categories.
First, we need to define the category of extensions of one functor along another.

\begin{definition}
An \emph{extension} of a functor $F : \cat C \to \cat D$ along another functor $G : \cat C \to \cat E$ is a functor $H : \cat E \to \cat D$ such that $H \circ G = F$:
\[
\begin{tikzcd}
\cat C \dar["G"'] \rar["F"] & \cat D \\
\cat E \urar[dashed, "H"'] &
\end{tikzcd}
\]
The category $\Ext_G(F)$ of extensions of $F$ along $G$ is the subcategory of $[\cat E, \cat D]$ obtained by the following pullback in $\Cat$:
\[
\begin{tikzcd}
\Ext_G(F) \arrow[dr, phantom, "\lrcorner", very near start] \dar \rar[tail] & {[\cat E, \cat D]} \dar["(-) \circ G"] \\
\ord 1 \rar["F"', tail] & {[\cat C, \cat D]}
\end{tikzcd}
\]
In particular, the morphisms are given by natural transformations $\eta : H_1 \to H_2$ such that $\eta \circ G = \id_F$.
If $i : \cat C \hookrightarrow \cat E$ is a subcategory inclusion, we write $\Ext_\cat E(F)$ instead of $\Ext_i(F)$; the morphisms here are natural transformations whose components on $\cat C$ are trivial.
\end{definition}

\begin{definition}
The categories $\Acc_\pos J(\kappa)$ of anticocones and $\Acl_\pos J(\kappa)$ of anticolimits are defined to be full subcategories of $\Ext_\pos J(X)$:
\[
\Acl_\pos J(\kappa) \hookrightarrow \Acc_\pos J(\kappa) \hookrightarrow \Ext_\pos J(X)
\]
\end{definition}

\paragraph{Sieves and cosieves}
Recall that a \emph{sieve} in a category $\cat C$ is a full subcategory of $\cat C$ closed under precomposition with morphisms in $\cat C$, according to~\cite[Definition 6.2.2.1]{Lur09}.
Dually, a \emph{cosieve} is a full subcategory closed under postcomposition with morphisms in $\cat C$.

We have the following results about the categories of anticocones and anticolimits.
The full proofs are included in the appendix.

\begin{lemma}
\label{lma:acc-sieve}
$\Acc_\pos J(\kappa)$ is a sieve in $\Ext_\pos J(X)$.
\end{lemma}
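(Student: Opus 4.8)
The plan is to unwind the definition of a sieve and reduce the claim to a single closure property. Since $\Acc_\pos J(\kappa)$ is by construction a full subcategory of $\Ext_\pos J(X)$, the only thing left to check is that it is closed under precomposition: whenever $A_2 \in \Acc_\pos J(\kappa)$ and $\eta \colon A_1 \to A_2$ is a morphism of $\Ext_\pos J(X)$, we must show that $A_1 \in \Acc_\pos J(\kappa)$, i.e.\ that $\kappa$ is again a cocone over $A_1$. Using definition~(2) of a cocone, this amounts to establishing the family of equations $\kappa_j \circ A_1(i \le j) = \kappa_k \circ A_1(i \le k)$ for all $i \in \pos J$ and $j, k \in \Upper i$, given that the corresponding equations hold for $A_2$.

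First I would record the two facts that make $\eta$ usable. Both $A_1$ and $A_2$ are objects of $\Ext_\pos J(X)$, so they restrict to $X$ on $\max \pos J$; and $\eta$, being a morphism of $\Ext_\pos J(X)$, has identity component $\eta_j = \id_{Xj}$ at every maximal $j$. Since $\Upper i = \max(\upper i) \subseteq \max \pos J$, this applies to each $j, k \in \Upper i$. The key step is then to rewrite $A_1$ in terms of $A_2$: naturality of $\eta$ on the morphism $i \le j$ gives $A_2(i \le j) \circ \eta_i = \eta_j \circ A_1(i \le j) = A_1(i \le j)$, and likewise $A_1(i \le k) = A_2(i \le k) \circ \eta_i$. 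Substituting these and then invoking the cocone condition for $A_2$ yields $\kappa_j \circ A_1(i \le j) = \kappa_j \circ A_2(i \le j) \circ \eta_i = \kappa_k \circ A_2(i \le k) \circ \eta_i = \kappa_k \circ A_1(i \le k)$, which is exactly what is required.

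I do not expect a genuine obstacle here; the argument is essentially just naturality together with the observation that the components of $\eta$ at maximal objects are trivial. The only points needing care are keeping in mind that definition~(2) quantifies over \emph{all} $i \in \pos J$ rather than only maximal ones, and noticing that the ``extends $X$'' clause in the definition of an anticocone is precisely what forces the naturality square above to collapse into the needed factorisation. It is also worth remarking, to forestall the obvious follow-up question, that $\Acl_\pos J(\kappa)$ is \emph{not} a sieve in general: universality of $\kappa$ is not inherited along an arbitrary morphism of $\Ext_\pos J(X)$, which is why the colimit condition must be treated by separate arguments later on.
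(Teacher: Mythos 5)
Your argument is correct and is essentially the paper's own proof: the paper establishes the same commuting diagram using naturality of $\eta$ together with the triviality of its components on $\max \pos J$, which is exactly your equational computation $\kappa_j \circ A_1(i \le j) = \kappa_j \circ A_2(i \le j) \circ \eta_i = \kappa_k \circ A_2(i \le k) \circ \eta_i = \kappa_k \circ A_1(i \le k)$. Your closing remark that $\Acl_\pos J(\kappa)$ is not a sieve is also consistent with the paper, which instead shows it is a cosieve in $\Acc_\pos J(\kappa)$.
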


\begin{proof}
Let $\eta : A \to B$ be a morphism in $\Ext_\pos J(X)$ and suppose that $B$ is an anticocone of $\kappa$.
Then the following commutes for every $i \in \pos J$ and $j, k \in \Upper i$ since $\kappa$ is a cocone over $B$ and by naturality of $\eta$:
\[
\begin{tikzcd}
& c & \\
B j \urar["\kappa_j"] && B k \ular["\kappa_k"'] \\
& B i \ular \urar & \\
A j \ar[uu, equal] && A k \ar[uu, equal] \\
& A i \ular \urar \ar[uu, "\eta_i"'] &
\end{tikzcd}
\]
Therefore $\kappa$ is a cocone over $A$, which implies that $A \in \Acc_\pos J(\kappa)$.
\end{proof}

\begin{lemma}
\label{lma:acl-cosieve}
$\Acl_\pos J(\kappa)$ is a cosieve in $\Acc_\pos J(X)$.
\end{lemma}

\begin{proof}
Let $\eta : A \to B$ be a morphism in $\Acc_\pos J(X)$ and suppose that $A$ is an anticolimit of $\kappa$.
Now let $\lambda : X \Rightarrow \Delta_d$ be a cocone over $B$.
Then the following commutes for every $i \in \pos J$ and $j, k \in \Upper i$:
\[
\begin{tikzcd}
& d & \\
B j \urar["\lambda_j"] && B k \ular["\lambda_k"'] \\
& B i \ular \urar & \\
A j \ar[uu, equal] && A k \ar[uu, equal] \\
& A i \ular \urar \ar[uu, "\eta_i"'] &
\end{tikzcd}
\]
Hence $\lambda$ is also a cocone over $A$, so there is universal morphism $u : c \to d$ such that $\lambda = \Delta_u \circ \kappa$.
Therefore $\kappa$ is a colimit of $B$.
\end{proof}

\begin{lemma}
\label{lma:acl-pointwise-epi}
If $\eta : A \to B$ is a pointwise epimorphism in $\Ext_\pos J(X)$ and $B$ is an anticolimit of $\kappa$, then $A$ is also an anticolimit of $\kappa$.
\end{lemma}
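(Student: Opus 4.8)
The plan is to prove the claim in two stages: first that $A$ is an anticocone of $\kappa$, and then that the colimit universal property of $\kappa$ transfers from $B$ to $A$ by pushing cocones along $\eta$.

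For the first stage, $\eta : A \to B$ is a morphism of $\Ext_\pos J(X)$ by hypothesis, and $B$, being an anticolimit, is in particular an anticocone of $\kappa$. Since $\Acc_\pos J(\kappa)$ is a sieve in $\Ext_\pos J(X)$ by \Cref{lma:acc-sieve}, we conclude $A \in \Acc_\pos J(\kappa)$, i.e.\ $\kappa$ is a cocone over $A$.

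For the second stage, the key observation is that $\Upper i \subseteq \max \pos J$ for every $i \in \pos J$: if $j \in \max(\upper i)$ and $j \leq k$ in $\pos J$, then $k \in \upper i$ and maximality forces $j = k$. Because $\eta$ is a morphism of extensions of $X$, its components over $\max \pos J$ are identities, so in particular $\eta_j = \id$ for every $j \in \Upper i$. Now fix any cocone $\lambda : X \Rightarrow \Delta_d$ over $A$ and write $A_{ij} : A i \to A j$ and $B_{ij} : B i \to B j$ for the diagram actions on $i \leq j$. For $i \in \pos J$ and $j, k \in \Upper i$, naturality of $\eta$ gives $B_{ij} \circ \eta_i = \eta_j \circ A_{ij} = A_{ij}$ and likewise $B_{ik} \circ \eta_i = A_{ik}$, so
\[
\lambda_j \circ B_{ij} \circ \eta_i = \lambda_j \circ A_{ij} = \lambda_k \circ A_{ik} = \lambda_k \circ B_{ik} \circ \eta_i ,
\]
where the middle equality is the cocone condition for $\lambda$ over $A$. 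As $\eta_i$ is an epimorphism it cancels, yielding $\lambda_j \circ B_{ij} = \lambda_k \circ B_{ik}$; hence $\lambda$ is also a cocone over $B$.

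Finally, since $B$ is an anticolimit, $\kappa$ is a colimit of $B$, so there is a unique $u : c \to d$ with $\lambda = \Delta_u \circ \kappa$. The factorisation equation $\lambda = \Delta_u \circ \kappa$ refers only to the components over $\max \pos J$, which coincide for $A$ and $B$, so this same $u$ is the unique factorisation of $\lambda$ through $\kappa$ viewed as a cocone over $A$. Therefore $\kappa$ is a colimit of $A$, and $A$ is an anticolimit of $\kappa$. The only genuine subtlety is noticing that $\Upper i$ consists of maximal elements, which is exactly what makes $\eta$ trivial there and allows the epimorphisms to be cancelled; the remainder is a routine diagram chase, structurally parallel to the proof of \Cref{lma:acl-cosieve}.
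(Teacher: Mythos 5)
Your proof is correct and follows essentially the same route as the paper's: establish that $\kappa$ is a cocone over $A$ via the sieve property of \Cref{lma:acc-sieve}, then transfer any cocone $\lambda$ over $A$ to a cocone over $B$ by cancelling the epimorphism $\eta_i$, and conclude via the universal property of $\kappa$ over $B$. The only difference is that you spell out details the paper leaves implicit in its diagram (that $\Upper i \subseteq \max \pos J$ forces $\eta_j = \id$ there, and that the factorisation $\lambda = \Delta_u \circ \kappa$ is an equation of sinks over $\max \pos J$, so uniqueness carries over), which is a welcome clarification but not a different argument.
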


\begin{proof}
We already have that $\kappa$ is a cocone over $A$ by \Cref{lma:acc-sieve}.
Now let $\lambda : X \Rightarrow \Delta_d$ be a cocone over $A$.
Then the outer diagram below commutes for every $i \in \pos J$ and $j, k \in \Upper i$:
\[
\begin{tikzcd}
& d & \\
B j \urar["\kappa_j"] && B k \ular["\kappa_k"'] \\
& B i \ular \urar & \\
A j \ar[uu, equal] && A k \ar[uu, equal] \\
& A i \ular \urar \ar[uu, two heads, "\eta_i"'] &
\end{tikzcd}
\]
Since $\eta_i$ is epic, the following diagram also commutes:
\[
\begin{tikzcd}
& d & \\
B j \urar["\kappa_j"] && B k \ular["\kappa_k"'] \\
& B i \ular \urar &
\end{tikzcd}
\]
Hence $\lambda$ is also a cocone over $B$, so there is universal morphism $u : c \to d$ such that $\lambda = \Delta_u \circ \kappa$.
Therefore $\kappa$ is a colimit of $A$.
\end{proof}

\subsection{Pullback construction}
\label{subsec:pullback}

For any poset $\pos J$ and sink $\kappa$, we can define a $\pos J$-anticocone $\Pi_\pos J(\kappa)$ by a pullback construction.
When this exists, we show that it is terminal in $\Acc_\pos J(\kappa)$, and that $\kappa$ has $\pos J$-anticolimits iff $\Pi_\pos J(\kappa)$ is an anticolimit.
Therefore, $\Pi_\pos J(\kappa)$ can be considered the ``most general'' $\pos J$-anticolimit of $\kappa$, and can be used to verify the existence of anticolimits.

\begin{definition}
For a poset $\pos J$ and a sink $\kappa$, we define
\[
\Pi_\pos J(\kappa) : \pos J \to \cat C
\]
by the following construction:
\begin{itemize}
\item every object $i$ is mapped to the pullback of $\set{\kappa_j \mid j \in \Upper i}$;
\item every morphism $i \leq j$ is mapped to the universal morphism between the respective pullbacks that exists since $\Upper j \subseteq \Upper i$.
\end{itemize}
If any of these pullbacks fail to exist, then $\Pi_\pos J(\kappa)$ does not exist.
\end{definition}

Note that this is an anticocone of $\kappa$ by the universal property of pullbacks.
To illustrate how this works, consider a simple example.

\begin{example}
Let $\pos J = \mathcal{P}^+(\ord 3)^\mathrm{op}$ and let $\kappa$ be a sink:
\[
\begin{tikzcd}
& C & \\
X_0 \urar & X_1 \uar & X_2 \ular
\end{tikzcd}
\]
The associated anticocone $\Pi_\pos J(\kappa)$ is given by the following diagram:
\[
\begin{tikzcd}
& C & \\
X_0 \urar & X_1 \uar & X_2 \ular \\
X_0 \times_C X_1 \uar[dashed] \urar[dashed] & X_0 \times_C X_2 \ular[dashed] \urar[dashed] & X_1 \times_C X_2 \ular[dashed] \uar[dashed] \\
& X_0 \times_C X_1 \times_C X_2 \ular[dashed] \uar[dashed] \urar[dashed] & 
\end{tikzcd}
\]
\end{example}

\begin{lemma}
\label{lma:terminal-anticocone}
If $\Pi_\pos J(\kappa)$ exists, it is the terminal object in $\Acc_\pos J(\kappa)$.
\end{lemma}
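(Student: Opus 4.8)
The plan is to show that for an arbitrary anticocone $A \in \Acc_\pos J(\kappa)$ there is a unique morphism $\eta : A \to \Pi_\pos J(\kappa)$ in $\Ext_\pos J(X)$, which by definition is a natural transformation that is the identity on the maximal part $X$. First I would construct the components of $\eta$. For each $i \in \pos J$, the object $\Pi_\pos J(\kappa)(i)$ is the pullback of the family $\set{\kappa_j \mid j \in \Upper i}$. Since $A$ is an anticocone, $\kappa$ is a cocone over $A$, so for each $i$ we have a cone from $A(i)$ to that same family: the legs are the composites $A(i) \to A(j) \xrightarrow{\kappa_j} C$ for $j \in \Upper i$ (which all agree because $\kappa$ is a cocone over $A$), or more precisely the legs $A(i) \to A(j) = X(j)$ for $j \in \Upper i$ together with the common composite to $C$. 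The universal property of the pullback $\Pi_\pos J(\kappa)(i)$ then yields a unique map $\eta_i : A(i) \to \Pi_\pos J(\kappa)(i)$ commuting with all the projections. For $i$ maximal, $\Upper i = \set{i}$, the ``pullback'' is just $X(i)$ itself, and $\eta_i = \id_{X(i)}$; so $\eta$ restricts to the identity on $X$ as required.

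Next I would check naturality of $\eta$, i.e.\ that for each $i \leq j$ in $\pos J$ the square relating $\eta_i$, $\eta_j$, $A(i \leq j)$ and $\Pi_\pos J(\kappa)(i \leq j)$ commutes. Because $\Pi_\pos J(\kappa)(j)$ is a pullback, two maps into it are equal iff they agree after postcomposition with every projection $\Pi_\pos J(\kappa)(j) \to X(k)$, $k \in \Upper j$. Postcomposing either leg of the square with such a projection and using the defining properties of $\eta_i$, $\eta_j$ and of the transition map $\Pi_\pos J(\kappa)(i \leq j)$ (which is itself the unique map compatible with projections, using $\Upper j \subseteq \Upper i$), both sides reduce to the same composite $A(i) \to A(j) \to X(k)$. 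This is a routine diagram chase once the universal properties are in place.

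Finally, uniqueness of $\eta$: any morphism $A \to \Pi_\pos J(\kappa)$ in $\Ext_\pos J(X)$ has, at each $i$, a component $A(i) \to \Pi_\pos J(\kappa)(i)$ which by naturality over the arrows $i \leq j$ (with $j \in \Upper i$ maximal) must be compatible with all the pullback projections; but $\Pi_\pos J(\kappa)(i)$ being a pullback forces such a map to be exactly $\eta_i$. Hence $\eta$ is the unique morphism to $\Pi_\pos J(\kappa)$, so it is terminal in $\Acc_\pos J(\kappa)$ (noting that $\Pi_\pos J(\kappa)$ is itself an anticocone, as already remarked after its definition). I expect the main obstacle to be purely bookkeeping: correctly identifying the cone that $A(i)$ forms over the family $\set{\kappa_j \mid j \in \Upper i}$ — one must use that for $i$ non-maximal the relevant legs factor through the maximal $j \in \Upper i$, and that the cocone condition on $A$ guarantees the requisite compatibilities — and then being careful that the transition maps of $\Pi_\pos J(\kappa)$ interact correctly with $\eta$ in the naturality check. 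No genuinely hard step is anticipated; the content is entirely in the universal property of pullbacks.
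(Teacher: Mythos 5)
Your proposal is correct and follows essentially the same route as the paper: construct the components $\eta_i$ via the universal property of the wide pullbacks $\Pi_\pos J(\kappa)(i)$, verify naturality by postcomposing with the projections, observe the transformation is trivial on $\max\pos J$, and deduce uniqueness from compatibility with the projections. Your treatment of the uniqueness step (noting that any morphism in $\Ext_\pos J(X)$ is forced to be compatible with the projections by naturality over the arrows into maximal elements) is in fact slightly more explicit than the paper's, which simply asserts the components are unique by construction.
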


\begin{proof}
If $A$ is a $\pos J$-anticocone of $\kappa$, then $A i$ is a cone over $\set{\kappa_j}_{j \in \Upper i}$ for every $i \in \pos J$, so we have a universal morphism into the pullback $u_i: A i \to \Pi_\pos J(\kappa)(i)$.
Now the following commutes whenever $i \leq j$ due to the universal property of pullbacks:
\[
\begin{tikzcd}
A i \dar["u_i"'] \rar & A j \dar["u_j"] \\
\Pi_\pos J(\kappa)(i) \rar & \Pi_\pos J(\kappa)(j)
\end{tikzcd}
\]
As a result, we have a natural transformation
\[
u : A \Rightarrow \Pi_\pos J(\kappa).
\]
This is trivial on $\max \pos J$, so it is a morphism in $\Acc_\pos J(X)$.
Moreover, it is unique since the components $u_i$ are unique by construction.
\end{proof}

\begin{corollary}
If $\Pi_\pos J(\kappa)$ exists, then the category of anticocones $\Acc_\pos J(\kappa)$ is isomorphic to the slice category $\Ext_\pos J(X) / \Pi_\pos J(\kappa)$, with the isomorphism given by the forgetful functor of the slice category.
\end{corollary}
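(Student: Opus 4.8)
The plan is to obtain this as an instance of a general fact about sieves: if $\cat S$ is a sieve in a category $\cat E$ and $\cat S$ possesses a terminal object $T$, then the forgetful functor $U : \cat E / T \to \cat E$ corestricts to an isomorphism of categories $\cat E / T \xrightarrow{\sim} \cat S$. I would then apply this with $\cat E = \Ext_\pos J(X)$, with $\cat S = \Acc_\pos J(\kappa)$ — a sieve by \Cref{lma:acc-sieve} — and with $T = \Pi_\pos J(\kappa)$, which is terminal in $\cat S$ by \Cref{lma:terminal-anticocone}.

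To prove the general fact, I would first analyse the hom-sets into $T$. Given any morphism $f : E \to T$ in $\cat E$, closure of $\cat S$ under precomposition applied to $\id_T \circ f$ shows that $f$, and hence its domain $E$, lies in $\cat S$. Since $\cat S$ is moreover full, $\cat E(E, T) = \cat S(E, T)$, and the latter is a singleton because $T$ is terminal in $\cat S$. Two consequences follow. On objects, $U$ is injective — an object of $\cat E/T$ is a pair $(E, f : E \to T)$ and the second component is uniquely determined by the first — with image precisely the objects of $\cat S$, since every object of $\cat S$ admits its unique morphism to $T$ while no object outside $\cat S$ admits any morphism to $T$ at all.

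For morphisms, $U$ is faithful by construction of the slice category, and it is full onto $\cat S$: for objects $(E_1, f_1)$ and $(E_2, f_2)$ of $\cat E/T$ and any morphism $h : E_1 \to E_2$ in $\cat E$, the slice triangle $f_2 \circ h = f_1$ commutes automatically, both sides being morphisms $E_1 \to T$ of which there is only one. Thus $U$ restricts to a functor $\cat E / T \to \cat S$ that is bijective on objects and fully faithful, hence an isomorphism of categories. Specialising gives $\Acc_\pos J(\kappa) \cong \Ext_\pos J(X) / \Pi_\pos J(\kappa)$ with the isomorphism given by the forgetful functor, as claimed.

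There is no serious obstacle here — the argument is essentially bookkeeping — but the one point that genuinely requires the sieve hypothesis, rather than merely "$\Pi_\pos J(\kappa)$ is terminal in $\Acc_\pos J(\kappa)$", is the uniqueness of the structure morphism $E \to T$ as computed in the ambient category $\Ext_\pos J(X)$; this is what makes the slice triangles commute for free, and it is precisely where fullness and precomposition-closure of $\Acc_\pos J(\kappa)$ are used.
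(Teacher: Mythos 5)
Your proof is correct and follows exactly the route the paper intends: the paper's own proof is a one-line appeal to \Cref{lma:acc-sieve,lma:terminal-anticocone}, and your argument simply fills in the routine verification that a sieve with a terminal object is isomorphic, via the forgetful functor, to the slice over that terminal object. Your closing remark correctly identifies where the sieve hypothesis (fullness and closure under precomposition) is genuinely needed beyond mere terminality.
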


\begin{proof}
This follows from \Cref{lma:acc-sieve,lma:terminal-anticocone}.
\end{proof}

\begin{theorem}
\label{thm:canonical-anticolimit}
If $\Pi_\pos J(\kappa)$ exists, either one of the following holds:
\begin{itemize}
\item $\kappa$ has no $\pos J$-anticolimits, or
\item $\Pi_\pos J(\kappa)$ is an anticolimit of $\kappa$.
\end{itemize}
\end{theorem}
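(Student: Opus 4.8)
The plan is to derive this immediately from the two structural facts already established: that $\Pi_\pos J(\kappa)$ is terminal in $\Acc_\pos J(\kappa)$ (\Cref{lma:terminal-anticocone}) and that $\Acl_\pos J(\kappa)$ is a cosieve inside $\Acc_\pos J(\kappa)$ (\Cref{lma:acl-cosieve}). The argument is a case split on whether $\Acl_\pos J(\kappa)$ is empty.

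First I would dispose of the trivial case: if $\kappa$ has no $\pos J$\-anticolimits, i.e.\ $\Acl_\pos J(\kappa)$ is empty, then the first alternative of the statement holds and there is nothing to prove. So I may assume there exists at least one anticolimit $A \in \Acl_\pos J(\kappa)$.

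Next, since $A$ is in particular an anticocone and $\Pi_\pos J(\kappa)$ is terminal in $\Acc_\pos J(\kappa)$ by \Cref{lma:terminal-anticocone}, there is a (unique) morphism $\eta : A \to \Pi_\pos J(\kappa)$ in $\Acc_\pos J(\kappa)$. Now apply the cosieve property from \Cref{lma:acl-cosieve}: a cosieve is closed under postcomposition with arbitrary morphisms of the ambient category, so from $A \in \Acl_\pos J(\kappa)$ and the morphism $\eta : A \to \Pi_\pos J(\kappa)$ in $\Acc_\pos J(\kappa)$ we conclude $\Pi_\pos J(\kappa) \in \Acl_\pos J(\kappa)$, which is exactly the second alternative.

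There is no real obstacle here; the only point requiring a moment's care is making sure the morphism supplied by terminality genuinely lives in $\Acc_\pos J(\kappa)$ (equivalently, is trivial on $\max \pos J$) so that \Cref{lma:acl-cosieve} applies to it — but this is precisely what \Cref{lma:terminal-anticocone} provides, since terminality is asserted within $\Acc_\pos J(\kappa)$ rather than merely in $\Ext_\pos J(X)$.
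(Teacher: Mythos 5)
Your proof is correct and is essentially identical to the paper's: both take an arbitrary anticolimit $A$, use the terminality of $\Pi_{\pos J}(\kappa)$ in $\Acc_{\pos J}(\kappa)$ (\Cref{lma:terminal-anticocone}) to obtain a morphism $A \to \Pi_{\pos J}(\kappa)$, and then apply the cosieve property of \Cref{lma:acl-cosieve} to conclude. Your closing remark about the morphism living in $\Acc_{\pos J}(\kappa)$ is a nice explicit check that the paper leaves implicit.
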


\begin{proof}
If $\kappa$ has an anticolimit $A$, we have a map $A \to \Pi_\pos J(\kappa)$ in $\Acc_\pos J(\kappa)$ by \Cref{lma:terminal-anticocone}, so $\Pi_\pos J(\kappa)$ is an anticolimit by \Cref{lma:acl-cosieve}.
\end{proof}

Hence, the category $\Acl_\pos J(\kappa)$ is isomorphic to a full subcategory of $\Ext_\pos J(X) / \Pi_\pos J(\kappa)$ which is empty or contains the terminal object.
Also if $\Pi_\pos J(\kappa)$ happens to have a colimit, we have a universal map:
\[
\colim \Pi_\pos J(\kappa) \to c
\]
Then $\kappa$ has $\pos J$-anticolimits iff this morphism is an isomorphism.

\begin{corollary}
Given a pair of morphisms $(f, g)$ with the same codomain which have a pullback, the following are equivalent:
\begin{itemize}
\item the morphisms $(f, g)$ have an antipushout, and
\item the following is simultaneously both a pullback and a pushout:\footnotemark{}
\end{itemize}
\[
\begin{tikzcd}[column sep=small]
& C \ar[dd, phantom, "\llcorner" rotate=45, very near start] & \\
A \urar["f"] && B \ular["g"'] \\
& A \times_C B \ular["\pi_A"] \urar["\pi_B"'] \ar[uu, phantom, "\urcorner" rotate=45, very near start] &
\end{tikzcd}
\]
\end{corollary}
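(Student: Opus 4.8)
The plan is to derive this corollary directly from \Cref{thm:canonical-anticolimit}, specialised to the span-shaped poset $\pos J$ pictured above in the discussion of antipushouts: the three-element poset with maximal elements $j$, $k$ and a single element $i$ below both. For this $\pos J$, a sink over $\max \pos J$ is exactly a cospan $f : A \to C$, $g : B \to C$; a $\pos J$-anticocone is a span $p : D \to A$, $q : D \to B$ with $f \circ p = g \circ q$; and a $\pos J$-anticolimit is precisely an antipushout of $(f,g)$, as already noted.

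First I would compute $\Pi_\pos J(\kappa)$ for this shape. On the maximal elements, $\Upper j = \set{j}$ and $\Upper k = \set{k}$, so the associated pullbacks are pullbacks of one-element families, which are just $A$ and $B$ with identity legs; thus $\Pi_\pos J(\kappa)$ restricts to the cospan feet on $\max \pos J$, as required of an anticocone. On the bottom element, $\Upper i = \set{j,k}$, so $\Pi_\pos J(\kappa)(i)$ is the pullback of $f$ and $g$, which exists by hypothesis, and the structure maps $i \leq j$ and $i \leq k$ are sent to the two pullback projections. Hence $\Pi_\pos J(\kappa)$ is exactly the span $(\pi_A, \pi_B)$, it exists precisely because $(f,g)$ admits a pullback, and the square placing $(\pi_A, \pi_B)$ below $(f,g)$ is a pullback square by construction.

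Next I would apply \Cref{thm:canonical-anticolimit}: since $\Pi_\pos J(\kappa)$ exists, $\kappa$ has a $\pos J$-anticolimit if and only if $\Pi_\pos J(\kappa)$ is one. By the definition of anticolimit, the span $(\pi_A, \pi_B)$ is an anticolimit exactly when that square is a pushout. Chaining the two equivalences, and recalling that the square is always a pullback, yields the claimed equivalence between ``$(f,g)$ has an antipushout'' and ``the square is simultaneously a pullback and a pushout''.

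The argument is essentially bookkeeping and I anticipate no genuine difficulty; the one step deserving care is the identification of $\Pi_\pos J(\kappa)$ with the pullback span, namely checking that the pullback of a single morphism is its domain (with identity leg), and that the comparison morphism $\Pi_\pos J(\kappa)(i) \to \Pi_\pos J(\kappa)(j)$ induced by the inclusion $\Upper j \subseteq \Upper i$ is the projection $\pi_A$, and symmetrically for $\pi_B$. The ``simultaneously a pullback'' half of the statement is not an additional hypothesis but a feature of the construction: $\Pi_\pos J(\kappa)$ is built from a pullback, and by \Cref{lma:terminal-anticocone} it is moreover terminal among anticocones of $\kappa$, which is presumably what the accompanying footnote records.
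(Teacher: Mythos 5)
Your proposal is correct and takes essentially the same route as the paper, which treats this corollary as an immediate instance of \Cref{thm:canonical-anticolimit} applied to the span-shaped poset, with $\Pi_{\pos J}(\kappa)$ identified as the pullback span $(\pi_A,\pi_B)$ exactly as you compute. (One small aside: the footnote merely records the terminology ``bicartesian'' or ``pulation square'' for such squares; it is not about terminality of the anticocone.)
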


\footnotetext{Such squares are called \emph{bicartesian} or \emph{pulation squares}.}

For example, consider the following cospan in $\Ord$ for $n > 1$:
\[
\begin{tikzcd}
& \ord 1 & \\
\ord 0 \urar && \ord n \ular
\end{tikzcd}
\]
By the previous corollary, it does \emph{not} have any antipushouts, as the pullback is $\ord 0$, and the pushout of the pullback is then $\ord n$.
Note that this is despite the cospan being jointly epic (as per \Cref{prop:jointly-epic}).

\subsection{Change of shape}

Any monotone map $f : \pos I \to \pos J$ induces a functor
\[
(-) \circ f : [\pos J, \cat C] \to [\pos I, \cat C]
\]
Under certain conditions, this interacts nicely with anticolimits.

\begin{lemma}
Suppose that we have functors
\[
\begin{tikzcd}
\cat C_1 \dar["G_1"'] \rar["\alpha"] & \cat C_2 \dar["G_2"] \rar["F"] & \cat D \\
\cat E_1 \rar["\beta"'] & \cat E_2 \urar[dashed, "H"']
\end{tikzcd}
\]
\begin{enumerate}
\item $\beta$ preserves extensions, i.e.\ if $H$ is an extension of $F$ along $G_2$ then $H \circ \beta$ is an extension of $F \circ \alpha$ along $G_1$:
\[
\begin{tikzcd}[column sep=large]
\Ext_{G_2}(F) \rar["(-) \circ \beta"] \dar[hook] & \Ext_{G_1}(F \circ \alpha) \dar[hook] \\
{[\cat E_2, \cat D]} \rar["(-) \circ \beta"'] & {[\cat E_1, \cat D]}
\end{tikzcd}
\]
\item If $\alpha$ is epic, $\beta$ reflects extensions, i.e.\ if $H \circ \beta$ is an extension of $F \circ \alpha$ along $G_1$ then $H$ is an extension of $F$ along $G_2$:
\[
\begin{tikzcd}[column sep=large]
\Ext_{G_2}(F) \ar[dr, phantom, "\lrcorner" very near start] \rar["(-) \circ \beta"] \dar[hook] & \Ext_{G_1}(F \circ \alpha) \dar[hook] \\
{[\cat E_2, \cat D]} \rar["(-) \circ \beta"'] & {[\cat E_1, \cat D]}
\end{tikzcd}
\]
\end{enumerate}
\end{lemma}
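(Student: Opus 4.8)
The plan is to unwind the definitions and chase the diagram of functors, treating parts (1) and (2) separately but uniformly. Recall that $\Ext_{G}(F)$ was defined as a pullback in $\Cat$: it is the full subcategory of $[\cat E, \cat D]$ consisting of those $H$ with $H \circ G = F$, with morphisms the natural transformations $\eta$ with $\eta \circ G = \id_F$. So for part (1), I would simply observe that precomposition with $\beta$ sends an object $H$ of $[\cat E_2, \cat D]$ to $H \circ \beta$, and that if $H \circ G_2 = F$ then $(H \circ \beta) \circ G_1 = H \circ (\beta \circ G_1) = H \circ (G_2 \circ \alpha) = F \circ \alpha$, using that the left square of functors commutes (which I take to be part of the hypothesis of the lemma — the diagram is asserted to commute). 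The same computation on components handles morphisms: if $\eta \circ G_2 = \id_F$ then $(\eta \circ \beta) \circ G_1 = \eta \circ (G_2 \circ \alpha) = \id_F \circ \alpha = \id_{F \circ \alpha}$. Hence $(-) \circ \beta$ restricts to a functor $\Ext_{G_2}(F) \to \Ext_{G_1}(F \circ \alpha)$, which is exactly the commuting square in (1). This part is purely formal.

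For part (2), the content is that under the assumption that $\alpha$ is epic, the square in (1) is a pullback of categories, i.e.\ an object $H \in [\cat E_2, \cat D]$ lying in $\Ext_{G_1}(F \circ \alpha)$ after precomposition with $\beta$ must already lie in $\Ext_{G_2}(F)$, and similarly for morphisms. So suppose $H \circ \beta \circ G_1 = F \circ \alpha$. I want to conclude $H \circ G_2 = F$. Rewriting the hypothesis using $\beta \circ G_1 = G_2 \circ \alpha$ gives $H \circ G_2 \circ \alpha = F \circ \alpha$. Now both $H \circ G_2$ and $F$ are functors $\cat C_2 \to \cat D$ which agree after precomposition with the epimorphism $\alpha : \cat C_1 \to \cat C_2$; since $\alpha$ is epic in $\Cat$, right-cancellation gives $H \circ G_2 = F$, as required. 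The morphism case is identical: if $\eta \circ \beta \circ G_1 = \id_{F \circ \alpha}$, i.e.\ $(\eta \circ G_2) \circ \alpha = \id_F \circ \alpha$, then epicness of $\alpha$ yields $\eta \circ G_2 = \id_F$. This establishes that the square is a pullback, since the outer square $[\cat E_2,\cat D] \to [\cat E_1, \cat D]$ together with the two full-subcategory inclusions already exhibits $\Ext_{G_2}(F)$ as the preimage, and we have just shown precisely that an object/morphism of $[\cat E_2, \cat D]$ lands in $\Ext_{G_1}(F\circ\alpha)$ iff it lands in $\Ext_{G_2}(F)$.

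The only genuinely non-obvious point — and the one I would flag as the main obstacle, or at least the step deserving a sentence of care — is the right-cancellation of $\alpha$. Epimorphisms in $\Cat$ are not simply the functors surjective on objects and morphisms; they include, for instance, functors that are bijective on objects and "generate" the target under composition, and more exotically the inclusion of a poset-like category into its Cauchy completion, etc. However, the cancellation property $G \circ \alpha = G' \circ \alpha \implies G = G'$ is literally the definition of $\alpha$ being epic, so no classification of epis in $\Cat$ is needed; one just applies the definition directly at the level of functors and at the level of natural transformations (a natural transformation $F \circ \alpha \Rightarrow F \circ \alpha$ being precomposition with $\alpha$ of one on $F$, and equality of such whiskered transformations forcing equality of the originals when $\alpha$ is epic — here one should be slightly careful that "$\eta \circ \alpha = \eta' \circ \alpha \implies \eta = \eta'$" follows from $\alpha$ epic by the usual argument, since whiskering is functorial). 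I would write out the object-level argument in full and remark that the morphism-level argument is the same, as the paper defers full proofs to the appendix anyway.
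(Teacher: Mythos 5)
Your proposal is correct and matches the paper's own argument: part (1) is the routine check that precomposition with $\beta$ preserves the equation defining extensions via $\beta \circ G_1 = G_2 \circ \alpha$, and part (2) is exactly the paper's computation $H \circ G_2 \circ \alpha = H \circ \beta \circ G_1 = F \circ \alpha$ followed by right-cancellation of the epimorphism $\alpha$. Your additional care about the morphism level (cancelling $\alpha$ against whiskered natural transformations) is a reasonable elaboration of a point the paper leaves implicit.
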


\begin{proof}
(1) is straightforward.
For (2), we have that
\[
H \circ G_2 \circ \alpha = H \circ \beta \circ G_1 = F \circ \alpha \implies H \circ G_2 = F
\]
\end{proof}

\begin{definition}
Let $f : \pos I \to \pos J$ be a monotone map that preserves maximal elements.
Then we define the following terminology:
\begin{itemize}
\item $f$ \emph{preserves} $\pos J$-anticocones of $\kappa$ if:
\[
\forall A \in \Ext_\pos J(X) \ldotp A \in \Acc_\pos J(\kappa) \implies A \circ f \in \Acc_\pos I(\kappa \circ f)
\]
\item $f$ \emph{reflects} $\pos J$-anticocones of $\kappa$ if:
\[
\forall A \in \Ext_\pos J(X) \ldotp A \circ f \in \Acc_\pos I(F \circ \kappa) \implies A \in \Acc_\pos J(\kappa)
\]
\item $f$ \emph{preserves} $\pos J$-anticolimits of $\kappa$ if:
\[
\forall A \in \Acc_\pos J(\kappa) \ldotp A \in \Acl_\pos J(\kappa) \implies A \circ f \in \Acl_\pos I(\kappa \circ f)
\]
\item $f$ \emph{reflects} $\pos J$-anticolimits of $\kappa$ if:
\[
\forall A \in \Acc_\pos J(\kappa) \ldotp A \circ f \in \Acl_\pos I(F \circ \kappa) \implies  A \in \Acl_\pos J(\kappa)
\]
\end{itemize}
\end{definition}

\begin{definition}
A monotone map $f : \pos I \to \pos J$ is \emph{fair} if
\begin{itemize}
\item it preserves and lifts maximal elements:
\[
\begin{tikzcd}
\max \pos I \dar[hook] \rar[two heads, "f"] & \max \pos J \dar[hook] \\
\pos I \rar["f"'] & \pos J
\end{tikzcd}
\]
\item it lifts pairwise lower bounds of maximal elements:
\begin{center}
\begin{tikzpicture}[yscale=.8]

\node (I) at (0, 2, 1) {$\pos I$};
\node (J) at (0, 0, 1) {$\pos J$};

\draw[->] (I) to node[left] {$f$} (J);

\node (i0) at (2, 2, 1) {$\exists i_0$};
\node (i1) at (4, 2, 2) {$i_1$};
\node (i2) at (4, 2, 0) {$i_2$};

\draw[->, dashed] (i0) to (i1);
\draw[->, dashed] (i0) to (i2);

\node (j0) at (2, 0, 1) {$j$};
\node (j1) at (4, 0, 2) {$f i_1$};
\node (j2) at (4, 0, 0) {$f i_2$};

\draw[->] (j0) to (j1);
\draw[->] (j0) to (j2);

\end{tikzpicture}

\end{center}
That is, for all $j \in \pos J$ and $i_1, i_2 \in \max \pos J$ with $j \leq f i_1$ and $j \leq f i_2$, there exists $i_0 \in \pos I$ with $i_0 \leq i_1$ and $i_0 \leq i_2$ and $j = f i_0$.
\end{itemize}
\end{definition}

\paragraph{Final functors}
Recall that a functor $F : \cat C \to \cat D$ is \emph{final} if for any functor $G : \cat D \to \cat E$, there is a natural isomorphism
\[
\colim GF \cong \colim G.
\]
Also recall that $F$ is final iff for every $d \in \cat D$, the comma category $d / F$ is connected, i.e.\ any two objects are connected by a zigzag of morphisms.
Note that if $f : \pos I \to \pos J$ is a monotone map, the comma category $j / f$ for $j \in \pos J$ is just the sub-poset $f^{-1}(\upper j) \subseteq \pos I$.

\begin{theorem}
\label{thm:change-of-shape}
Let $f : \pos I \to \pos J$ be a monotone map.
\begin{enumerate}
\item If $f$ is fair, it preserves and reflects anticocones:
\[
\begin{tikzcd}[column sep=large]
\Acc_\pos J(\kappa) \ar[dr, phantom, "\lrcorner" very near start] \rar["(-) \circ f"] \dar[hook] & \Acc_\pos I(\kappa \circ f) \dar[hook] \\
\Ext_\pos J(X) \rar["(-) \circ f"'] & \Ext_\pos I(X \circ f) 
\end{tikzcd}
\]
\item If $f$ is also final, it preserves and reflects anticolimits:
\[
\begin{tikzcd}[column sep=large]
\Acl_\pos J(\kappa) \ar[dr, phantom, "\lrcorner" very near start] \rar["(-) \circ f"] \dar[hook] & \Acl_\pos I(\kappa \circ f) \dar[hook] \\
\Acc_\pos J(\kappa) \rar["(-) \circ f"'] & \Acc_\pos I(\kappa \circ f) 
\end{tikzcd}
\]
\end{enumerate}
\end{theorem}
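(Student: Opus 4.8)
The plan is to treat (1) and (2) as two pullback-square claims and reduce each of them to an object-level statement, since every vertical map in both diagrams is an inclusion of a full subcategory ($\Acc_\pos J(\kappa)$ is a sieve in $\Ext_\pos J(X)$ by \Cref{lma:acc-sieve}, $\Acl_\pos J(\kappa)$ a cosieve in $\Acc_\pos J(\kappa)$ by \Cref{lma:acl-cosieve}, and likewise over $\pos I$). For a square of full subcategory inclusions, once the underlying sets of objects form a pullback the morphisms automatically do too, so in each case it suffices to prove: for $A$ in the bottom-left category, $A$ lies in the top-left category iff $A\circ f$ lies in the top-right one.

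For (1), I would first check that $(-)\circ f$ genuinely restricts to $\Ext_\pos J(X)\to\Ext_\pos I(X\circ f)$ (immediate, using that $f$ preserves maximal elements so $f(\max\pos I)\subseteq\max\pos J$) and that the outer square of extension categories is a pullback: this is precisely the preceding lemma on preservation and reflection of extensions, applied with $\alpha=f|_{\max\pos I}$, $\beta=f$, $F=X$, and the inclusions $\max\pos I\hookrightarrow\pos I$, $\max\pos J\hookrightarrow\pos J$ as $G_1,G_2$ --- here fairness is what makes $\alpha$ surjective, hence epic. It then remains to show $A\in\Acc_\pos J(\kappa)$ iff $A\circ f\in\Acc_\pos I(\kappa\circ f)$. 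The forward (preservation) direction is a one-line check: a compatibility triangle for $A\circ f$ at $i\in\pos I$ with $j,k\in\Upper i$ becomes, after applying $f$, a compatibility triangle for $A$ at $fi$ with $fj,fk$, and $fj,fk\in\Upper(fi)$ because $f$ is monotone and preserves maximal elements. The backward (reflection) direction is where the definition of \emph{fair} earns its keep: given $i\in\pos J$ and $j,k\in\Upper i$, use that $f$ lifts maximal elements to choose $j',k'\in\max\pos I$ with $fj'=j$, $fk'=k$; then $i\le fj'$ and $i\le fk'$, so the pairwise-lower-bound lifting property yields $i_0\in\pos I$ with $i_0\le j'$, $i_0\le k'$, $fi_0=i$; now $j',k'\in\Upper i_0$, and the compatibility triangle for $A\circ f$ at $(i_0;j',k')$ maps under $f$ exactly onto the required triangle for $A$ at $(i;j,k)$.

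For (2), I would bootstrap off (1): for $A\in\Acc_\pos J(\kappa)$ we already know $A\circ f\in\Acc_\pos I(\kappa\circ f)$, so each of $A$ and $A\circ f$ carries an induced cocone, and one checks directly (again using only monotonicity and preservation of maximal elements) that the cocone induced by $A\circ f$ over $\kappa\circ f$ is the restriction along $f$ of the cocone induced by $A$ over $\kappa$. Since $A$ is an anticolimit iff its induced cocone is universal, and similarly for $A\circ f$, it is enough to know that precomposition with $f$ both sends universal cocones to universal cocones and reflects universality. This is the standard consequence of finality recalled just before the theorem: $f$ being final means each comma category $j/f\cong f^{-1}(\upper j)$ is connected, and the usual transport-along-zigzags argument then shows that $(-)\circ f$ is an isomorphism from the category of cocones over $A$ to the category of cocones over $A\circ f$, carrying one induced cocone to the other; an isomorphism of categories preserves and reflects initial objects, and a universal cocone is exactly an initial object of the cocone category, which finishes it.

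I expect the reflection direction of (1) to be the main obstacle, not because it is long but because it is the single place where the technical clauses of \emph{fair} (lifting maximal elements, and lifting pairwise lower bounds of maximal elements) must line up exactly with the compatibility condition defining an anticocone; the remainder is careful bookkeeping across the four categories $\Acl\hookrightarrow\Acc\hookrightarrow\Ext$ over $\pos J$ and their counterparts over $\pos I$, and between $\max\pos I$, $\max\pos J$ and the ambient posets. A secondary point of care in (2) is that the notion of colimit above is phrased via definition (2) of cocone, so the finality argument should either be run at the level of these sink-plus-compatibility cocone categories or be preceded by a passage to the natural-transformation presentation; in both cases the only finality input needed is the connectedness of $f^{-1}(\upper j)$.
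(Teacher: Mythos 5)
Your proposal is correct and follows essentially the same route as the paper's proof: the core of part (1) is the same identification of the compatibility triangles for $A \circ f$ over $\pos I$ with those for $A$ over $\pos J$ (preservation from monotonicity plus preservation of maximal elements, reflection from the two lifting clauses of fairness), and part (2) is deduced from finality exactly as in the paper. Your version merely spells out more explicitly the reduction to an object-level statement via the sieve/cosieve lemmas and the cocone-category isomorphism induced by a final functor, both of which the paper leaves implicit.
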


\begin{proof}
Note that if $f$ is fair, then the class of diagrams
\begin{equation*}
\begin{tikzcd}
& c & \\
A j_1 \urar["\kappa_{j_1}"] && A j_2 \ular["\kappa_{j_2}"'] \\
& A j_0 \ular \urar &
\end{tikzcd}
\end{equation*}
for $j_0 \in \pos J$ and $j_1, j_2 \in \Upper j_0$, is equivalent to the following class of diagrams, for $i_0 \in \pos I$ and $i_1, i_2 \in \Upper i_0$:
\begin{equation*}
\begin{tikzcd}
& c & \\
A f i_1 \urar["\kappa_{f i_1}"] && A f i_2 \ular["\kappa_{f i_2}"'] \\
& A f i_0 \ular \urar &
\end{tikzcd}
\end{equation*}
In particular, the first class is contained in the second class since $f$ lifts maximal elements and lower bounds of maximal elements, and vice-versa since $f$ preserves maximal elements.
Hence, $f$ preserves and reflects anticocones.
Finally, (2) follows immediately from $f$ being final.
\end{proof}

Therefore, given a fair and final monotone map $f : \pos I \to \pos J$, we can compute $\pos J$-anticolimits in terms of $\pos I$-anticolimits, which are assumed to be simpler.
In particular, given an $\pos I$-anticolimit $A$, we can construct an anticolimit in $\pos J$ by extending along $f$ as follows:
\[
\begin{tikzcd}
\pos I \dar["f"'] \rar["A"] & \cat C \\
\pos J \urar[dashed]
\end{tikzcd}
\]
Note that a particular $\pos I$-anticolimit may fail to extend.
However, any extension gives a valid $\pos J$-anticolimit and \emph{all} $\pos J$-anticolimits can be obtained in this way, for any fixed choice of $f : \pos I \to \pos J$.

\subsubsection{Removing non-sinking edges}

Here we introduce a technique that, for a given poset $\pos J$, may be used to construct a poset $\pos I$ and a fair and final monotone map $f : \pos I \to \pos J$, such that $\pos I$ is simpler.
In particular, $\pos I$ will have fewer arrows.
Recall that in a poset $\pos J$, we say that $a$ is \emph{covered} by $b$ if $a < b$ and there is no $c$ such that $a < c < b$.

\begin{definition}
Let $\pos J$ be a poset with $a$ covered by $b$.
We define
\[
\pos J \setminus \set{a < b}
\]
to be the poset obtained from $\pos J$ by removing the relation $a < b$.
\end{definition}

For example, consider removing the red arrow here:
\[
\begin{tikzcd}
& \bullet & \\
\bullet \urar && \bullet \ular \\
& \bullet \ular[red] \urar &
\end{tikzcd}  
\quad\leadsto\quad
\begin{tikzcd}
& \bullet & \\
\bullet \urar && \bullet \ular \\
& \bullet  \urar &
\end{tikzcd}
\]

\begin{lemma}
\label{lma:non-sinking-edge}
If $b$ is non-maximal, this map is fair and final: 
\[
\pos J \setminus \set{a < b} \to \pos J
\]
\end{lemma}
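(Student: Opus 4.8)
The plan is to work concretely at the level of the underlying set. Write $\pos I = \pos J \setminus \set{a < b}$; this has the same underlying set as $\pos J$, with $x \leq_{\pos I} y$ iff $x \leq_{\pos J} y$ and $(x,y) \neq (a,b)$. (That this relation is still transitive is exactly where the covering hypothesis is used: a putative witness $a \leq_{\pos J} y \leq_{\pos J} b$ would force $y \in \set{a,b}$, and either choice reintroduces $a \leq_{\pos I} b$.) The map $f$ is the identity on elements, hence monotone since $\leq_{\pos I}\,\subseteq\,\leq_{\pos J}$. I will then verify fairness and finality in turn; the load-bearing observation is that $\max \pos I = \max \pos J$ as subsets of the common underlying set.

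For that equality: if $x \neq a$, then $\upper x$ is the same subset whether computed in $\pos I$ or $\pos J$, since the only deleted relation has source $a$; hence $x$ is maximal in one iff in the other. For $x = a$: as $b$ is non-maximal there is $c$ with $a <_{\pos J} b <_{\pos J} c$, and since $c \neq b$ the relation $a <_{\pos I} c$ survives, so $a$ is non-maximal in $\pos I$, as it is in $\pos J$. Thus $f$ restricts to the identity $\max \pos I \to \max \pos J$, which trivially preserves and lifts maximal elements (the square is in fact a pullback). For the lifting of pairwise lower bounds of maximal elements, given $j \in \pos J$ and $i_1, i_2 \in \max \pos I$ with $j \leq_{\pos J} i_1$ and $j \leq_{\pos J} i_2$, I take $i_0 = j$: it suffices to check $j \leq_{\pos I} i_1$ and $j \leq_{\pos I} i_2$, and this holds because the only deleted relation is $(a,b)$ while $i_1, i_2$, being maximal, differ from the non-maximal $b$. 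So $f$ is fair.

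For finality: since $f$ is a monotone map of posets, the comma category $j/f$ is the sub-poset of $\pos I$ carried by the set $\upper j$ (computed in $\pos J$), and I must show this is connected. It is nonempty, as $j \in \upper j$. If $j \neq a$, then $j$ remains a least element of $\upper j$ in the $\pos I$-order — the only removed relation has source $a \neq j$ — so the sub-poset is connected. If $j = a$, then every element of $\upper a$ other than $b$ is still $\geq_{\pos I} a$; and for $b$ itself I again invoke non-maximality of $b$, picking $c$ with $b <_{\pos J} c$: both $a <_{\pos I} c$ and $b <_{\pos I} c$ hold (these relations survive, as $c \neq b$), so the zigzag $a \to c \leftarrow b$ inside $\upper a$ connects $b$ to $a$. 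Hence $j/f$ is connected for every $j \in \pos J$, so $f$ is final.

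The only place that genuinely calls for care is the case $j = a$ of the finality argument, where $\upper a$ need not have a least element in the $\pos I$-order; connectedness is rescued precisely by the hypothesis that $b$ is non-maximal, which is also what stops $a$ from becoming a new maximal element. Everything else is bookkeeping about the effect of deleting one covering relation.
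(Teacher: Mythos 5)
Your proof is correct and follows essentially the same route as the paper's: the identification $\max(\pos J \setminus \set{a<b}) = \max \pos J$ via non-maximality of $b$, the observation that the deleted relation cannot participate in lower bounds of maximal elements, and the two-case connectedness argument for $\upper j$ with the zigzag $a \to c \leftarrow b$ when $j = a$. Your additional check that the covering hypothesis keeps the truncated relation transitive is a detail the paper leaves implicit, but the substance is identical.
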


\begin{proof}
Removing $a < b$ cannot remove maximal elements and may only create one maximal element, that being $a$ if $\upper a = \set{a, b}$.
Since $b$ is non-maximal, the maximal elements are the same.

The map does not change the lower bounds of maximal elements, because $a < b$ does not participate in any lower bounds of maximal elements, since $b$ is non-maximal.
Hence the map is fair. To show that the map is final, we need to show that the subset $\upper i$ is connected in $\pos J \setminus \set{a < b}$ for every $i \in \pos J$.
We have two cases:
\begin{enumerate}
\item If $i \neq a$, then $i$ is below every element in $\upper i$.
\item If $i = a$, then $a$ is below every element in $\upper a$ except $b$, but $a$ and $b$ are both below some $c \in \upper a$ since $b$ is non-maximal.
\end{enumerate}
Therefore, $\upper i$ is connected in both cases, so the map is final.
\end{proof}

Note that an extension of the form
\[
\begin{tikzcd}
\pos J \setminus \set{i < j} \dar \rar["A"] & \cat C \\
\pos J \urar[dashed]
\end{tikzcd}
\]
consists of finding a morphism $A i \to A j$ that commutes with the rest of the diagram, which is a tractable problem in many categories.

\subsection{Hypergraph-like posets}

Consider the class of posets in which every non-maximal element is minimal.
These can be described in terms of hypergraphs.

\begin{definition}
A hypergraph $H = (V, E, i)$ consists of:
\begin{itemize}
\item a set $V$ of vertices,
\item a set $E$ of hyperedges, and
\item an incidence function $i : E \to \mathcal{P}^+(V)$.
\end{itemize}
\end{definition}

\begin{proposition}
There is a bijection between:
\begin{itemize}
\item hypergraphs $H = (V, E, i)$, and
\item posets $\pos J$ such that every non-maximal element is minimal.
\end{itemize}
\end{proposition}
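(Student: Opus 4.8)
The plan is to give explicit maps in both directions and verify that they are mutually inverse. The crucial structural observation is that in a poset $\pos J$ in which every non-maximal element is minimal, any strict inequality $x < y$ forces $x$ to be non-maximal, hence minimal, and $y$ to be non-minimal, hence (contraposing the hypothesis) maximal. Thus, as a set, $\pos J$ is the disjoint union of $M \coloneqq \max \pos J$ and $N \coloneqq \pos J \setminus M$; every element of $N$ is minimal; and every strict relation runs from an element of $N$ to an element of $M$. Moreover, for each $e \in N$ the set $\upper e \setminus \set{e}$ is non-empty (because $e$ is non-maximal) and contained in $M$ (because, as just noted, any $x$ with $e < x$ is maximal), so it is a well-defined element of $\mathcal{P}^+(M)$.

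From a poset $\pos J$ of the stated form I build the hypergraph $H(\pos J) \coloneqq (M, N, i)$ with incidence function $i(e) \coloneqq \upper e \setminus \set{e}$. Conversely, from a hypergraph $(V, E, i)$ I build a poset $\pos J(H)$ on the underlying set $V \sqcup E$ by declaring distinct elements of $V$ mutually incomparable, distinct elements of $E$ mutually incomparable, and $e < v$ precisely when $v \in i(e)$, for $e \in E$ and $v \in V$. This relation is reflexive by fiat, and it is antisymmetric and transitive because no chain $x < y < z$ can exist: $x < y$ would require $x \in E$ and $y \in V$, but then nothing strictly exceeds $y$. Hence $\pos J(H)$ is a poset; its maximal elements are exactly the elements of $V$ (each $v \in V$ has nothing above it, whereas each $e \in E$ lies below every element of the non-empty set $i(e)$), so its non-maximal elements are exactly the elements of $E$, all of which are minimal. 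Thus $\pos J(H)$ is again a poset of the required form.

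It remains to check $\pos J(H(\pos J)) = \pos J$ and $H(\pos J(H)) = H$. For the first, both posets have underlying set $M \sqcup N = \pos J$, and in $\pos J(H(\pos J))$ we have $e < v$ iff $v \in \upper e \setminus \set{e}$ iff $e < v$ in $\pos J$; by the structural observation $\pos J$ has no other strict relations, so the two orders coincide. For the second, $\max \pos J(H) = V$ and $\pos J(H) \setminus V = E$ as observed, and the reconstructed incidence of $e \in E$ is $\upper e \setminus \set{e} = \set{v \in V \mid e < v} = i(e)$. (Strictly, this is a bijection once a poset is identified with the pair consisting of its underlying set and its order relation; nothing essential is hidden here.) There is no deep obstacle: the only points needing care are the well-definedness of $i$ as landing in $\mathcal{P}^+(V)$ and, dually, the fact that the elements of $E$ genuinely become non-maximal in $\pos J(H)$ — both resting on $i(e) \neq \emptyset$ — together with the verification that $\pos J(H)$ acquires no spurious comparabilities, which is exactly where the hypothesis that every non-maximal element is minimal is used.
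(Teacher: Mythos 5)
Your proof is correct and follows essentially the same construction as the paper: the hypergraph-to-poset map is identical, and your incidence function $\upper e \setminus \set{e}$ agrees with the paper's $\Upper e = \max(\upper e)$ since, as you observe, every element strictly above a non-maximal $e$ is maximal. The only difference is that you spell out the mutual-inverse verification, which the paper leaves implicit.
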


\begin{proof}
Every hypergraph has an associated poset $V \sqcup E$ where $e \leq v$ iff $v \in i(e)$.
Conversely, every poset $\pos J$ such that every non-maximal element is minimal is the associated poset of a hypergraph:
\[
V \coloneqq \max \pos J \qquad E \coloneqq \pos J \setminus \max \pos J \qquad i(e) \coloneqq \Upper e
\]
\end{proof}

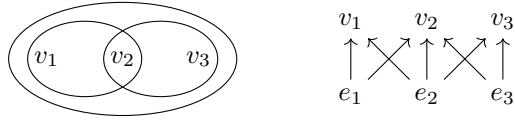
\begin{figure}
\centering
\begin{tikzpicture}[scale=0.5]

\node at (0, 1) {$v_1$};
\node at (2, 1) {$v_2$};
\node at (4, 1) {$v_3$};

\draw (1, 1) ellipse (1.5 and 1);
\draw (2, 1) ellipse (3 and 1.5);
\draw (3, 1) ellipse (1.5 and 1);

\node (v1) at (8, 2) {$v_1$};
\node (v2) at (10, 2) {$v_2$};
\node (v3) at (12, 2) {$v_3$};
\node (e1) at (8, 0) {$e_1$};
\node (e2) at (10, 0) {$e_2$};
\node (e3) at (12, 0) {$e_3$};

\draw[->] (e1) to (v1);
\draw[->] (e1) to (v2);
\draw[->] (e2) to (v1);
\draw[->] (e2) to (v2);
\draw[->] (e2) to (v3);
\draw[->] (e3) to (v2);
\draw[->] (e3) to (v3);

\end{tikzpicture}
\caption{Correspondence between hypergraphs and posets.}
\end{figure}

We say that such a poset is \emph{hypergraph-like}.
Note that a diagram $A$ over a hypergraph-like poset can be described as follows:
\begin{itemize}
\item for every vertex $v$, an object $A_v$, and
\item for every hyperedge $e$, a source $\set{A_e \to A_v}_{v \in i(e)}$.
\end{itemize}
i.e.\ a collection of objects and generalised relations between them.

As a corollary of \Cref{lma:non-sinking-edge}, we can reduce any poset $\pos J$ to a hypergraph-like poset $\widehat{\pos J}$ such that $\widehat{\pos J} \to \pos J$ is both fair and final.

\subsection{Anticolimits of sets}

We will now show how to compute anticolimits in $\Set$.
As shown in the previous section, it suffices to consider hypergraph-like posets, so fix a hypergraph $H = (V, E, i)$ and a sink $\set{f_v : A_v \to C}_{v \in V}$. We construct anticolimits according to the following scheme:
\begin{enumerate}
\item For every $e \in E$, compute the pullback
\[
\Pi_e \coloneqq \mathrm{pullback}(\set{f_v : A_v \to C}_{v \in i(e)})
\]
\item Choose a family of maps
\[
\set{g_e : A_e \to \Pi_e}_{e \in E}
\]
\item Compute the colimit of the maps
\[
\set{A_e \xrightarrow{g_e} \Pi_e \xrightarrow{\pi_v} A_v}_{e \in E, \, v \in i(e)}
\]
Recall that this is given by the following quotient:
\[
Q \coloneqq \coprod_{v \in V} A_v / \sim
\]
where $\sim$ is the smallest equivalence relation such that
\[
\pi_{v_1}(g_e(x)) \sim \pi_{v_2}(g_e(x))
\]
whenever $e \in E$, and $v_1, v_2 \in i(e)$, and $x \in A_e$.
\item Check that the universal map $u : Q \to C$ is an isomorphism.
\end{enumerate}

\subsection{Change of category}

Any functor $F : \cat C \to \cat D$ induces a functor
\[
F \circ (-) : [\pos J, \cat C] \to [\pos J, \cat D]
\]
Under certain conditions, this interacts nicely with anticolimits.

\begin{lemma}
Suppose that we have functors
\[
\begin{tikzcd}
\cat C \dar["G"'] \rar["F"] & \cat D_1 \rar["\alpha"] & \cat D_2 \\
\cat E \urar[dashed, "H"']
\end{tikzcd}
\]
\begin{enumerate}
\item $\alpha$ preserves extensions, i.e.\ if $H$ is an extension of $F$ along $G$ then $\alpha \circ H$ is an extension of $\alpha \circ F$ along $G$:
\[
\begin{tikzcd}[column sep=large]
\Ext_G(F) \rar["\alpha \circ (-)"] \dar[hook] & \Ext_G(\alpha \circ F) \dar[hook] \\
{[\cat E, \cat D_1]} \rar["\alpha \circ (-)"'] & {[\cat E, \cat D_2]}
\end{tikzcd}
\]
\item If $\alpha$ is monic, it reflects extensions, i.e.\ if $\alpha \circ H$ is an extension of $\alpha \circ F$ along $G$ then $H$ is an extension of $F$ along $G$:
\[
\begin{tikzcd}[column sep=large]
\Ext_G(F) \ar[dr, phantom, "\lrcorner" very near start] \rar[tail, "\alpha \circ (-)"] \dar[hook] & \Ext_G(\alpha \circ F) \dar[hook] \\
{[\cat E, \cat D_1]} \rar[tail, "\alpha \circ (-)"'] & {[\cat E, \cat D_2]}
\end{tikzcd}
\]
\end{enumerate}
\end{lemma}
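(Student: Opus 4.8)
The plan is to follow the strategy of the analogous change-of-shape lemma proved above, using that $\Ext_G(-)$ is by definition a subcategory of a functor category cut out by a pullback in $\Cat$, and that the induced functor $\alpha \circ (-)$ acts by postcomposition on functors and by left whiskering on natural transformations. The only data that must be tracked is the defining equation $H \circ G = F$ for objects of $\Ext_G(F)$ and $\eta \circ G = \id_F$ for its morphisms.

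For (1), I would argue separately on objects and on morphisms. If $H \circ G = F$ then postcomposing with $\alpha$ gives $\alpha \circ H \circ G = \alpha \circ F$, so $\alpha \circ H \in \Ext_G(\alpha \circ F)$; and if $\eta : H_1 \to H_2$ satisfies $\eta \circ G = \id_F$, then $(\alpha \circ \eta) \circ G = \alpha \circ (\eta \circ G) = \alpha \circ \id_F = \id_{\alpha \circ F}$, so $\alpha \circ \eta$ is again a morphism of $\Ext_G(\alpha \circ F)$. Hence $\alpha \circ (-)$ restricts to a functor $\Ext_G(F) \to \Ext_G(\alpha \circ F)$, and the square in (1) commutes strictly, all four legs being restrictions of the single whiskering operation.

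For (2), I would invoke the standard fact that a monomorphism in $\Cat$ is precisely a functor that is injective on objects and faithful, and that such functors are therefore left-cancellable; consequently $\alpha \circ H \circ G = \alpha \circ F$ forces $H \circ G = F$, which is exactly the reflection statement. To upgrade this to the asserted pullback square, I would observe that $\Ext_G(\alpha \circ F) \hookrightarrow [\cat E, \cat D_2]$ is a monic subcategory inclusion, so its pullback along $\alpha \circ (-) : [\cat E, \cat D_1] \to [\cat E, \cat D_2]$ is the subcategory of $[\cat E, \cat D_1]$ spanned by those $H$ and those $\eta$ that land in $\Ext_G(\alpha \circ F)$ after whiskering; left-cancellation then identifies this subcategory with $\Ext_G(F)$, giving the pullback. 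Along the way one records that $\alpha \circ (-)$ is itself injective on objects and faithful whenever $\alpha$ is, checked componentwise, which justifies the $\rightarrowtail$ decorations in the diagram.

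I do not expect a genuine obstacle here: the one point requiring any care is the characterisation of monomorphisms in $\Cat$ as the injective-on-objects faithful functors, together with the observation that ``injective on objects and faithful'' is inherited both by the induced whiskering functor and by its restriction to $\Ext$. Granting that, the remaining steps are one-line diagram chases strictly parallel to the change-of-shape case, just as that lemma's proof was dispatched in a single line.
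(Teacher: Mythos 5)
Your proof is correct and takes the same route as the paper, which merely asserts that (1) is straightforward and that (2) follows from $\alpha$ being monic; you supply the details that the paper omits. The one point genuinely worth recording --- and which you do record --- is that the pullback claim in (2) needs not only the object-level cancellation $\alpha \circ H \circ G = \alpha \circ F \implies H \circ G = F$ but also the morphism-level cancellation $(\alpha \circ \eta) \circ G = \id_{\alpha \circ F} \implies \eta \circ G = \id_F$, which follows from faithfulness of $\alpha$ (part of the characterisation of monomorphisms in $\Cat$ as the injective-on-objects faithful functors).
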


\begin{proof}
(1) is straightforward.
(2) follows from $\alpha$ being monic.
\end{proof}

\begin{definition}
Let $F : \cat C \to \cat D$ be a functor.
\begin{itemize}
\item $F$ \emph{preserves} $\pos J$-anticocones of $\kappa$ if:
\[
\forall A \in \Ext_\pos J(X) \ldotp A \in \Acc_\pos J(\kappa) \implies F \circ A \in \Acc_\pos J(F \circ \kappa)
\]
\item $F$ \emph{reflects} $\pos J$-anticocones of $\kappa$ if:
\[
\forall A \in \Ext_\pos J(X) \ldotp F \circ A \in \Acc_\pos J(F \circ \kappa) \implies A \in \Acc_\pos J(\kappa)
\]
\item $F$ \emph{preserves} $\pos J$-anticolimits of $\kappa$ if:
\[
\forall A \in \Acc_\pos J(\kappa) \ldotp A \in \Acl_\pos J(\kappa) \implies F \circ A \in \Acl_\pos J(F \circ \kappa)
\]
\item $F$ \emph{reflects} $\pos J$-anticolimits of $\kappa$ if:
\[
\forall A \in \Acc_\pos J(\kappa) \ldotp F \circ A \in \Acl_\pos J(F \circ \kappa) \implies  A \in \Acl_\pos J(\kappa)
\]
\end{itemize}
\end{definition}

\begin{definition}
A faithful functor $F : \cat C \to \cat D$ is \emph{directly nice} if it preserves and reflects $\pos J$-colimits, and \emph{indirectly nice} if it preserves $\pos J$-colimits, and $\cat C$ has pullbacks and $F$ preserves them.
\end{definition}

\begin{theorem}
\label{thm:change-of-category}
Let $F : \cat C \to \cat D$ be a functor.
\begin{enumerate}
\item If $F$ is faithful, it preserves and reflects anticocones:
\[
\begin{tikzcd}[column sep=large]
\Acc_\pos J(\kappa) \ar[dr, phantom, "\lrcorner" very near start] \rar["F \circ (-)"] \dar[hook] & \Acc_\pos J(F \circ \kappa) \dar[hook] \\
\Ext_\pos J(X) \rar["F \circ (-)"'] & \Ext_\pos J(F \circ X)
\end{tikzcd}
\]
\item If $F$ is directly nice, it preserves and reflects anticolimits, and similarly if $F$ is indirectly nice and $\kappa$ already has anticolimits:
\[
\begin{tikzcd}[column sep=large]
\Acl_\pos J(\kappa) \ar[dr, phantom, "\lrcorner" very near start] \rar["F \circ (-)"] \dar[hook] & \Acl_\pos I(F \circ \kappa) \dar[hook] \\
\Acc_\pos J(\kappa) \rar["F \circ (-)"'] & \Acc_\pos I(F \circ \kappa) 
\end{tikzcd}
\]
\end{enumerate}
\end{theorem}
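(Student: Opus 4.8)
The plan is to treat the three hypotheses — $F$ faithful, $F$ directly nice, $F$ indirectly nice with $\kappa$ already having anticolimits — in turn. In each case the asserted square reduces to the statement that $F \circ (-)$ both preserves and reflects the relevant property (being an anticocone, respectively being an anticolimit), because the vertical legs are inclusions of full subcategories, so the pullback in $\Cat$ of the bottom cospan is exactly the full subcategory $\set{A \mid F \circ A \text{ has the property}}$ and the comparison on morphisms is automatic.

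\emph{Faithful case.} Preservation is formal: $\kappa$ being a cocone over $A$ is the commutativity of the family of triangles through $c$ with legs $\kappa_j, \kappa_k$ and sides the images of $A i \to A j$ and $A i \to A k$, and functors preserve commuting triangles. Reflection uses faithfulness: if every such triangle for $F \circ A$ commutes, then $F$ of the corresponding triangle for $A$ commutes, hence the triangle for $A$ itself commutes. This parallels — and in fact sharpens, since we need not assume $F$ injective on objects — the change-of-category lemma preceding the statement.

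\emph{Directly nice case.} Via the equivalence between the two descriptions of a cocone, $A \in \Acl_\pos J(\kappa)$ says precisely that the cocone $A \Rightarrow \Delta_c$ extending $\kappa$ is colimiting. Since $F$ preserves $\pos J$-colimits this is sent to a colimiting cocone over $F \circ A$, giving preservation. For reflection, if $A \in \Acc_\pos J(\kappa)$ and $F \circ A \in \Acl_\pos J(F \circ \kappa)$, then the cocone $A \Rightarrow \Delta_c$ (which exists because $A$ is an anticocone) becomes colimiting after applying $F$, so by $F$ reflecting $\pos J$-colimits it was colimiting already, i.e.\ $A \in \Acl_\pos J(\kappa)$.

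\emph{Indirectly nice case, and the main obstacle.} Preservation is identical to the previous case, using only that $F$ preserves $\pos J$-colimits; the real content is reflection, and here we can no longer simply descend a mediating morphism along $F$, which is the one step requiring care. The remedy is to route through the canonical anticolimit. Since $\cat C$ has pullbacks, $\Pi \coloneqq \Pi_\pos J(\kappa)$ exists, and since $\kappa$ has anticolimits, \Cref{thm:canonical-anticolimit} gives $\Pi \in \Acl_\pos J(\kappa)$; hence $\colim \Pi = c$ with colimiting cocone extending $\kappa$. As $F$ preserves pullbacks, $F \circ \Pi = \Pi_\pos J(F \circ \kappa)$ compatibly with the pullback projections, and as $F$ preserves $\pos J$-colimits, $F \circ \kappa$ extends to the colimiting cocone of $\Pi_\pos J(F \circ \kappa)$. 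Now take $A \in \Acc_\pos J(\kappa)$ with $F \circ A \in \Acl_\pos J(F \circ \kappa)$ and let $\lambda$ be any cocone over $A$ with apex $d$. Applying $F$, the cocone $F \circ \lambda$ over $F \circ A$ factors through $F \circ \kappa$ via a unique $w \colon Fc \to Fd$; postcomposing with $w$ the cocone identities of $F \circ \kappa$ over $\Pi_\pos J(F \circ \kappa)$ shows $F \circ \lambda$ satisfies the cocone identities over $\Pi_\pos J(F \circ \kappa) = F \circ \Pi$, so by faithfulness $\lambda$ is a cocone over $\Pi$ in $\cat C$. Since $\Pi \in \Acl_\pos J(\kappa)$, this cocone factors uniquely through $\kappa$, and the resulting morphism $c \to d$ exhibits $\kappa$ as colimiting over $A$; uniqueness descends from that for $\Pi$ (equivalently, $\kappa$ restricted to $\max \pos J$ is jointly epic). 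Hence $A \in \Acl_\pos J(\kappa)$. The detour through $\Pi_\pos J(\kappa)$ — identified under $F$ with $\Pi_\pos J(F \circ \kappa)$ by pullback-preservation — is precisely what lets faithfulness carry the cocone condition back to $\cat C$, and it is also exactly where the hypothesis that $\kappa$ has anticolimits enters, since via \Cref{thm:canonical-anticolimit} it upgrades $\Pi_\pos J(\kappa)$ from a mere anticocone to an anticolimit.
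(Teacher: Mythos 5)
Your proof is correct and follows essentially the same route as the paper's: functoriality plus faithfulness for anticocones, direct transfer of universality in the directly nice case, and for the indirectly nice case the same detour through the canonical anticolimit $\Pi_{\pos J}(\kappa)$, identified with $\Pi_{\pos J}(F \circ \kappa)$ via pullback-preservation so that faithfulness can pull the cocone condition back to $\cat C$. Your write-up is in fact slightly more explicit than the paper's on why $F \circ \lambda$ is a cocone over $\Pi_{\pos J}(F \circ \kappa)$ and on the uniqueness of the mediating morphism.
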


\begin{proof}
Assume that $F$ is faithful, and let $A$ be an extension of~$X$.
Now for every $i \in \pos J$ and $j, k \in \Upper i$, the following diagram:
\[
\begin{tikzcd}
& c & \\
A j \urar["\kappa_j"] && A k \ular["\kappa_k"'] \\
& A i \ular \urar &
\end{tikzcd}
\]
commutes iff the following diagram commutes:
\[
\begin{tikzcd}
& F c & \\
F A j \urar["F \kappa_j"] && F A k \ular["F \kappa_k"'] \\
& F A i \ular \urar &
\end{tikzcd} 
\]
since $F$ is faithful.
Hence, $F$ preserves and reflects anticocones.

Clearly, if $F$ is directly nice, it preserves and reflects anticolimits, and if $F$ is indirectly nice, it preserves anticolimits.
Suppose that $\kappa$ has anticolimits.
Then by \Cref{thm:canonical-anticolimit}, $\Pi_\pos J(\kappa)$ is an anticolimit.
Let $A \to \Pi_\pos J(\kappa)$ be an anticocone such that $F \circ \kappa$ is universal over $F \circ A$.
Now if $\lambda : X \Rightarrow \Delta_d$ is a cocone over $A$, then $F \circ \lambda$ is a cocone over $F \circ A$, so there is a universal map $u' : F c \to F d$ such that
\[
F \circ \lambda = \Delta_{u'} \circ F \circ \kappa
\]
Then $F \circ \lambda$ is a cocone over $\Pi_\pos J(F \circ \kappa)$.
Since $F$ preserves pullbacks,
\[
\Pi_\pos J(F \circ \kappa) = F \circ \Pi_\pos J(\kappa)
\]
Since $F$ is faithful, $\lambda$ must be a cocone over $\Pi_\pos J(\kappa)$.
Finally, since $\kappa$ is universal over $\Pi_\pos J(\kappa)$, there is a universal map $u : c \to d$ with
\[
\lambda = \Delta_u \circ \kappa.
\]
Therefore, $\kappa$ is universal over $A$, so $F$ reflects anticolimits.
\end{proof}

Therefore, if a functor $F : \cat C \to \cat D$ is directly or indirectly nice, we can compute anticolimits in the category $\cat C$ in terms of anticolimits in the category $\cat D$.
In particular, given an anticolimit $A$ in $\cat D$, we can construct an anticolimit in $\cat C$ by finding a lift of the form
\[
\begin{tikzcd}
\max \pos J \dar[hook] \rar["X"] & \cat C \dar["F"] \\
\pos J \rar["A"'] \urar[dashed] & \cat D  
\end{tikzcd}
\]
Note that a particular anticolimit in $\cat D$ may fail to have such a lift.
However, any lift is a valid anticolimit in $\cat C$ and \emph{all} anticolimits in $\cat C$ can be obtained in this way, for any fixed choice of $F : \cat C \to \cat D$.
If $F$ is indirectly nice, this construction only works if, a priori, the sink had anticolimits in $\cat C$, but that is easy to check using \Cref{thm:canonical-anticolimit}.
Finally, if $F$ is monic, then it suffices to find a lift of the form
\[
\begin{tikzcd}
& \cat C \dar[tail, "F"] \\
\pos J \rar["A"'] \urar[dashed] & \cat D  
\end{tikzcd}
\]
i.e.\ any lift automatically extends $X$, and the lift is unique if it exists.

\subsection{Anticolimits of finite ordinals}

We will now show how to compute anticolimits in $\Ord$ in terms of anticolimits in $\Set$, using the result from the previous section.

\begin{lemma}
The inclusion $\Ord \hookrightarrow \Pos$ preserves colimits.
\end{lemma}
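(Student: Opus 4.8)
The plan is to exploit that the inclusion $i : \Ord \hookrightarrow \Pos$ is full and faithful, and to pin down the relevant $\Pos$\-colimit purely through its maps into finite total orders. Concretely, let $D : \cat I \to \Ord$ be a diagram of some small shape admitting a colimit $L = \colim_\Ord D$ with colimiting cocone $\kappa : D \Rightarrow \Delta_L$. Since $\Pos$ is cocomplete, $iD$ has a colimit $P = \colim_\Pos(iD)$ with colimiting cocone $\lambda$, and the cocone $i\kappa$ induces a comparison $\phi : P \to iL$ with $\phi \circ \lambda = i\kappa$. I would reduce the lemma to showing $\phi$ is an isomorphism: then $i\kappa = \phi \circ \lambda$ is again a colimiting cocone, which is exactly the claim that $i$ preserves this colimit.

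The main step is to show that precomposition with $\phi$ is a bijection
\[
\phi^* : \Pos(iL, \ord m) \xrightarrow{\ \sim\ } \Pos(P, \ord m) \qquad\text{for every } \ord m \in \Ord .
\]
This is where the hypothesis is used. By full faithfulness, a cocone $iD \Rightarrow \Delta_{\ord m}$ in $\Pos$ is the very same data as a cocone $D \Rightarrow \Delta_{\ord m}$ in $\Ord$; so a map $P \to \ord m$ corresponds, by the universal property of $P$, to such a cocone, which corresponds, by the universal property of $L$, to a unique map $L \to \ord m$, and one checks this composite correspondence is precisely precomposition with $\phi$. I should emphasise that $\Ord$ is \emph{not} closed under colimits in $\Pos$ (for instance $\ord 1 + \ord 1$ does not exist in $\Ord$), so there is no route through ``compute the $\Pos$\-colimit and observe it is a total order''; the assumed existence of $L$ genuinely does the work here. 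I expect this universal\-property chase to be the only slightly fiddly part of the argument.

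Finally I would specialise to $\ord m = \ord 2$. Monotone maps $P \to \ord 2$ are the same as up\-sets of $P$, so $\phi^{-1}(-)$ is a bijection from the up\-sets of $L$ to the up\-sets of $P$. From this I extract three facts: (i) $\phi$ reflects the order, since given $\phi(a) \le \phi(b)$ we may write the principal up\-set $\upper a \subseteq P$ as $\phi^{-1}(V)$ for an up\-set $V \subseteq L$, whence $\phi(a) \in V$, so $\phi(b) \in V$, so $b \in \upper a$; (ii) $\phi$ is injective, immediately from (i); and (iii) $\phi$ is surjective, since an element $\ell \in L$ outside the image of $\phi$ would force the two distinct up\-sets $\upper\ell$ and $\upper\ell \setminus \set{\ell}$ of $L$ to have equal $\phi$\-preimage, contradicting injectivity of the preimage map. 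A monotone bijection that reflects the order has a monotone inverse, so $\phi$ is an isomorphism in $\Pos$, which completes the proof. Everything outside the middle step is routine.
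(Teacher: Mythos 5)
Your proof is correct, but it takes a genuinely different route from the paper's. The paper verifies the universal property directly: given a colimiting cocone $\eta : F \Rightarrow \Delta_{\ord n}$ in $\Ord$ and an arbitrary cocone $\lambda : F \Rightarrow \Delta_P$ into a poset $P$, it invokes Szpilrajn's order extension principle to embed $P$ into a linear extension $L$, obtains the mediating map $\ord n \to L$ from the universal property in $\Ord$, and then recovers a map into $P$ by intersecting over all linear extensions. You instead form the comparison map $\phi : \colim_\Pos(iD) \to i(\colim_\Ord D)$ and show it is an isomorphism by testing against the single object $\ord 2$: full faithfulness plus the two universal properties give a bijection on up\-sets via $V \mapsto \phi^{-1}(V)$, from which order\-reflection, injectivity, and surjectivity of $\phi$ all follow, and a monotone order\-reflecting bijection of posets is an isomorphism. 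All the individual steps check out (in particular $\upper\ell \setminus \set{\ell}$ is indeed an up\-set, and the identification of the composite bijection with $\phi^*$ is the standard uniqueness argument from $\phi \circ \lambda = i\kappa$). The trade\-off: your argument needs cocompleteness of $\Pos$ to even form $\phi$, but its test objects never leave $\Ord$ and it avoids the order extension principle entirely --- which sidesteps a point the paper glosses over, namely that a linear extension of an arbitrary cocone target $P$ need not be a \emph{finite} ordinal and hence need not live in $\Ord$ at all. In that respect your version is arguably the more watertight of the two.
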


\begin{proof}
Suppose that we have a universal cocone in $\Ord$:
\[
\eta : F \Rightarrow \Delta_\ord n
\]
We want to show that it is also a universal in $\Pos$, so let $\lambda : F \Rightarrow \Delta_P$ be a cocone for a poset $P$.
Now $P$ has a linear extension $f : P \hookrightarrow L$ by the order extension principle~\cite{Szp30}.
Then we have a cocone in $\Ord$:
\[
\lambda_L \coloneqq \Delta_f \circ \lambda : F \Rightarrow \Delta_L
\]
Hence, there exists a universal map $u_L : \ord n \to L$ with $\lambda_L = \Delta_{u_L} \circ \eta$.
Also by the order extension principle, $P$ is the intersection of all its linear extensions $L$, so we can construct a map $u : \ord n \to P$ such that $\lambda = \Delta_u \circ \eta$.
Therefore, $\eta$ is indeed a universal cocone in $\Pos$.
\end{proof}

\begin{corollary}
The inclusion $\Ord \hookrightarrow \Pos$ is directly nice.
\end{corollary}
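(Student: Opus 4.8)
The plan is to unwind the definition of \emph{directly nice}, which for a functor asks for three things: that it be faithful, that it preserve $\pos J$-colimits, and that it reflect them. Faithfulness of the inclusion $\Ord \hookrightarrow \Pos$ is immediate, since $\Ord$ is a full subcategory of $\Pos$: a morphism between finite ordinals in $\Pos$ is by definition a monotone map, which is exactly a morphism of $\Ord$. Preservation of colimits is precisely the content of the preceding lemma, so nothing further is needed there.

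It remains to establish reflection. First I would fix a cocone $\kappa : A \Rightarrow \Delta_{\ord n}$ in $[\pos J, \Ord]$ whose image under the inclusion is a colimit cocone in $\Pos$, and check that it is already a colimit cocone in $\Ord$. The key point is that the apex $\ord n$ is itself an object of $\Ord$, so a competing cocone in $\Ord$ can be compared with $\kappa$ directly. Explicitly, given any cocone $\lambda : A \Rightarrow \Delta_{\ord m}$ in $\Ord$, I would regard it as a cocone in $\Pos$; since the image of $\kappa$ is a colimit there, there is a unique monotone map $u : \ord n \to \ord m$ with $\lambda = \Delta_u \circ \kappa$ in $\Pos$. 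Because $\ord n$ and $\ord m$ are finite ordinals, $u$ is automatically a morphism of $\Ord$, the factorisation $\lambda = \Delta_u \circ \kappa$ transfers to $\Ord$ by faithfulness of the inclusion, and uniqueness of $u$ in $\Ord$ follows from its uniqueness in $\Pos$. Hence $\kappa$ is universal in $\Ord$, so the inclusion reflects $\pos J$-colimits, completing the verification that it is directly nice.

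I do not anticipate a genuine obstacle here. The only step needing a moment's care is the reflection argument, where one must notice that its hypothesis already places the colimiting apex in $\Ord$, and that the mediating map obtained from the universal property in $\Pos$ lands in $\Ord$ for free — both of which hold simply because the objects involved are finite ordinals.
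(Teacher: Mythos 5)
Your proposal is correct and matches the paper's argument: the paper simply notes that the inclusion is fully faithful (hence reflects colimits) and preserves colimits by the preceding lemma, while you spell out the standard reflection argument for fully faithful functors inline, using fullness to see that the mediating map in $\Pos$ already lies in $\Ord$. No gap; your version is just a more explicit unwinding of the same two facts.
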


\begin{proof}
The functor is fully faithful, so it reflects colimits.
Also it preserves colimits by the previous lemma.
\end{proof}

This means that we can compute anticolimits in $\Ord$ in terms of anticolimits in $\Pos$ by lifting each anticolimit in $\Pos$ as follows:
\[
\begin{tikzcd}
& \Ord \dar[hook] \\
\pos J \rar["A"'] \urar[dashed] & \Pos
\end{tikzcd}
\]
This simply consists of checking whether the diagram $A$ lies in $\Ord$.

\begin{lemma}
The forgetful functor $\Pre \to \Set$ is indirectly nice.
\end{lemma}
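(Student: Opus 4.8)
To prove that the forgetful functor $U : \Pre \to \Set$ is indirectly nice, I need to verify three things from the definition: that $U$ is faithful, that it preserves $\pos J$\-colimits, and that $\Pre$ has pullbacks which $U$ preserves.

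Faithfulness is immediate, since a monotone map is determined by its underlying function. For preservation of pullbacks: the pullback of monotone maps $f : P \to R$ and $g : Q \to R$ is computed on the underlying set $P \times_R Q = \set{(x,y) \mid f(x) = g(y)}$, equipped with the componentwise preorder $(x,y) \leq (x',y') \iff x \leq x' \text{ and } y \leq y'$; one checks directly that this preorder makes the projections monotone and satisfies the universal property in $\Pre$, and its underlying set is the pullback in $\Set$. So $\Pre$ has pullbacks and $U$ preserves them.

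The main content is preservation of colimits. Given a diagram $F : \pos J \to \Pre$ with colimit cocone $\eta : F \Rightarrow \Delta_P$ in $\Pre$, I must show the underlying cocone is a colimit in $\Set$. The underlying set of $P$ is necessarily the colimit $Q = \colim UF$ in $\Set$ (this is the standard construction: a quotient of the disjoint union), carrying the finest preorder making all the $\eta_j$ monotone; this is the preorder generated by declaring $\eta_j(x) \leq \eta_j(x')$ whenever $x \leq x'$ in some $Fj$. The key point is that forgetting this preorder recovers exactly $Q$ as a set, and the universal property in $\Set$ holds because any cocone $\lambda : UF \Rightarrow \Delta_S$ in $\Set$ factors uniquely through $Q$ set-theoretically, and we do not need to check monotonicity. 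Hence $U$ preserves colimits. I expect the mild subtlety here is just to state clearly that the colimit preorder exists and that its underlying set is the $\Set$\-colimit — this is routine but worth spelling out.

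Combining these: $U$ is faithful, preserves colimits, and $\Pre$ has pullbacks preserved by $U$, so $U$ is indirectly nice by definition.
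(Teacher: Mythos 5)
Your proof is correct, but it takes a different route from the paper. The paper disposes of the whole lemma in one line via an adjoint functor argument: the forgetful functor $U : \Pre \to \Set$ has a left adjoint (the discrete preorder functor) and a right adjoint (the indiscrete preorder functor), hence preserves all limits and all colimits, and faithfulness is obvious. You instead verify the three conditions by hand: you construct pullbacks in $\Pre$ explicitly (set-theoretic pullback with the componentwise preorder) and you construct colimits in $\Pre$ explicitly ($\Set$-colimit equipped with the preorder generated by the images of the relations), checking in each case that the underlying set is the corresponding (co)limit in $\Set$. Both arguments are valid. The paper's is shorter and yields more at once (preservation of \emph{all} limits and colimits, not just pullbacks and $\pos J$-shaped colimits); yours has the advantage of exhibiting the concrete descriptions of pullbacks and colimits in $\Pre$, which is essentially what one needs anyway when actually computing anticolimits of preorders later in the paper. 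One presentational quibble: in your colimit paragraph you assert up front that ``the underlying set of $P$ is necessarily the colimit $Q = \colim UF$ in $\Set$'' before justifying it; the clean order of argument is to first build the candidate colimit $(Q, \leq)$ in $\Pre$, verify its universal property there, and then invoke uniqueness of colimits to conclude that any colimit cocone in $\Pre$ has underlying cocone isomorphic to the $\Set$-colimit cocone. This is exactly what you go on to do, so it is a matter of exposition rather than a gap.
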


\begin{proof}
Clearly it is faithful.
Also it preserves limits and colimits, since it has a left adjoint (sending each set to its discrete preorder\footnotemark{}) and a right adjoint (sending each set to its indiscrete preoder\footnotemark{}).
\end{proof}

\footnotetext{The \emph{discrete preorder} on a set $X$ is the preorder $\leq$ given by $x \leq x$ for all $x \in X$.}
\footnotetext{The \emph{indiscrete preorder} on a set $X$ is the preorder $\leq$ given by $x \leq y$ for all $x, y \in X$.}

This means that we can compute anticolimits in $\Pre$ in terms of anticolimits in $\Set$ by lifting each anticolimit in $\Set$ as follows:
\[
\begin{tikzcd}
\max \pos J \dar[hook] \rar["X"] & \Pre \dar \\
\pos J \rar["A"'] \urar[dashed] & \Set
\end{tikzcd}
\]
This consists of choosing preorders on $A i$ for $i$ non-maximal that are compatible with each other and the preorders on $X i$ for $i$ maximal.

Finally, consider the inclusion $\Pos \hookrightarrow \Pre$.
It does not preserve colimits, however it has a left adjoint which does:\footnotemark{}
\[
L : \Pre \to \Pos
\]
Hence, we can compute anticolimits in $\Pos$ in terms of anticolimits in $\Pre$ via the following procedure, for every sink $\kappa$ in $\Pos$:
\begin{enumerate}
\item Choose a lift $\widetilde{\kappa}$ of $\kappa$ along $L$.
\item Choose an anticolimit $A$ of $\widetilde{\kappa}$ in $\Pre$.
\item Therefore $L A$ is an anticolimit of $\kappa$ in $\Pos$.
\end{enumerate}
Note that \emph{all} anticolimits in $\Pos$ can be obtained in this way, for the following reason.
If $A$ is an anticolimit of $\kappa$ in $\Pos$, then $A$ has a colimit $\widetilde{\kappa}$ in $\Pre$, and moreover $L \widetilde{\kappa} = \kappa$ since $L$ preserves colimits.

\footnotetext{This sends every preorder $P$ to the poset $P / \sim$ given by $a \sim b$ iff $a \leq b$ and $b \leq a$.}

\section{Zigzag category} \label{sec:zigzags}

The zigzag category construction yields a simple inductive definition of the terms of the theory of associative $n$-categories, known as \emph{$n$-diagrams}.
Following Reutter and Vicary~\cite{RV19}, we review the basic elements of the theory. This section is mostly background, although some of the presentation is novel.

\begin{definition}
In a category $\cat C$, a \emph{zigzag} $X$ is a finite sequence of cospans:
\[
\begin{tikzcd}[column sep=small]
X(\reg 0) \rar & X(\sing 0) & X(\reg 1) \lar \rar & \cdots & X(\reg n) \lar
\end{tikzcd}
\]
This has \emph{length} $\abs{X} = n \geq 0$.
The objects $X(\sing i)$ for $i \in \ord n$ are called the \emph{singular objects}, and $X(\reg j)$ for $j \in \ord{n + 1}$ the \emph{regular objects}.
\end{definition}

To define a notion of morphisms between zigzags, we first recall the duality between ordinals and intervals due to Wraith~\cite{Wra93}:
\[
\Reg : \Ord \xrightarrow{\simeq} \Int^\mathrm{op}
\]
This maps each $\ord n$ to $\ord{n + 1}$ and each monotone map $f : \ord n \to \ord m$ in $\Ord$ to the monotone map $\Reg f : \ord{m + 1} \to \ord{n + 1}$ in $\Int$ given by
\[
(\Reg f)(i) \coloneqq \min(\set{j \in \ord n \mid f(j) \geq i} \cup \set{n}).
\]
This has a nice geometrical intuition, which is illustrated in \Cref{fig:interleaving}: the map $f$ going up the page is ``interleaved'' with the map $\Reg f$ going down the page.
We can obtain the definition of a zigzag map from this picture, by instantiating the arrows with actual morphisms.

\begin{figure}
\centering
\begin{tikzcd}[column sep=small]
\color{blue}{\bullet} \ar[dr, blue] & \color{red}{\bullet} & \color{blue}{\bullet} \ar[dr, blue] & \color{red}{\bullet} & \color{blue}{\bullet} \ar[dl, blue] & \color{red}{\bullet} & \color{blue}{\bullet} \ar[dr, blue] & \color{red}{\bullet} & \color{blue}{\bullet} \ar[dl, blue] \\
& \color{blue}{\bullet} & \color{red}{\bullet} \ar[ul, red] & \color{blue}{\bullet} & \color{red}{\bullet} \ar[ur, red] & \color{blue}{\bullet} & \color{red}{\bullet} \ar[ul, red] & \color{blue}{\bullet}
\end{tikzcd}
\caption{An interleaving of $\color{red} f : \ord 3 \to \ord 4$ in $\Ord$ going up, and $\color{blue} \Reg f : \ord 5 \to \ord 4$ in $\Int$ going down.}
\label{fig:interleaving}

\end{figure}

\begin{definition}
In a category $\cat C$, a \emph{zigzag map} $f : X \to Y$ between a zigzag $X$ of length $n$ and and a zigzag $Y$ of length $m$ consists of the following:
\begin{itemize}
\item a \emph{singular map} in $\Ord$,
\[
f_\mathsf{s} : \ord n \to \ord m
\]
with an implied \emph{regular map} in $\Int$,
\[
f_\mathsf{r} \coloneqq \Reg f_\mathsf{s} : \ord{m + 1} \to \ord{n + 1}
\]
\item for every $i \in \ord n$, a \emph{singular slice}
\[
f(\sing i) : X(\sing i) \to Y(\sing f_\mathsf{s}(i))
\]
\item for every $j \in \ord{m + 1}$, a \emph{regular slice}
\[
f(\reg j) : X(\reg f_\mathsf{r}(j)) \to Y(\reg j)
\]
\end{itemize}
These must satisfy the following conditions for every $i \in \ord m$:
\begin{itemize}
\item if $f_\mathsf{s}^{-1}(i)$ is empty, this diagram commutes:
\[
\begin{tikzcd}[column sep=small]
Y(\reg i) \rar & Y(\sing i) & Y(\reg i + 1) \lar \\
& X(\reg f_\mathsf{r}(i)) \ular["f(\reg i)"] \urar["f(\reg i + 1)"'] &
\end{tikzcd}
\]
\item if $f_\mathsf{s}^{-1}(i) = [a, b]$ is non-empty, these diagrams commute:
\[
\begin{tikzcd}[column sep=small]
Y(\reg i) \rar & Y(\sing i) \\
X(\reg a) \rar \uar["f(\reg i)"] & X(\sing a) \uar["f(\sing a)"']
\end{tikzcd}
\qquad
\begin{tikzcd}[column sep=small]
Y(\sing i) & Y(\reg i + 1) \lar \\
X(\sing b) \uar["f(\sing b)"] & X(\reg b + 1) \lar \uar["f(\reg i + 1)"']
\end{tikzcd}
\]
and the following diagram commutes for every $a \leq j < b$:
\[
\begin{tikzcd}[column sep=small]
& Y(\sing i) & \\
X(\sing j) \urar["f(\sing j)"] & X(\reg j + 1) \lar \rar & X(\sing j + 1) \ular["f(\sing j + 1)"']
\end{tikzcd}
\]
\end{itemize}
These unique composites are called \emph{diagonal slices} and are denoted by $f(\reg j, \sing i) : X(\reg j) \to Y(\sing i)$, where $i \in \ord m$ and $j \in f_\mathsf{r}([i, i + 1])$.
\end{definition}

\begin{figure}
\centering
\begin{tikzcd}[column sep=small]
Y(\reg 0) \rar & Y(\sing 0) & Y(\reg 1) \lar \rar & Y(\sing 1) & Y(\reg 2) \lar \rar & Y(\sing 2) & Y(\reg 3) \lar \rar & Y(\sing 3) & Y(\reg 4) \lar \\
& X(\reg 0) \rar \ar[ul, blue] & X(\sing 0) \ar[ul, red] & X(\reg 1) \lar \rar \ar[ul, blue] \ar[ur, blue] & X(\sing 1) \ar[ur, red] & X(\reg 2) \lar \rar & X(\sing 2) \ar[ul, red] & X(\reg 3) \lar \ar[ul, blue] \ar[ur, blue]
\end{tikzcd}
\caption{A zigzag map $f : X \to Y$ going up with the singular slices in red and regular slices in blue.}
\end{figure}
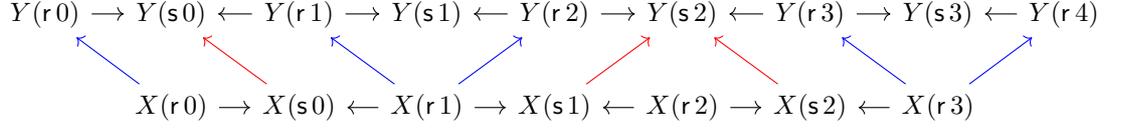

Given a category $\cat C$, we define its \emph{zigzag category} $\Zig(\cat C)$ to be the category whose objects are zigzags and whose morphisms are zigzag maps in $\cat C$.
This is functorial, i.e.\ we have a functor
\[
\Zig : \Cat \to \Cat
\]
If we iterate $n$ times, we obtain the \emph{$n$-fold zigzag category} $\Zig^n(\cat C)$.
 
Note that $\Zig(\ord 1)$ is isomorphic to $\Ord$, since a zigzag in the terminal category is uniquely determined by its length.
Since every category $\cat C$ has a unique functor $\cat C \to \ord 1$, this naturally induces a functor
\[
\pi : \Zig(\cat C) \to \Ord
\]
sending a zigzag to its length and a zigzag map to its singular map.

\paragraph{Globularity}
The original definition of zigzag maps~\cite{RV19} required all regular slices to be identities, a condition we drop here.
We call such zigzag maps \emph{globular}, and we write $\Zig_=(\cat C)$ for the wide subcategory of globular maps.

\begin{definition}
Let $X$ be a zigzag of length $n$.
Its \emph{restriction} $X[a, b]$, for $a, b \in \ord{n + 1}$ with $a \leq b$, is defined to be the zigzag restricted to the cospans between the regular objects $X(\reg a)$ and $X(\reg b)$.
\end{definition}

\begin{definition}
Let $f : X \to Y$ be a zigzag map with $Y$ of length $m$, and let $a, b \in \ord{m + 1}$ with $a < b$.
We define the \emph{restriction}
\[
f[a, b] : X[c, d] \to Y[a, b]
\]
for $[c, d] = f_\mathsf{r}([a, b])$, to be the zigzag map restricted to everything between $f(\reg a) : X(\reg c) \to Y(\reg a)$ and $f(\reg b) : X(\reg d) \to Y(\reg b)$.
\end{definition}

\subsection{Diagrams}

Let $\Sigma$ be a set of labels equipped with a dimension function
\[
\dim : \Sigma \to \mathbb{N}
\]
This induces a partial order given by $f < g$ iff $\dim f < \dim g$.

\begin{definition}
A \emph{$\Sigma$-typed $n$-diagram} is an object of $\Zig^n_=(\Sigma)$.
\end{definition}

We think of $\Sigma$ as encoding a signature and the objects of $\Zig^n_=(\Sigma)$ as $n$\-dimensional string diagrams in the free associative $n$\-category generated by that signature.
Not all $n$-diagrams are valid, and the full theory of associative $n$-categories has a \emph{typechecking} procedure for checking if an $n$-diagram is \emph{well-typed} with respect to a signature.
For a detailed description, see~\cite[Chapter 8]{Dor18} and~\cite[Section 7]{HRV22}.

\begin{example}
Consider the 2-category generated by:
\begin{itemize}
\item a 0-cell $x$,
\item a 1-cell $f : x \to x$, and
\item a 2-cell $m : f \circ f \to f$ (i.e.\ a monoid).
\end{itemize}
The 2-cell $m$ is represented by the following 2-diagram:
\begin{center}
\begin{tikzpicture}

\node (R0R0) at (0, 0) {$x$};
\node (R0S0) at (1, 0) {$f$};
\node (R0R1) at (2, 0) {$x$};
\node (R0S1) at (3, 0) {$f$};
\node (R0R2) at (4, 0) {$x$};

\node (S0R0) at (0, 1) {$x$};
\node (S0S0) at (2, 1) {$m$};
\node (S0R1) at (4, 1) {$x$};

\node (R1R0) at (0, 2) {$x$};
\node (R1S0) at (2, 2) {$f$};
\node (R1R1) at (4, 2) {$x$};

\draw[->] (R0R0) to (R0S0);
\draw[->] (R0R1) to (R0S0);
\draw[->] (R0R1) to (R0S1);
\draw[->] (R0R2) to (R0S1);

\draw[->] (S0R0) to (S0S0);
\draw[->] (S0R1) to (S0S0);

\draw[->] (R1R0) to (R1S0);
\draw[->] (R1R1) to (R1S0);

\draw[->] (R0R0) to (S0R0);
\draw[->] (R0S0) to (S0S0);
\draw[->] (R0S1) to (S0S0);
\draw[->] (R0R2) to (S0R1);

\draw[->] (R1R0) to (S0R0);
\draw[->] (R1S0) to (S0S0);
\draw[->] (R1R1) to (S0R1);

\end{tikzpicture}
\end{center}
where $\Sigma = \set{x, f, m}$ with $\dim x = 0$, $\dim f = 1$, and $\dim m = 2$.
\end{example}

\begin{example}
Consider the 3-category generated by:
\begin{itemize}
\item a 0-cell $x$, and
\item a pair of 2-cells $\alpha, \beta : \id x \to \id x$ (i.e.\ two scalars).
\end{itemize}
The \emph{Eckmann-Hilton move} is the 3-cell represented by the following 3-diagram, which can be interpreted as ``braiding $\alpha$ over $\beta$'':
\begin{center}
\begin{tikzpicture}[scale=0.75]

\node (R0R0R0) at (1, 0) {$x$};

\node (R0S0R0) at (0, 1) {$x$};
\node (R0S0S0) at (1, 1) {$\alpha$};
\node (R0S0R1) at (2, 1) {$x$};

\node (R0R1R0) at (1, 2) {$x$};

\node (R0S1R0) at (0, 3) {$x$};
\node (R0S1S0) at (1, 3) {$\beta$};
\node (R0S1R1) at (2, 3) {$x$};

\node (R0R2R0) at (1, 4) {$x$};

\draw (R0R0R0) to (R0S0R0);
\draw (R0R0R0) to (R0S0R1);

\draw (R0S0R0) to (R0S0S0);
\draw (R0S0R1) to (R0S0S0);

\draw (R0R1R0) to (R0S0R0);
\draw (R0R1R0) to (R0S0R1);

\draw (R0R1R0) to (R0S1R0);
\draw (R0R1R0) to (R0S1R1);

\draw (R0S1R0) to (R0S1S0);
\draw (R0S1R1) to (R0S1S0);

\draw (R0R2R0) to (R0S1R0);
\draw (R0R2R0) to (R0S1R1);

\node (S0R0R0) at (6, 1) {$x$};

\node (S0S0R0) at (4, 2) {$x$};
\node (S0S0S0) at (5, 2) {$\alpha$};
\node (S0S0R1) at (6, 2) {$x$};
\node (S0S0S1) at (7, 2) {$\beta$};
\node (S0S0R2) at (8, 2) {$x$};

\node (S0R1R0) at (6, 3) {$x$};

\draw (S0R0R0) to (S0S0R0);
\draw (S0R0R0) to (S0S0R1);
\draw (S0R0R0) to (S0S0R2);

\draw (S0S0R0) to (S0S0S0);
\draw (S0S0R1) to (S0S0S0);
\draw (S0S0R1) to (S0S0S1);
\draw (S0S0R2) to (S0S0S1);

\draw (S0R1R0) to (S0S0R0);
\draw (S0R1R0) to (S0S0R1);
\draw (S0R1R0) to (S0S0R2);

\node (R1R0R0) at (11, 0) {$x$};

\node (R1S0R0) at (10, 1) {$x$};
\node (R1S0S0) at (11, 1) {$\beta$};
\node (R1S0R1) at (12, 1) {$x$};

\node (R1R1R0) at (11, 2) {$x$};

\node (R1S1R0) at (10, 3) {$x$};
\node (R1S1S0) at (11, 3) {$\alpha$};
\node (R1S1R1) at (12, 3) {$x$};

\node (R1R2R0) at (11, 4) {$x$};

\draw (R1R0R0) to (R1S0R0);
\draw (R1R0R0) to (R1S0R1);

\draw (R1S0R0) to (R1S0S0);
\draw (R1S0R1) to (R1S0S0);

\draw (R1R1R0) to (R1S0R0);
\draw (R1R1R0) to (R1S0R1);

\draw (R1R1R0) to (R1S1R0);
\draw (R1R1R0) to (R1S1R1);

\draw (R1S1R0) to (R1S1S0);
\draw (R1S1R1) to (R1S1S0);

\draw (R1R2R0) to (R1S1R0);
\draw (R1R2R0) to (R1S1R1);

\draw[->] (2.75, 2) to (3.25, 2);
\draw[->] (9.25, 2) to (8.75, 2); 

\end{tikzpicture}
\end{center}
where $\Sigma = \set{x, \alpha, \beta}$ with $\dim x = 0$, $\dim \alpha = 2$, and $\dim \beta = 2$.
\end{example}

\subsection{Explosion}

Given a diagram in the zigzag category $\Zig(\cat C)$, we can ``explode'' it to obtain a diagram in $\cat C$.
This can be understood as the left adjoint of the zigzag construction, a result which is inspired by the work of Heidemann~\cite{Hei23}, however the presentation  here is original.

\begin{definition}
Given a category $\cat C$ with a functor $F : \cat C \to \Ord$, we define its \emph{explosion} $\Exp_F(\cat C)$ to be the category consisting of:
\begin{itemize}
\item for every $x$ in the fibre of $n$, objects
\begin{align*}
&\sing^x_i &i &\in \ord n \\
&\reg^x_j &j &\in \ord{n + 1}
\end{align*}
\item for every $f : x \to y$ in the fibre of $\alpha : \ord n \to \ord m$, morphisms
\begin{align*}
\sing^x_i &\xrightarrow{\sing^f_i} \sing^y_{\alpha(i)} &i &\in \ord n \\
\reg^x_{\Reg \alpha(j)} &\xrightarrow{\reg^f_j} \reg^y_j &j &\in \ord{m + 1} \\
\reg^x_j &\xrightarrow{\delta^f_{i, j}} \sing^y_i &i &\in \ord m, \, j \in \Reg \alpha([i, i + 1])
\end{align*}
\end{itemize}
\end{definition}

\begin{figure}
\centering
\begin{tikzpicture}

\node (a) at (-1, 0) {$\ord 3$};
\node (b) at (-1, 1) {$\ord 4$};

\draw[->] (a) to (b);

\node (a_r0) at (2, 0) {$\bullet$};
\node (a_s0) at (3, 0) {$\bullet$};
\node (a_r1) at (4, 0) {$\bullet$};
\node (a_s1) at (5, 0) {$\bullet$};
\node (a_r2) at (6, 0) {$\bullet$};
\node (a_s2) at (7, 0) {$\bullet$};
\node (a_r3) at (8, 0) {$\bullet$};

\node at (0, 0.5) {$\mapsto$};

\draw[->] (a_r0) to (a_s0);
\draw[->] (a_r1) to (a_s0);
\draw[->] (a_r1) to (a_s1);
\draw[->] (a_r2) to (a_s1);
\draw[->] (a_r2) to (a_s2);
\draw[->] (a_r3) to (a_s2);

\node (b_r0) at (1, 1) {$\bullet$};
\node (b_s0) at (2, 1) {$\bullet$};
\node (b_r1) at (3, 1) {$\bullet$};
\node (b_s1) at (4, 1) {$\bullet$};
\node (b_r2) at (5, 1) {$\bullet$};
\node (b_s2) at (6, 1) {$\bullet$};
\node (b_r3) at (7, 1) {$\bullet$};
\node (b_s3) at (8, 1) {$\bullet$};
\node (b_r4) at (9, 1) {$\bullet$};

\draw[->] (b_r0) to (b_s0);
\draw[->] (b_r1) to (b_s0);
\draw[->] (b_r1) to (b_s1);
\draw[->] (b_r2) to (b_s1);
\draw[->] (b_r2) to (b_s2);
\draw[->] (b_r3) to (b_s2);
\draw[->] (b_r3) to (b_s3);
\draw[->] (b_r4) to (b_s3);

\draw[->] (a_r0) to (b_r0);
\draw[->] (a_s0) to (b_s0);
\draw[->] (a_r1) to (b_r1);
\draw[->] (a_r1) to (b_r2);
\draw[->] (a_s1) to (b_s2);
\draw[->] (a_s2) to (b_s2);
\draw[->] (a_r3) to (b_r3);
\draw[->] (a_r3) to (b_r4);

\end{tikzpicture}
\caption{A diagram $F : \ord 2 \to \Ord$ together with its explosion $\Exp_F(\ord 2)$.}
\end{figure}

\begin{proposition}
We have an adjunction
\[
\Exp : \Cat / \Ord \rightleftarrows \Cat : \Zig
\]
\end{proposition}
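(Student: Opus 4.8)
The plan is to establish $\Exp\dashv\Zig$ directly as a natural hom-set bijection. First observe that $\Zig:\Cat\to\Cat$ lifts to a functor $\Cat\to\Cat/\Ord$: it sends $\cat D$ to the length functor $\pi:\Zig(\cat D)\to\Ord$, and a functor $\cat D\to\cat D'$ to the induced functor on zigzag categories, which commutes with $\pi$ because it alters neither lengths nor singular maps. With this understood, it suffices to produce, for every object $(F:\cat C\to\Ord)$ of $\Cat/\Ord$ and every category $\cat D$, a bijection
\[
\Cat\big(\Exp_F(\cat C),\,\cat D\big)\;\cong\;(\Cat/\Ord)\big((F:\cat C\to\Ord),\,(\pi:\Zig(\cat D)\to\Ord)\big)
\]
natural in $\cat C$, $F$ and $\cat D$; the adjunction then follows formally (equivalently, one may package the same data as a unit and counit and check the triangle identities).

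Unwinding the right-hand side, a morphism over $\Ord$ is a functor $G:\cat C\to\Zig(\cat D)$ with $\pi G=F$: it assigns to each $x$ in the fibre of $n=Fx$ a zigzag $Gx$ of length $n$, and to each $f:x\to y$ over $\alpha:=Ff$ a zigzag map $Gx\to Gy$ with singular map $\alpha$; by definition the latter is a family of singular slices $Gx(\sing i)\to Gy(\sing\alpha i)$, regular slices $Gx(\reg{\Reg\alpha(j)})\to Gy(\reg j)$, and derived diagonal slices $Gx(\reg j)\to Gy(\sing i)$ for $j\in\Reg\alpha([i,i+1])$, subject to the cospan-commutation axioms. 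This indexing matches the generating morphisms $\sing^f_i$, $\reg^f_j$, $\delta^f_{i,j}$ of $\Exp_F(\cat C)$ exactly, which yields the correspondence. Given $H:\Exp_F(\cat C)\to\cat D$, put $Gx(\sing i):=H(\sing^x_i)$, $Gx(\reg j):=H(\reg^x_j)$ with cospan legs $H(\delta^{\id_x}_{i,i})$ and $H(\delta^{\id_x}_{i,i+1})$, and let $Gf$ be the zigzag map with slices $H(\sing^f_i)$ and $H(\reg^f_j)$; conversely, given $G$, send $\sing^x_i,\reg^x_j$ to the corresponding objects of $Gx$ and $\sing^f_i,\reg^f_j,\delta^f_{i,j}$ to the singular, regular and diagonal slices of $Gf$. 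One then checks: (i) $Gf$ is a bona fide zigzag map, its cospan-commutation axioms being precisely the $H$-images of the composition relations of $\Exp_F(\cat C)$ among the $\sing^f$, $\reg^f$, $\delta^f$ and $\delta^{\id}$, and its diagonal slices then coincide with $H(\delta^f_{i,j})$; (ii) $G$ is functorial because $H$ is, using that $\Reg$ is contravariant so the index arithmetic matches; (iii) conversely, the assignment $H$ attached to a $G$ is functorial because the composition law of $\Exp_F(\cat C)$ is generated by exactly the relations that hold among slices of composable zigzag maps, and it preserves identities since the diagonal slices of an identity zigzag map are its cospan legs. The two assignments are visibly mutually inverse.

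The main obstacle is bookkeeping step (i): pinning down the composition structure of $\Exp_F(\cat C)$ — which the definition specifies only through its generating morphisms — and matching it on the nose with the zigzag-map axioms, including the case split according to whether $f_\sing^{-1}(i)$ is empty or an interval $[a,b]$, and the fact that a diagonal slice of a composite has two a priori distinct descriptions (factoring through a singular slice of the second map, or through a regular slice of the first) that must be forced to agree. Once this is settled, naturality in $\cat D$ is immediate, since both sides are ``postcompose with $\cat D\to\cat D'$'', and naturality in $(F:\cat C\to\Ord)$ reduces to checking that a morphism in $\Cat/\Ord$ induces a relabelling functor between explosions compatible with the slice-renaming above. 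The unit $F\to\Zig(\Exp_F(\cat C))$ is then the ``canonical exploded diagram'' and the counit $\Exp_\pi(\Zig(\cat D))\to\cat D$ is evaluation, with the triangle identities holding automatically.
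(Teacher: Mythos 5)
Your proposal is correct and follows essentially the same route as the paper: the paper also establishes the adjunction by exhibiting the explicit hom-set bijection between functors $\cat J \to \Zig(\cat C)$ over $\Ord$ and functors $\Exp(\cat J) \to \cat C$, defined generator-by-generator via the singular, regular, and diagonal slices (with the cospan legs recovered as diagonal slices of identities, exactly as you do). The paper simply asserts that the remaining verification is routine, whereas you spell out the bookkeeping obligations more explicitly; the substance is the same.
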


\begin{proof}
For every $\cat J \to \Ord$ and $\cat C$, we define a bijection
\begin{prooftree}
\AxiomC{$F : \cat J \to \Zig(\cat C)$}
\UnaryInfC{$\smallint F : \Exp(\cat J) \to \cat C$}
\end{prooftree}
Given a diagram $F$, we define the diagram $\smallint F$ as follows:
\begin{align*}
\smallint F(\sing^x_i) &\coloneqq F(x)(\sing i) \\
\smallint F(\reg^x_j) &\coloneqq F(x)(\reg j) \\
\smallint F(\sing^f_i) &\coloneqq F(f)(\sing i) \\
\smallint F(\reg^f_j) &\coloneqq F(f)(\reg j) \\
\smallint F(\delta^f_{i, j}) &\coloneqq F(f)(\reg j, \sing i)
\end{align*}
It is easy to check this is a natural isomorphism, so $\Exp \dashv \Zig$.
\end{proof}

Explosion can be iterated to also explode diagrams in $\Zig^n(\cat C)$:
\begin{prooftree}
\AxiomC{$F : \cat J \to \Zig^n(\cat C)$}
\UnaryInfC{$\smallint^k F : \Exp^k(\cat J) \to \Zig^{n - k}(\cat C)$}
\end{prooftree}
We call this the \emph{$k$-fold explosion}.
In particular, note that:
\begin{itemize}
\item If $X$ is an $n$-zigzag, seen as a functor $\ord 1 \to \Zig^n(\cat C)$, its $n$-fold explosion $\Exp^n(\ord 1) \to \cat C$ is $X$ seen as a diagram in $\cat C$.
\item If $f$ is an $n$-zigzag map, seen as a functor $\ord 2 \to \Zig^n(\cat C)$, its $n$-fold explosion $\Exp^n(\ord 2) \to \cat C$ is $f$ seen as a diagram in $\cat C$.
\end{itemize}

\subsection{Contraction}

Recall the definition of contraction by Reutter and Vicary~\cite{RV19}.

\begin{definition}
Let $X$ be a zigzag in $\cat C$.
We define its \emph{contraction} to be the zigzag map arising from taking the colimit in $\cat C$, if it exists, of the underlying diagram $\smallint X : \Exp(\ord 1) \to \cat C$, i.e.
\[
\begin{tikzcd}[column sep=small]
& X(\reg 0) \rar & C & X(\reg n) \lar & \\
X(\reg 0) \urar[equal] \rar & X(\sing 0) \urar & \cdots \lar \rar & X(\sing n - 1) \ular & X(\reg n) \lar \ular[equal]
\end{tikzcd}
\]
If the colimit does not exist, then the contraction is not defined.
\end{definition}

Reutter and Vicary have also given a procedure which computes connected colimits in $\Zig_=(\cat C)$.
This allows contraction to work on $n$-diagrams for every $n$.
There are three main steps to this procedure, which can be stated as the following technical lemmas. 

\begin{lemma}
\label{lma:colimit-projection}
If $\cat C$ is a disjoint union of categories, each having a terminal object, then $\pi : \Zig_=(\cat C) \to \Ord$ preserves connected colimits.
\end{lemma}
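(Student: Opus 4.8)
The plan is to verify the universal property of the cocone $\pi\kappa$ directly in $\Ord$, where $(X,\kappa)=\colim F$ (we assume this colimit exists, since preservation is the claim). First I would reduce to the case that $\cat C$ has a terminal object $\ast$: every connected component of $\cat C$ is one of the given summands, hence has a terminal object, and the component of $\cat C$ containing the objects of a zigzag is invariant under globular zigzag maps --- such a map transports a singular object when the source has positive length, and when the source has length $0$ it forces every regular object of the target to agree with the single object of the source. So the connected diagram $F$, together with any colimiting cocone over it, lives in $\Zig_=$ of a single component.

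Now fix a cocone $\mu : \pi F \Rightarrow \Delta_{\ord p}$ in $\Ord$; I would lift it to $\Zig_=(\cat C)$ via a ``padded'' zigzag $W$ of length $p$. Its regular objects are forced to be $W(\reg\ell) := Fj(\reg{\Reg\mu_j(\ell)})$; this is independent of $j$, since for any $h : j\to j'$ in $\pos J$, applying $\Reg$ to the cocone identity $\mu_{j'}\circ F(h)_\sing = \mu_j$ and then using globularity of $F(h)$ (which gives $Fj(\reg{F(h)_\reg(m)}) = Fj'(\reg m)$) shows the two candidate values coincide, and $\pos J$ is connected. Take all singular objects of $W$ to be $\ast$, with the unique maps as cospan legs; then for each $j$ there is a globular zigzag map $\nu_j : Fj\to W$ with singular map $\mu_j$, the unique singular slices into $\ast$, and identity regular slices (well-typed by the definition of $W$), and its coherence conditions hold automatically because $W$ has terminal singular objects. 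Since $\nu : F\Rightarrow\Delta_W$ is a cocone, the universal property of $X$ supplies a unique $\phi : X\to W$ with $\phi\circ\kappa_j = \nu_j$, and $\pi\phi = \phi_\sing$ factors $\mu$ through $\pi\kappa$.

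For uniqueness, let $\psi : \pi X\to\ord p$ also satisfy $\psi\circ(\kappa_j)_\sing = \mu_j$ for all $j$. The key point is that $\psi$ lifts automatically: applying $\Reg$ to this identity and using globularity of $\kappa_j$ gives $W(\reg\ell) = Fj(\reg{(\kappa_j)_\reg(\Reg\psi(\ell))}) = X(\reg{\Reg\psi(\ell)})$, so there is a globular zigzag map $\tilde\psi : X\to W$ with singular map $\psi$, unique singular slices into $\ast$, and identity regular slices. Among globular maps $Fj\to W$, a map is determined by its singular map alone, so $\tilde\psi\circ\kappa_j = \nu_j$ for every $j$; hence $\tilde\psi = \phi$ by uniqueness in the colimit, and $\psi = \phi_\sing$. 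The remaining case $p=0$ forces every $Fj$ and $X$ to have length $0$, and is immediate. Therefore $\pi\kappa$ is colimiting, so $\pi$ preserves connected colimits.

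I expect the uniqueness step to be the main obstacle: it is not clear a priori that an arbitrary monotone map compatible with the singular maps $(\kappa_j)_\sing$ extends to a morphism out of $X$ in $\Zig_=(\cat C)$. The reason it does is precisely that globularity of the colimiting cocone pins each regular object of $X$ to one inherited from the diagram $F$, which is exactly what makes the lift $\tilde\psi$ well-typed --- and, since $\pi$ has no right adjoint in general, the argument genuinely needs this input rather than being purely formal. The terminal objects are used only to supply the padding zigzag $W$.
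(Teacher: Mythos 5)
Your argument is correct, but it is worth noting that the paper does not actually prove this lemma: it simply cites Proposition~39 of Reutter--Vicary, so your proposal is a self-contained alternative to an outsourced proof. The two key ideas you supply both check out. First, the ``padding'' construction: given a cocone $\mu$ in $\Ord$, the zigzag $W$ with terminal singular objects and regular objects $W(\reg \ell) = Fj(\reg{\Reg\mu_j(\ell)})$ is well defined (your computation $\Reg\mu_j = F(h)_{\mathsf r}\circ\Reg\mu_{j'}$ combined with globularity of $F(h)$, plus connectedness of the indexing poset, is exactly what is needed), and the resulting $\nu_j$ are genuine globular zigzag maps because every coherence triangle has terminal codomain. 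Second, and more delicately, the uniqueness step: you correctly identify that an arbitrary monotone map $\psi$ compatible with the $(\kappa_j)_{\mathsf s}$ must be \emph{lifted} to a zigzag map $X\to W$ before the universal property of $X$ can be invoked, and that globularity of $\kappa$ is what makes the identity regular slices of $\tilde\psi$ typecheck; the observation that globular maps into $W$ are determined by their singular maps then closes the argument. The preliminary reduction to a single component and the degenerate case $p=0$ (where the universal map into the length-$0$ zigzag forces $\abs{X}=0$) are also handled adequately. What your approach buys is independence from the machinery of~\cite{RV19}; what the citation buys the paper is consistency with the source of the other two lemmas in this cluster (\Cref{lma:colimit-restriction,lma:colimit-explosion}), which are also derived from results in that reference.
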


\begin{proof}
See~\cite[Proposition 39]{RV19}.
\end{proof}

\begin{lemma}
\label{lma:colimit-restriction}
Consider a connected diagram $F : \cat J \to \Zig_=(\cat C)$ and a cocone $\eta : F \Rightarrow \Delta_X$ such that $\pi \circ \eta$ is a colimit in $\Ord$.
Then $\eta$ is a colimit iff for every $i \in \abs{X}$, the following cocone is a colimit:
\[
\eta^i : F^i \Rightarrow \Delta_{X[i, i + 1]} : \cat J \to \Zig_=(\cat C)
\]
with each $\eta^i_x : F^i_x \to X[i, i + 1]$ given by the restriction $\eta_x[i, i + 1]$.
\end{lemma}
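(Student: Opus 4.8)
The plan is to characterise ``$\eta$ is a colimit'' through existence and uniqueness of factorisations of arbitrary test cocones $\mu : F \Rightarrow \Delta_Y$, and to exploit the hypothesis that $\pi \circ \eta$ is a colimit in $\Ord$ in order to fix the ``shape'' of any such factorisation, so that only the singular-slice data remains free --- and this is exactly what the cocones $\eta^i$ govern. Two preliminary remarks underpin everything. First, since a globular zigzag map is the identity on first and last regular objects, a connected globular diagram forces all of its first (resp.\ last) regular objects to agree; because each $\eta_x$ is globular, each $F^i_x$ then has first regular object $X(\reg i)$ and last regular object $X(\reg{i+1})$, so any cocone over $F^i$ has as apex a cospan whose two regular objects are $X(\reg i)$ and $X(\reg{i+1})$. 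Second, applying $\pi$ to a test cocone $\mu : F \Rightarrow \Delta_Y$ and using that $\pi \circ \eta$ is a colimit in $\Ord$ produces a \emph{unique} monotone $s : \abs X \to \abs Y$ with $\Delta_s \circ (\pi\circ\eta) = \pi\circ\mu$; hence the singular map of any factorisation $h : X \to Y$ of $\mu$ through $\eta$ is forced to be $s$, its regular map is $\Reg s$, comparing regular objects through $\eta$ and $\mu$ shows $X$ and $Y$ have matching regular objects in the pattern dictated by $\Reg s$ so that all regular slices of $h$ are forced to be identities, and only the singular slices $h(\sing i) : X(\sing i) \to Y(\sing{s(i)})$ remain to be chosen, subject to the zigzag-map coherence conditions.

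For the forward implication, assume $\eta$ is a colimit and fix $i \in \abs X$; let $\nu : F^i \Rightarrow \Delta_W$ be a cocone. By the first remark $W$ has the correct boundary regular objects, so we may form the zigzag $\widetilde X$ obtained from $X$ by substituting $W$ for the $i$-th cospan $X[i,i+1]$, together with the cocone $\widetilde\eta : F \Rightarrow \Delta_{\widetilde X}$ that agrees with $\eta$ except that the singular slices landing in position $i$ are replaced by those of $\nu$. One checks $\widetilde\eta$ is a cocone, so universality of $\eta$ gives a unique $h : X \to \widetilde X$ with $\Delta_h \circ \eta = \widetilde\eta$; applying $\pi$ forces $h_{\mathsf s} = \id$, whence $h$ restricts to a zigzag map $h[i,i+1] : X[i,i+1] \to W$, and restricting the identity $\Delta_h \circ \eta = \widetilde\eta$ to the $i$-th cospan of $\widetilde X$ yields $\Delta_{h[i,i+1]} \circ \eta^i = \nu$. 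Uniqueness of this factorisation follows by extending any competitor back to a map $X \to \widetilde X$ which is the identity on every other cospan and then invoking uniqueness of $h$.

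For the backward implication, assume every $\eta^i$ is a colimit and let $\mu : F \Rightarrow \Delta_Y$ be a cocone, with $s$ and $\Reg s$ as above. For each $i$, the diagonal slices of the maps $\mu_x$ at the first and last regular objects of $F^i_x$ (which are $X(\reg i)$ and $X(\reg{i+1})$) are independent of $x \in \cat J$, by naturality of $\mu$ together with globularity, and therefore assemble into the two legs of a cospan $W^i = \bigl(X(\reg i) \to Y(\sing{s(i)}) \leftarrow X(\reg{i+1})\bigr)$; restricting $\mu$ then gives a cocone $\mu^i : F^i \Rightarrow \Delta_{W^i}$. Universality of $\eta^i$ produces a unique globular map $h^i : X[i,i+1] \to W^i$ with $\Delta_{h^i} \circ \eta^i = \mu^i$, and we set $h(\sing i) \coloneqq h^i(\sing 0)$. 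The leg conditions satisfied by the $h^i$, together with the observation that the legs of $W^i$ and $W^{i-1}$ meeting at $X(\reg i)$ coincide (they are literally the same diagonal slice of $\mu$), show that these singular slices fit together into a genuine zigzag map $h : X \to Y$ with $h_{\mathsf s} = s$; a final diagram chase gives $\Delta_h \circ \eta = \mu$, and uniqueness of $h$ follows from uniqueness of each $h^i$.

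Both directions are conceptually straightforward but carry a substantial amount of bookkeeping, which I expect to be the main obstacle. Concretely, the delicate points are: checking that the restriction $F^i$ is a well-defined diagram, that the substituted cocone $\widetilde\eta$ and the restricted cocones $\mu^i$ genuinely satisfy the zigzag-map axioms and are natural over $\cat J$; and, in the backward direction, verifying the coherence conditions of $h$ at \emph{every} regular object of $X$ --- including those at which $s$ is constant, where several cospans of $X$ lie over a single cospan of $Y$ and the actual gluing happens, and those at which $s$ jumps or skips a value. The one genuinely non-obvious construction is the choice of the auxiliary cospans $W^i$ in the backward direction, whose legs must be extracted from diagonal slices of $\mu$ rather than read off directly from $Y$.
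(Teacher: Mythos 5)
The paper does not actually prove this lemma: its entire proof is the single sentence ``This follows from~\cite[Theorem 33]{RV19}'', so there is no in\-paper argument to measure yours against. Your proposal is a self\-contained direct proof, and its strategy is sound --- it is essentially the argument the citation is standing in for. The key moves are the right ones: use the hypothesis that $\pi \circ \eta$ is a colimit in $\Ord$ to force the singular map of any factorisation (hence its regular map, hence, by globularity and the matching of regular objects, all its regular slices), so that only singular\-slice data remains free; then show this residual data is governed exactly by the cocones $\eta^i$, by splicing a test cocone over $F^i$ into $X$ in the forward direction, and by restricting a test cocone over $F$ to auxiliary cospans $W^i$ built from diagonal slices and regluing in the backward direction. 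What your route buys is independence from the external reference and an explicit account of where globularity and connectedness are used; what the paper's route buys is brevity.

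Two points deserve tightening, neither of which is a genuine gap. First, in the forward direction the claim that applying $\pi$ forces $h_{\mathsf{s}} = \id$ is only literally true when the test apex $W$ has length $1$; in general $\abs{\widetilde X} = \abs{X} - 1 + \abs{W}$, and $h_{\mathsf{s}}$ is instead forced to be the evident map that is an order\-isomorphism away from position $i$ and sends $i$ into the block occupied by $W$. To see that $h_{\mathsf{s}}(i)$ is pinned down to a single value you need that the colimit cocone $\pi \circ \eta$ is jointly surjective (epimorphisms in $\Ord$ are surjections), so that position $i$ is hit by some $\eta_{x,\mathsf{s}}$. Your conclusion --- that $h$ restricts to a map $X[i, i+1] \to W$ with $\Delta_{h[i,i+1]} \circ \eta^i = \nu$ --- survives unchanged. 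Second, in the backward direction, the gluing of the $h^i$ into a zigzag map $h : X \to Y$ needs, at the endpoints of each fibre $s^{-1}(k) = [a, b]$, that the outer legs of $W^a$ and $W^b$ coincide with the structural maps $Y(\reg k) \to Y(\sing k) \leftarrow Y(\reg k + 1)$ of $Y$ itself; this does follow from the definition of diagonal slices together with globularity of the $\mu_x$, but it is a separate check from the interior matching of adjacent $W^i$ that you do mention, and it is exactly what makes $h$ satisfy the boundary conditions of a zigzag map. With those two points spelled out, the bookkeeping you flag is routine.
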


\begin{proof}
This follows from~\cite[Theorem 33]{RV19}.
\end{proof}

\begin{lemma}
\label{lma:colimit-explosion}
Consider a connected diagram $F : \cat J \to \Zig_=(\cat C)$ and a cocone $\eta : F \Rightarrow \Delta_X$ such that $\pi \circ \eta$ is a colimit in $\Ord$ and $\abs{X} = 1$.
Then $\eta$ is a colimit iff the following cocone is a colimit in $\cat C$:
\[
\smallint \eta : \smallint F \Rightarrow \Delta_{X(\sing 0)} : \Exp(\cat J) \to \cat C
\]
where the components are given by the singular and diagonal slices:
\begin{align*}
\smallint \eta(\sing^x_i) &\coloneqq F_x(\sing i) \xrightarrow{\eta_x(\sing i) } X(\sing 0) \\
\smallint \eta(\reg^x_j) &\coloneqq F_x(\reg j) \xrightarrow{\eta_x(\reg j, \sing 0)} X(\sing 0)
\end{align*}
\end{lemma}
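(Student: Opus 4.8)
The plan is to prove the biconditional by a direct comparison of the two universal properties, after isolating the two rigidity phenomena that the hypotheses $\abs X = 1$ and ``$\pi\circ\eta$ is a colimit in $\Ord$'' produce. First I would record that $\smallint\eta$ is genuinely a cocone over $\smallint F$: each of its triangles is one of the commuting squares in the definition of the zigzag map $\eta_x$ or a naturality square of $\eta$, so this is immediate. Next, the two rigidity facts. \emph{(Boundary rigidity.)} Since $\abs X = 1$, each $(\eta_x)_\sing : \abs{F_x} \to \ord 1$ is constant, so globularity of $\eta_x$ forces its extreme regular slices to be identities, giving $F_x(\reg 0) = X(\reg 0)$ and $F_x(\reg{\abs{F_x}}) = X(\reg 1)$ for every $x$; moreover the extreme regular slices of each $F(f)$ are identities, so in $\Exp(\cat J)$ all objects $\reg^x_0$ are linked by identity-valued morphisms, as are all objects $\reg^x_{\abs{F_x}}$. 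Because $\cat J$ is connected, any cocone $\mu : \smallint F \Rightarrow \Delta_c$ therefore has a well-defined left leg $\mu(\reg^x_0) : X(\reg 0) \to c$ and right leg $\mu(\reg^x_{\abs{F_x}}) : X(\reg 1) \to c$, independent of $x$. \emph{(Shape rigidity.)} For an arbitrary cocone $\lambda : F \Rightarrow \Delta_Y$ in $\Zig_=(\cat C)$, the cocone $\pi\circ\lambda$ factors through the colimit $\pi\circ\eta$; since $(\eta_x)_\sing$ is constant, $(\lambda_x)_\sing$ is forced to be constant at a single position $k \in \abs Y$, the same for all $x$. A globular factorisation $u : X \to Y$ of $\lambda$ through $\eta$ then has no freedom except its one singular slice $u(\sing 0) : X(\sing 0) \to Y(\sing k)$: its singular map is pinned to $k$, its regular slices must be identities, and the equalities $Y(\reg j) = X(\reg 0)$ for $j \le k$ and $Y(\reg j) = X(\reg 1)$ for $j > k$ (needed for those identities to typecheck) are exactly what validity of the globular legs $\lambda_x$ already guarantees.

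With these in hand I would set up a translation of cocones. To a cocone $\lambda : F \Rightarrow \Delta_Y$ with common singular position $k$ I associate the cocone $\mu^\lambda : \smallint F \Rightarrow \Delta_{Y(\sing k)}$ with $\mu^\lambda(\sing^x_i) := \lambda_x(\sing i)$ and $\mu^\lambda(\reg^x_j) := \lambda_x(\reg j, \sing k)$; the cocone equations for $\mu^\lambda$ are precisely the commuting conditions packaging the diagonal slices of the $\lambda_x$ together with the naturality of $\lambda$. Conversely, to a cocone $\mu : \smallint F \Rightarrow \Delta_c$ I associate the length-$1$ zigzag $Y^\mu := \bigl(X(\reg 0)\to c\leftarrow X(\reg 1)\bigr)$, with cospan legs given by the left and right legs of $\mu$ from boundary rigidity, together with the cocone $\lambda^\mu : F \Rightarrow \Delta_{Y^\mu}$ whose legs have constant singular map, identity regular slices, and singular slices $\lambda^\mu_x(\sing i) := \mu(\sing^x_i)$. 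Checking that each $\lambda^\mu_x$ is a well-formed globular zigzag map, and that $\lambda^\mu$ is natural, is a termwise unwinding: every required commuting square is an instance of a cocone equation for $\mu$, and the degenerate case $\abs{F_x} = 0$ (where $F_x$ collapses to $X(\reg 0) = X(\reg 1)$) is handled by the same equations. One then verifies $\mu^{\lambda^\mu} = \mu$, so that every cocone of $\smallint F$ arises in this way.

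Finally I would match factorisations and conclude. For a fixed $\lambda$ (with position $k$), shape rigidity gives a bijection between globular factorisations $u : X \to Y$ of $\lambda$ through $\eta$ and morphisms $v : X(\sing 0) \to Y(\sing k)$ with $v \circ \smallint\eta = \mu^\lambda$, sending $u$ to $u(\sing 0)$ and $v$ to the globular map with that singular slice; that the reconstructed $u$ is a valid zigzag map and actually factors $\lambda$ reduces, at each index, to the defining equation of $v$ together with boundary rigidity. Hence $\eta$ is universal iff for every cocone of $\smallint F$ of the form $\mu^\lambda$ there is a unique factorisation through $\smallint\eta$; and since by the previous paragraph \emph{every} cocone of $\smallint F$ is of this form, this says exactly that $\smallint\eta$ is universal.

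I expect the main obstacle to be the verification, in the reverse translation, that the reconstructed legs $\lambda^\mu_x$ satisfy all the zigzag-map commuting conditions and that $\lambda^\mu$ is natural, and, more subtly, pinning down the factorisation bijection for a cocone $\lambda$ whose apex $Y$ has length $> 1$, where one must use that validity of the globular legs $\lambda_x$ already collapses the relevant cospan maps of $Y$. The two uses of the hypotheses — connectedness of $\cat J$ for the well-definedness of the boundary legs, and ``$\pi\circ\eta$ a colimit in $\Ord$'' for shape rigidity — are the delicate points and should be flagged explicitly, since the lemma fails without them.
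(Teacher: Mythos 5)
Your argument is essentially correct, but it takes a very different route from the paper: the paper's entire proof of this lemma is a citation to Theorem 33 of Reutter--Vicary (LICS 2019), which establishes in general how connected colimits in $\Zig_=(\cat C)$ decompose over the base $\Ord$ and the fibres; \Cref{lma:colimit-restriction,lma:colimit-explosion} are just the two halves of that result specialised to the situation at hand. What you have done is reconstruct the length-one case of that theorem from first principles. Your two rigidity observations are exactly the right levers: globularity plus $\abs X = 1$ pins the boundary regular objects of every $F_x$ to $X(\reg 0)$ and $X(\reg 1)$ and makes the boundary legs of any cocone over $\smallint F$ well defined (this is where connectedness of $\cat J$ is genuinely used, and it is good that you flagged it), while the hypothesis that $\pi\circ\eta$ is a colimit forces every competing cocone $\lambda$ to sit over a single singular position $k$ of its apex, reducing a factorisation $X \to Y$ to the single datum $u(\sing 0)$. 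The one place where a careless reader could stumble --- checking that the zigzag-map axioms for the reconstructed factorisation at positions $i \neq k$ hold, which requires that the cospan legs of $Y$ outside position $k$ already collapse --- is correctly discharged by your remark that the globular legs $\lambda_x$ impose exactly those collapses (using that at least one $F_x$ has positive length, which follows from the colimit of $\pi\circ F$ being $\ord 1$). The trade-off is the usual one: the paper's approach buys brevity and consistency with the existing \textsc{homotopy.io} literature at the cost of opacity, while yours makes the mechanism visible and self-contained at the cost of a page of diagram-chasing; as a verification of the lemma your proof is sound, and would serve well as an expanded appendix proof if the authors wished to make the paper independent of the citation.
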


\begin{proof}
This follows from~\cite[Theorem 33]{RV19}.
\end{proof}

\noindent
Therefore, the colimit of a connected diagram $F : \cat J \to \Zig_=(\cat C)$ is obtained by taking a colimit in $\Ord$, say $n$, and then an $\ord n$-indexed family of colimits in $\cat C$, namely the colimits of the explosions $\smallint F^i$.
If any of these colimits do not exist, neither does the colimit of $F$.

\section{Anticontraction} \label{sec:anticontraction}

We  now define the dual of contraction, called \emph{anticontraction}.
Just as contraction is based on colimits, this is based on anticolimits.

\begin{definition}
For $n \in \mathbb{N}$, define the following hypergraph:
\[
V \coloneqq \ord{n + 1} \qquad E \coloneqq \ord n \qquad i(e) = \set{e, e + 1}
\]
Let $\pos J_n$ be the associated poset, which we can sketch as follows:
\[
\begin{tikzcd}[column sep=small]
\bullet && \cdots && \bullet \\
& \bullet \ular \urar && \bullet \ular \urar &
\end{tikzcd}
\]
\end{definition}

\begin{definition}
Let $X$ be a zigzag map of length 1 in $\cat C$ together with a sink in $\cat C$ into the unique singular object of $X$:
\[
\set{A_i \to X(\sing 0)}_{i \in \ord{n + 1}}
\]
An \emph{anticontraction} of $X$ is a zigzag map whose singular slices are $\set{A_i \to X(\sing 0)}$ such that it is a contraction of its source, i.e.\
\[
\begin{tikzcd}[column sep=small]
& X(\reg 0) \rar & X(\sing 0) & X(\reg 1) \lar & \\
X(\reg 0) \rar \urar[equal] & A_0 \urar & \cdots \lar \rar & A_n \ular & X(\reg 1) \lar \ular[equal]
\end{tikzcd}
\]
In particular, an anticontraction consists of the following data:
\begin{itemize}
\item a $\pos J_n$-anticolimit of the sink $\set{A_i \to X(\sing 0)}$,
\item a lift of $X(\reg 0) \to X(\sing 0)$ along the morphism $A_0 \to X(\sing 0)$,
\item a lift of $X(\reg 1) \to X(\sing 0)$ along the morphism $A_n \to X(\sing 0)$.
\end{itemize}
\end{definition}

\subsection{Zigzag anticolimits}

We give a procedure for computing anticolimits in $\Zig_=(\cat C)$, which is analogous to the procedure for colimits by Reutter and Vicary.

\begin{theorem}
Let $\cat C$ be a disjoint union of categories, each having a terminal object.
Then for every sink of the following form
\[
\kappa : X \Rightarrow \Delta_Y : \max \pos J \to \Zig_=(\cat C)
\]
for a connected poset $\pos J$, we have an embedding of categories
\[
\Acl_\pos J(\kappa) \hookrightarrow \coprod\nolimits_{F \in \Acl_\pos J(\pi \circ \kappa)} \prod\nolimits_{i \in \abs{Y}} \Acl_{\Exp_{F^i}(\pos J)}(\smallint \kappa^i)
\]
i.e.\ $\pos J$-anticolimits of $\kappa$ are uniquely determined by:
\begin{itemize}
\item a $\pos J$-anticolimit $F$ of $\pi \circ \kappa$ in $\Ord$, and
\item for every $i \in \abs{Y}$, a $\Exp_{F^i}(\pos J)$-anticolimit of $\smallint \kappa^i$ in $\cat C$.
\end{itemize}
Note that $\smallint \kappa^i$ is defined as in \Cref{lma:colimit-restriction,lma:colimit-explosion}, and $F^i : \pos J \to \Ord$ is the restriction of $F : \pos J \to \Ord$ to the preimages of $i$ under $\pi \circ \kappa$.
\end{theorem}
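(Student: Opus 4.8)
The plan is to reproduce, for anticolimits, the three-step computation of connected colimits in $\Zig_=(\cat C)$ afforded by \Cref{lma:colimit-projection,lma:colimit-restriction,lma:colimit-explosion}: project to $\Ord$, restrict over each singular level of $Y$, and explode. Concretely, I would define the embedding $\Phi$ on an object $A \in \Acl_\pos J(\kappa)$ as follows. Set $F \coloneqq \pi \circ A : \pos J \to \Ord$; since $A$ extends $X$ we have that $F$ extends $\pi \circ X$, and since $\kappa$ is a colimit over $A$, the poset $\pos J$ is connected, and $\pi$ preserves connected colimits (\Cref{lma:colimit-projection}), the cocone $\pi \circ \kappa$ is a colimit over $F$; hence $F \in \Acl_\pos J(\pi \circ \kappa)$ and $\Phi(A)$ is placed in the summand indexed by this $F$. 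Then, for each $i \in \abs{Y}$, let $A^i : \pos J \to \Zig_=(\cat C)$ be obtained by applying the restriction operation on zigzags and zigzag maps pointwise over $\pos J$ (along the interval $f_{\reg}([i,i+1])$ cut out by the relevant leg), and take the $i$-th coordinate of $\Phi(A)$ to be the explosion $\smallint A^i : \Exp_{F^i}(\pos J) \to \cat C$.

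The substance is in verifying that each $\smallint A^i$ is an honest anticolimit of $\smallint \kappa^i$. Since $\pi \circ \kappa$ is already a colimit in $\Ord$, \Cref{lma:colimit-restriction} tells us that $\kappa$ is a colimit over $A$ iff every restricted cocone $\kappa^i$ is a colimit over $A^i$; and because $\abs{Y[i,i+1]} = 1$, \Cref{lma:colimit-explosion} tells us that $\kappa^i$ is a colimit over $A^i$ iff $\smallint \kappa^i$ is a colimit over $\smallint A^i$. What remains is to see that $\smallint A^i$ extends the domain of $\smallint \kappa^i$, which rests on the observation that the regular objects of an explosion are never maximal, so that $\max(\Exp_{F^i}(\pos J))$ is exactly the set of singular objects lying over $\max \pos J$, on which $\smallint A^i$ agrees with the domain of $\smallint \kappa^i$ because $A$ extends $X$; one should also check that $\Exp_{F^i}(\pos J)$ is indeed a poset (antisymmetry follows from that of $\pos J$) and treat the degenerate fibres of length $0$. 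On morphisms, a map $\theta : A \to A'$ in $\Acl_\pos J(\kappa)$ is a natural transformation trivial on $\max \pos J$; projecting, restricting and exploding it functorially yields natural transformations $\smallint \theta^i$, trivial on the relevant maximal parts, which assemble into a morphism of the codomain — in particular $\pi \circ \theta$ comes out trivial, so source and target sit in the same summand.

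That $\Phi$ is an embedding — injective on objects and faithful — follows by reconstruction. With $F$ fixed, the adjunction $\Exp \dashv \Zig$ makes explosion a bijection between diagrams $\pos J \to \Zig_=(\cat C)$ lying over $F^i$ and diagrams $\Exp_{F^i}(\pos J) \to \cat C$, and likewise on natural transformations, so $(\smallint A^i)_{i}$ determines $(A^i)_i$; and $A$ is recovered from $(A^i)_i$ by concatenating the restrictions fibrewise, since for each $x$ the intervals $f_{\reg}([i,i+1])$ partition $[0, \abs{F_x}]$ and consecutive restrictions literally share their boundary regular objects. The same reconstruction shows why $\Phi$ is only an embedding and not an isomorphism: an arbitrary $F$ together with arbitrary anticolimits $\smallint A^i$ need not glue, since nothing forces the last regular object of $\smallint A^i$ over a given $x$ to equal the first regular object of $\smallint A^{i+1}$ over $x$.

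I expect the main obstacle to be organisational: arranging the pointwise restriction and explosion so that all shapes and maximal parts line up — including the length-$0$ fibres and the assertion that $\Exp_{F^i}(\pos J)$ is a poset with the expected maximal objects — and transporting the colimit lemmas of \cite{RV19}, which are phrased for colimits of connected diagrams, into the precise biconditional form required to move anticolimit data between $\Zig_=(\cat C)$ and $\cat C$. The morphism-level point that a morphism of $\pos J$-anticolimits has trivial underlying $\Ord$-component also deserves attention, since it is what justifies describing the codomain as a coproduct of categories.
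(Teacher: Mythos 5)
Your proposal takes essentially the same route as the paper's proof: project along $\pi$ using \Cref{lma:colimit-projection} to land in a summand $F \in \Acl_\pos J(\pi\circ\kappa)$, then restrict and explode using \Cref{lma:colimit-restriction,lma:colimit-explosion} to obtain each $\smallint A^i$, and argue injectivity by reconstructing $A$ from $(F,(\smallint A^i)_i)$. You in fact supply more detail than the paper does on the injectivity, the maximal objects of $\Exp_{F^i}(\pos J)$, and the morphism-level check that $\pi\circ\theta$ is trivial, all of which is consistent with the paper's argument.
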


\begin{proof}
Let $A$ be an anticolimit of $\kappa$ in $\Zig_=(\cat C)$.
Then $F = \pi \circ A$ is an anticolimit of $\pi \circ \kappa$ in $\Ord$ by \Cref{lma:colimit-projection}.
Moreover, for every $i \in \abs{Y}$, the explosion of the restriction $\smallint A^i : \Exp_{F^i}(\pos J) \to \cat C$ is an anticolimit of $\smallint \kappa^i$ in $\cat C$ by \Cref{lma:colimit-restriction,lma:colimit-explosion}.
Hence, the map
\[
A \mapsto (F, (\smallint A^i)_{i \in \abs{Y}})
\]
is a well-defined map of the desired type.
The map is injective, as if $A \neq B$ then either $\pi \circ A \neq \pi \circ B$ or $\smallint A^i \neq \smallint B^i$ for some $i$.
It is easy to see that this extends to a functor, and hence an embedding.
\end{proof}

While the embedding may fail to be epic, we can detect when something is in its image.
Suppose  we are given the following:
\[
F \in \Acl_\pos J(\pi \circ \kappa) \qquad G_i \in \Acl_{\Exp_{F^i}(\pos J)}(\smallint \kappa^i)
\]
We can attempt to construct $A \in \Acl_\pos J(\kappa)$ as follows:
\begin{enumerate}
\item Unexplode each $G_i$ to get a diagram $A_i : \pos J \to \Zig(\cat C)$.
\item Check that $A_i$ is globular, otherwise fail.
\item Then $\kappa^i$ is a colimit of $A_i$ by \Cref{lma:colimit-explosion}, so by globularity, every zigzag in $A_i$ has the same first and last regular objects as $Y[i, i + 1]$, i.e.\ $Y(\reg i)$ and $Y(\reg i + 1)$ respectively.
Therefore, we can concatenate the $A_i$ together to get a diagram
\[
A : \pos J \to \Zig_=(\cat C)
\]
with $A^i = A_i$. Then by \Cref{lma:colimit-restriction}, $\kappa$ is a colimit of $A$.
\end{enumerate}

Therefore, anticolimits in $\Zig_=(\cat C)$ can be computed in terms of anticolimits in $\Ord$ and $\cat C$.
Since we know how to do anticolimits in $\Ord$, as shown in section 2, we can extract a recursive procedure for anticolimits in $\Zig^n_=(\cat C)$ in terms of anticolimits in $\cat C$, for all $n$.

\subsection{Zigzag factorisation}

We now show that if a category $\cat C$ admits a factorisation structure for sinks, we can lift it to the globular zigzag category $\Zig_=(\cat C)$.
First, recall the definition of a factorisation structure for sinks, according to~\cite[Section 3]{Cas12}, as well as an useful technical result.

\begin{definition}
Let $\mathbf{E}$ be a conglomerate of sinks in $\cat C$, and let $M$ be a class of morphisms in $\cat C$, both closed under composition with isomorphisms.
Then $(\mathbf{E}, M)$ is a \emph{factorisation structure} on $\cat C$ if:
\begin{itemize}
\item Every sink $\set{f_i : A_i \to B}_{i \in I}$ in $\cat C$ factors as $f_i = m \circ e_i$ for a sink $\set{e_i : A_i \to C}_{i \in I}$ in $\mathbf{E}$ and a morphism $m : C \to B$ in~$M$.
\item ($\mathbf{E}, M)$ has the \emph{orthogonal lifting property}, i.e.\ for every sink $\set{e_i : A_i \to B}_{i \in I}$ in $\mathbf{E}$ and morphism $m : C \to D$ in $M$, if the outer square commutes for every $i$, there is a unique lift:
\[
\begin{tikzcd}
A_i \dar["e_i"'] \rar & C \dar["m"] \\
B \rar \urar[dashed, "\exists!" description] & D
\end{tikzcd}
\]
\end{itemize}
\end{definition}

\begin{lemma}
\label{lma:factorisation-mono}
If $(\mathbf{E}, M)$ is a factorisation structure, then $M \subseteq \mathrm{Mono}$.
\end{lemma}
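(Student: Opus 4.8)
The plan is to lean on the defining property that \emph{every} sink admits an $(\mathbf{E},M)$\-factorisation, together with the essential uniqueness of such factorisations, in order to force a parallel pair equalised by an $M$\-map to coincide. The two background facts I would record first are: (a) essential uniqueness of $(\mathbf{E},M)$\-factorisations, i.e. given $f_i = m\circ e_i = m'\circ e_i'$ there is a unique isomorphism $\theta$ with $\theta\circ e_i = e_i'$ and $m'\circ\theta = m$; and (b) closure of $M$ under composition. Both are standard consequences of the orthogonal lifting property via short diagonalisation arguments — for (a), produce comparison maps in both directions and observe that each composite, alongside $\mathrm{id}$, is the unique diagonal of the evident square.

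Now suppose $m : C \to D$ lies in $M$ and $m u = m v$ for some $u, v : Z \to C$; set $w \coloneqq m u = m v$, and aim to show $u = v$. First I would factor the two\-element sink $\{u,v\} : Z \rightrightarrows C$ as an $\mathbf{E}$\-sink $\{e_1, e_2 : Z \to P\}$ followed by $p : P \to C$ in $M$, so $p e_1 = u$ and $p e_2 = v$. Postcomposing with $m$, the \emph{constant} sink $\{w,w\} : Z \rightrightarrows D$ then receives the factorisation $\{e_1,e_2\} \in \mathbf{E}$ followed by $m p \in M$ (using (b)). On the other hand, factoring the one\-element sink $\{w\} : Z \to D$ as $\{g : Z \to Q\} \in \mathbf{E}$ followed by $n : Q \to D$ in $M$ and then duplicating the leg, the same constant sink $\{w,w\}$ also receives the factorisation $\{g,g\} \in \mathbf{E}$ followed by $n$, whose two $\mathbf{E}$\-legs are literally equal. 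By (a) there is an isomorphism $\theta : P \to Q$ with $\theta e_1 = g = \theta e_2$; since $\theta$ is invertible, $e_1 = e_2$, hence $u = p e_1 = p e_2 = v$, and $m$ is monic.

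I expect the main obstacle to be the bookkeeping around constant sinks: one needs that $\{w,w\}$ admits \emph{some} factorisation whose two $\mathbf{E}$\-legs agree. This is the step where I would be careful, deriving it from the fact that $\mathbf{E}$ is closed under reindexing its legs along surjections of index sets — so a single $\mathbf{E}$\-leg duplicates to two equal legs — which is part of the standard theory of factorisation structures for sinks and is essentially equivalent in strength to the statement that every $\mathbf{E}$\-sink is jointly epimorphic; indeed $M \subseteq \mathrm{Mono}$ can alternatively be read off directly from the latter. Everything else in the argument is routine diagonalisation.
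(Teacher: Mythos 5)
The paper does not actually prove this lemma --- it simply cites \cite[Proposition 3.6]{Cas12} --- so you are supplying an argument where the paper supplies a pointer, which is welcome. Unfortunately the argument has a genuine gap, and it sits exactly where you suspected. Your conclusion $e_1 = e_2$ rests on applying \emph{essential uniqueness} of $(\mathbf{E},M)$-factorisations to the two factorisations $\bigl(\{e_1,e_2\},\, m p\bigr)$ and $\bigl(\{g,g\},\, n\bigr)$ of the constant sink $\{w,w\}$. Essential uniqueness only applies when \emph{both} candidates are $(\mathbf{E},M)$-factorisations, and the definition in the paper requires of $\mathbf{E}$ only that it be closed under composition with isomorphisms; nothing licenses the claim that duplicating the single leg of the $\mathbf{E}$-sink $\{g\}$ yields an $\mathbf{E}$-sink $\{g,g\}$. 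If you drop that assumption and run only the legitimate half of the comparison --- diagonalising the $\mathbf{E}$-sink $\{e_1,e_2\}$ against $n\in M$ with top legs $(g,g)$ and bottom $mp$ --- you obtain a map $\theta$ with $\theta e_1=g=\theta e_2$ and $n\theta=mp$, and diagonalising $\{g\}$ against $mp$ gives a one-sided inverse, so $\theta$ is only a split epimorphism. A split epimorphism identifying $e_1$ with $e_2$ after composition tells you nothing; you need $\theta$ monic, which is essentially the statement you are trying to prove.

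Your proposed repair --- that $\mathbf{E}$ is closed under reindexing its legs along surjections of index sets, and that this is ``essentially equivalent'' to $\mathbf{E}$-sinks being jointly epimorphic --- is asserted rather than derived, and neither claim follows from the definition as given. (Even the implication from joint epicity of $\mathbf{E}$-sinks to $M\subseteq\mathrm{Mono}$, which you offer as an alternative route, requires an argument.) The correct proof, as in the cited source, must extract monicity from the \emph{uniqueness} clause of the orthogonal lifting property applied to the two-legged sink $\{u,v\}$ and its factorisation, rather than from essential uniqueness of factorisations of a doubled constant sink; as it stands, your argument does not close. The surrounding scaffolding --- closure of $M$ under composition, essential uniqueness itself --- is fine and standard.
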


\begin{proof}
See~\cite[Proposition 3.6]{Cas12}.
\end{proof}

\begin{definition}
If $\mathbf{E}$ is a conglomerate of sinks and $M$ is a class of morphisms in $\cat C$, then we define the following:
\begin{itemize}
\item a sink $\set{f_i : X_i \to Y}_{i \in I}$ in $\Zig_=(\cat C)$ is \emph{$\mathbf{E}$-singular} if for every $k \in \abs{Y}$, the sink of all singular slices into $Y(\sing k)$ is in $\mathbf{E}$:
\[
\set{f_i(\sing j) : X_i(\sing j) \to Y(\sing k) \mid i \in I, \, j \in (f_i)_\mathsf{s}^{-1}(k)}
\]
\item a zigzag map is an \emph{$M$-relabelling} if it is globular, its singular map is the identity, and all its singular slices are in $M$.
\end{itemize}
\end{definition}

We write $\Sing(\mathbf{E})$ for the conglomerate of $\mathbf{E}$-singular sinks and $\Relab(M)$ for the class of $M$-relabellings.
Note that these are both closed under composition with isomorphisms, if $\mathbf{E}$ and $M$ are.

\begin{theorem}
If $(\mathbf{E}, M)$ is a factorisation structure on $\cat C$, then we have a factorisation structure $(\Sing(\mathbf{E}), \Relab(M))$ on $\Zig_=(\cat C)$.
\end{theorem}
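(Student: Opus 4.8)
The plan is to verify the two axioms of a factorisation structure for the pair $(\Sing(\mathbf{E}), \Relab(M))$ on $\Zig_=(\cat C)$ --- existence of factorisations and the orthogonal lifting property --- by working one singular level at a time and reducing each requirement to the corresponding datum in $\cat C$. The organising observation is that every defining equation of a zigzag map $f \colon X \to Y$ has a \emph{singular} object of the codomain as its target, so any identity between zigzag maps whose codomain is a relabelling can be checked after post-composing with a morphism in $M$, which is monic by \Cref{lma:factorisation-mono}.

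I would dispatch the orthogonal lifting property first, as the more mechanical half. Suppose $\set{e_i \colon X_i \to Y}$ is $\mathbf{E}$-singular, $m \colon Z \to W$ is an $M$-relabelling, and we are given $g \colon Y \to W$ and $h_i \colon X_i \to Z$ with $m \circ h_i = g \circ e_i$. Since $m$ has identity singular map and identity regular slices, any lift $l \colon Y \to Z$ is forced to have singular map $g_\mathsf{s}$ and identity regular slices, hence is determined by its singular slices $l(\sing k) \colon Y(\sing k) \to Z(\sing g_\mathsf{s}(k))$. For each $k$, the sink $\set{e_i(\sing j) \mid (e_i)_\mathsf{s}(j) = k}$ lies in $\mathbf{E}$, the slice $m(\sing g_\mathsf{s}(k))$ lies in $M$, and the slices of $g$ and the $h_i$ supply a commuting square over them, so the orthogonal lifting property of $(\mathbf{E}, M)$ in $\cat C$ yields a unique $l(\sing k)$. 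It then remains to check that the $l(\sing k)$ assemble into a genuine zigzag map and that $m \circ l = g$ and $l \circ e_i = h_i$: the zigzag-map conditions for $l$ are verified by post-composing with the monic $m(\sing g_\mathsf{s}(k))$ and reducing to the already-known conditions for $g$, while the two equations hold on singular slices by construction and on the remaining (identity) data trivially. Uniqueness of $l$ descends from uniqueness of each $l(\sing k)$.

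For the existence of factorisations, given a sink $\set{f_i \colon X_i \to Y}$ I would build an intermediate zigzag $Z$ by setting $Z(\reg j) \coloneqq Y(\reg j)$, and for each singular level $k$ taking $Z(\sing k)$ from an $(\mathbf{E}, M)$-factorisation in $\cat C$ of the sink of all singular slices of the $f_i$ landing in $Y(\sing k)$, producing an $\mathbf{E}$-sink $\set{e_{i,j}}$ into $Z(\sing k)$ and a morphism $m_k \colon Z(\sing k) \to Y(\sing k)$ in $M$. The $e_{i,j}$ become the singular slices of maps $e_i \colon X_i \to Z$ with singular map $(f_i)_\mathsf{s}$ and identity regular slices, and the $m_k$ become the singular slices of a relabelling $m \colon Z \to Y$; that these are well-defined zigzag maps and that $f_i = m \circ e_i$ is again checked by post-composing with the monic $m_k$ and reducing to the corresponding statements about $f_i$. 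By construction $\set{e_i}$ is then $\Sing(\mathbf{E})$-singular and $m$ is an $\Relab(M)$-relabelling.

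The genuinely delicate point, which I expect to be the main obstacle, is the construction of the cospan maps $Z(\reg k) \to Z(\sing k) \leftarrow Z(\reg k + 1)$ of the intermediate zigzag: since $Z(\reg k) = Y(\reg k)$ and $m_k \in M$, this requires lifting the cospan map $Y(\reg k) \to Y(\sing k)$ of $Y$ along $m_k$. When the fibre $(f_i)_\mathsf{s}^{-1}(k)$ is non-empty for some $i$, globularity of $f_i$ identifies $Y(\reg k) \to Y(\sing k)$ with the diagonal slice $f_i(\reg a, \sing k)$ for $a = \min (f_i)_\mathsf{s}^{-1}(k)$, which factors through the singular slice $f_i(\sing a)$ and hence through $m_k$, supplying the lift. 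To handle the remaining (degenerate) levels one enlarges the factored sink to also include the cospan maps of $Y$, and then argues that the singular-slice sub-sink is still in $\mathbf{E}$; this uses the standard consequence of the two axioms that $\mathbf{E}$ is exactly the class of sinks left-orthogonal to $M$, together with the fact that the adjoined morphisms factor through members already present. Everything else is a routine transport of the factorisation structure of $\cat C$ through $\Sing$ and $\Relab$.
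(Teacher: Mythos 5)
Your overall strategy coincides with the paper's: both halves are reduced levelwise to the factorisation structure on $\cat C$, with $Z(\reg j) \coloneqq Y(\reg j)$, the singular objects of $Z$ obtained by factoring the sinks of singular slices, the lifting property obtained by lifting one singular slice of the codomain at a time (with the singular map of the lift forced to be $g_\mathsf{s}$), and all coherence conditions verified after post-composition with the monic $m(\sing k)$ via \Cref{lma:factorisation-mono}. You also correctly isolate the delicate point, namely producing the cospan maps $Z(\reg k) \to Z(\sing k) \leftarrow Z(\reg k+1)$, and your treatment of the non-degenerate levels --- using globularity to identify $Y(\reg k) \to Y(\sing k)$ with a diagonal slice, which then factors through the singular slice $e_i(\sing a)$ --- is exactly the paper's definition of those cospan maps.

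The one place you diverge is the degenerate levels, where $(f_i)_\mathsf{s}^{-1}(k)$ is empty for every $i$, and there your argument does not go through as stated. You propose to adjoin the cospan maps of $Y$ to the (empty) sink of singular slices at that level, factor the enlarged sink, and then recover membership of the singular-slice sub-sink in $\mathbf{E}$ from the orthogonality characterisation, ``because the adjoined morphisms factor through members already present.'' But at a degenerate level there \emph{are} no members already present: the sub-sink you need to place in $\mathbf{E}$ is the empty sink into the newly constructed object $Z(\sing k)$, and the adjoined cospan maps cannot factor through an empty family. Left-orthogonality of the empty sink against $M$ is a strictly stronger condition than left-orthogonality of the enlarged sink, so this step fails precisely in the case it is meant to handle. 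The paper instead makes a case split: when the sink of singular slices at level $k$ is empty it sets $Z(\sing k) \coloneqq Y(\sing k)$ and $m(\sing k) \coloneqq \id$, so the cospan maps of $Z$ are simply those of $Y$ and no lifting along $m(\sing k)$ is needed. (Both treatments still implicitly require empty sinks to be admissible in $\mathbf{E}$ for the definition of $\Sing(\mathbf{E})$ to be satisfiable at such levels; that is a feature of the definitions rather than of either argument.) Replacing your enlargement trick with this case split repairs the proof; the remainder of your argument is sound and matches the paper's.
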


\begin{proof}
Given a sink of globular zigzag maps $\set{f_i : X_i \to Y}_{i \in I}$, we want to define an $\mathbf{E}$-singular sink and an $M$-relabelling
\[
\set{e_i : X_i \to Z}_{i \in I} \qquad m : Z \to Y
\]
such that $f_i = m \circ e_i$ for all $i \in I$.
In particular, we must have that
\begin{align*}
\abs{Z} &\coloneqq \abs{Y} \\
Z(\reg j) &\coloneqq Y(\reg j) \\
(e_i)_\mathsf{s} &\coloneqq (f_i)_\mathsf{s} \\
m_\mathsf{s} &\coloneqq \id
\end{align*}
For every $k \in \abs{Y}$, construct the sink of singular slices into $Y(\sing k)$:
\[
\set{f_i(\sing j) : X_i(\sing j) \to Y(\sing k) \mid i \in I, \, j \in (f_i)_\mathsf{s}^{-1}(k)} \\
\]
If this is empty, define $Z(\sing k) \coloneqq Y(\sing k)$ and $m(\sing k) \coloneqq \id$.
Otherwise, we can factor this sink in $\cat C$ as an $\mathbf{E}$-sink and an $M$-morphism:
\[
\set{e_i(\sing j) : X_i(\sing j) \to Z(\sing k) \mid i \in I, \, j \in (f_i)_\mathsf{s}^{-1}(k)} \\
\]
\[
m(\sing k) : Z(\sing k) \to Y(\sing k)
\]
All that is left is to define the cospan maps
\[
Z(\reg k) \to Z(\sing k) \qquad Z(\reg k + 1) \to Z(\sing k)
\]
such that they satisfy the following commutativity conditions:
\begin{equation}
\begin{tikzcd}[column sep=small]
Y(\reg k) \rar & Y(\sing k) \\
Z(\reg k) \rar \uar[equal] & Z(\sing k) \uar["m(\sing k)"']
\end{tikzcd}
\qquad
\begin{tikzcd}[column sep=small]
Y(\sing k) & Y(\reg k + 1) \lar \\
Z(\sing k) \uar["m(\sing k)"] & Z(\reg k + 1) \lar \uar[equal]
\end{tikzcd}
\end{equation}
If $(f_i)_\mathsf{s}^{-1}(k)$ is empty:
\begin{equation}
\begin{tikzcd}[column sep=small]
Z(\reg k) \rar & Z(\sing k) & Z(\reg k + 1) \lar \\
& X_i(\reg (f_i)_\mathsf{r}(k)) \ular[equal] \urar[equal] &
\end{tikzcd}
\end{equation}
If $(f_i)_\mathsf{s}^{-1}(k) = [a, b]$ is non-empty:
\begin{equation}
\begin{tikzcd}[column sep=small]
Z(\reg k) \rar & Z(\sing k) \\
X_i(\reg a) \rar \uar[equal] & X(\sing a) \uar["e_i(\sing a)"']
\end{tikzcd}
\qquad
\begin{tikzcd}[column sep=small]
Z(\sing k) & Z(\reg k + 1) \lar \\
X_i(\sing b) \uar["e_i(\sing b)"] & X_i(\reg b + 1) \lar \uar[equal]
\end{tikzcd}
\end{equation}

\begin{equation}
\begin{tikzcd}[column sep=small]
& Z(\sing k) & \\
X_i(\sing j) \urar["e_i(\sing j)"] & X_i(\reg j + 1) \lar \rar & X_i(\sing j + 1) \ular["e_i(\sing j + 1)"']
\end{tikzcd}
\end{equation}
If all preimages are empty, then $m(\sing k) = \id$, so we can define
\begin{align*}
Z(\reg k) \to Z(\sing k) &\coloneqq Y(\reg k) \to Y(\sing k) \\
Z(\reg k + 1) \to Z(\sing k) &\coloneqq Y(\reg k + 1) \to Y(\sing k)
\end{align*}
such that (1) and (2) hold.
Otherwise, if at least one of the preimages is non-empty, say $(f_i)_\mathsf{s}^{-1}(k) = [a, b]$, then we can define
\begin{align*}
Z(\reg k) \to Z(\sing k) &\coloneqq X_i(\reg a) \to X(\sing a) \xrightarrow{e_i(\sing a)} Z(\sing k) \\
Z(\reg k + 1) \to Z(\sing k) &\coloneqq X_i(\reg b + 1) \to X(\sing b) \xrightarrow{e_i(\sing b)} Z(\sing k)
\end{align*}
Now (1) always holds, and (2), (3), and (4) hold when post-composed with $m(\sing k)$, which is monic by \Cref{lma:factorisation-mono}, so they hold as well.

Finally, suppose that we have a family of commutative squares 
\[
\begin{tikzcd}
X_i \dar["e_i"'] \rar["f_i"] & Z \dar["m"] \\
Y \rar["g"'] & W
\end{tikzcd}    
\]
with $\set{e_i} \in \Sing(\mathbf{E})$ and $m \in \Relab(M)$.
We want to define a lift:
\[
\begin{tikzcd}
X_i \dar["e_i"'] \rar["f_i"] & Z \dar["m"] \\
Y \rar["g"'] \urar[dashed, "h" description] & W
\end{tikzcd}    
\]
Since $m$ is an $M$-relabelling, we necessarily have that $h_\mathsf{s} = g_\mathsf{s}$.
Now for every $k \in \abs{Y}$ and $j \in (g_\mathsf{s})^{-1}(k)$, the following square commutes, so by the orthogonal lifting property of $(\mathbf{E}, M)$, it has a unique lift:
\[
\begin{tikzcd}
X_i(\sing j) \dar["e_i(\sing j)"'] \rar["f_i(\sing j)"] & Z(\sing g_\mathsf{s}(k)) \dar["m(\sing g_\mathsf{s}(k))"] \\
Y(\sing k) \rar["g(\sing k)"'] \urar[dashed, "h(\sing k)" description] & W(\sing g_\mathsf{s}(k))
\end{tikzcd}    
\]
Therefore, $h$ is uniquely determined, concluding this proof.
\end{proof}

This factorisation enables us to propagate an anticontraction from a singular object of a zigzag to the whole zigzag. 
This is called \emph{recursive anticontraction} and is a generalisation of the recursive expansion procedure described in~\mbox{\cite[Section 5]{RV19}.}

\paragraph{Recursive anticontraction}
Suppose we are performing a recursive anticontraction on the singular object of a zigzag, so we have 
\[
\begin{tikzcd}[column sep=small]
X(\reg 0) \rar & X(\sing 0) & X(\reg 1) \lar \\
& A \uar &
\end{tikzcd}
\]
We lift this to a zigzag map according to the following scheme:
\begin{enumerate}
\item[(a)] First, attempt to find lifts on both sides as follows:
\[
\begin{tikzcd}[column sep=small]
X(\reg 0) \rar & X(\sing 0) & X(\reg 1) \lar \\
X(\reg 0) \uar[equal] \rar[dashed] & A \uar & X(\reg 1) \lar[dashed] \uar[equal]
\end{tikzcd}
\]
\item[(b)] If one lift exists, say $X(\reg 0) \to A$ but not the other, factorise the other morphism as $X(\reg 1) \to B \rightarrowtail X(\sing 0)$ using the factorisation structure on zigzag maps, and then attempt to find the anticontraction:
\[
\begin{tikzcd}[column sep=small]
& X(\reg 0) \rar & X(\sing 0) & X(\reg 1) \lar & \\
X(\reg 0) \urar[equal] \rar & A \urar & C \lar[dashed] \rar[dashed] & B \ular & X(\reg 1) \lar \ular[equal]
\end{tikzcd}
\]
where $X(\reg 1) \to X(\sing 0)$ factors as $X(\reg 1) \to B \rightarrowtail X(\sing 0)$.
\item[(c)] As a fallback, if neither lift exists, or no anticontraction in (2) exists, then just introduce a ``bubble'' as follows, re-using the given map $A \to X(\sing 0)$:
\[
\begin{tikzcd}[column sep=small]
& X(\reg 0) \rar & X(\sing 0) & X(\reg 1) \lar & \\
X(\reg 0) \urar[equal] \rar & X(\sing 0) \urar[equal] & A \lar \rar & X(\sing 0) \ular[equal] & X(\reg 1) \lar \ular[equal]
\end{tikzcd}
\]
\end{enumerate}

\noindent
Step (b) here is novel, and introduces new capabilities to the proof assistant.
We provide the following example.
This is a zigzag map obtained by step (b): the first singular slice is a relabelling given by factorisation, and the second singular slice is another anticontraction.
Note that this is step 2 of the naturality move in \Cref{fig:naturality-move-v2}.
\[
\begin{aligned}\scalebox{0.5}{\begin{tikzpicture}
\definecolor{generator-1-2-0-pos}{RGB}{192, 57, 43}
\definecolor{generator-2-2-0-pos}{RGB}{243, 156, 18}
\definecolor{generator-3-3-0-pos}{RGB}{142, 68, 173}
\definecolor{generator-0-0-1-pos}{RGB}{246, 245, 244}

\newcommand{\wire}[2]{
  \ifdefined\recolor\draw[color=\recolor, line width=10pt]\else\draw[color=#1, line width=5pt]\fi #2;
}
\newcommand{\clipped}[3]{
\begin{scope}
  \newcommand{\recolor}{#1}
  \clip#3;
  #2
\end{scope}
}

\begin{scope}[transparency group]
% Background surfaces
\fill[generator-0-0-1-pos] (0,0) -- (6,0) -- (6,4) -- (0,4) -- (0,0);
\newcommand{\layer}[1]{
  \clipped{generator-0-0-1-pos}{#1}{(0,0) -- (6,0) -- (6,4) -- (0,4) -- (0,0)}
  #1
}

% Wire layers
\wire{generator-2-2-0-pos}{(4,1) .. controls (4,1.8) and (3.6,2) .. (3,2) .. controls (2.4,2) and (2,2.2) .. (2,3)};
\layer{
\wire{generator-1-2-0-pos}{(2,0) -- (2,1) .. controls (2,1.8) and (2.4,2) .. (3,2) .. controls (3.6,2) and (4,2.2) .. (4,3) -- (4,4)};
\wire{generator-2-2-0-pos}{(4,0) -- (4,1)(2,3) -- (2,4)};
}
\end{scope}
\fill[generator-3-3-0-pos] (3,2) circle (0.21);
\end{tikzpicture}
}\end{aligned}
\quad
\begin{aligned}
\begin{tikzpicture}
\node (R0) at (0, 0.5) {};
\node (S0) at (0, 1.5) {};
\node (R1) at (0, 2.5) {};

\node (R0') at (1, 0) {};
\node (S0') at (1, 1) {};
\node (S1') at (1, 2) {};
\node (R2') at (1, 3) {};

\draw[->] (S0') to (S0);
\draw[->] (S1') to (S0);

\draw[double, double distance=2pt] (R0') to (R0);
\draw[double, double distance=2pt] (R2') to (R1);
\end{tikzpicture}
\end{aligned}
\quad
\begin{aligned}\scalebox{0.5}{\begin{tikzpicture}
\definecolor{generator-1-2-0-pos}{RGB}{192, 57, 43}
\definecolor{generator-2-2-0-pos}{RGB}{243, 156, 18}
\definecolor{generator-3-3-0-pos}{RGB}{142, 68, 173}
\definecolor{generator-0-0-1-pos}{RGB}{246, 245, 244}

\newcommand{\wire}[2]{
  \ifdefined\recolor\draw[color=\recolor, line width=10pt]\else\draw[color=#1, line width=5pt]\fi #2;
}
\newcommand{\clipped}[3]{
\begin{scope}
  \newcommand{\recolor}{#1}
  \clip#3;
  #2
\end{scope}
}

\begin{scope}[transparency group]
% Background surfaces
\fill[generator-0-0-1-pos] (0,0) -- (6,0) -- (6,6) -- (0,6) -- (0,0);
\newcommand{\layer}[1]{
  \clipped{generator-0-0-1-pos}{#1}{(0,0) -- (6,0) -- (6,6) -- (0,6) -- (0,0)}
  #1
}

% Wire layers
\wire{generator-2-2-0-pos}{(4,1) .. controls (4,1.8) and (3.6,2) .. (3,2) .. controls (2.4,2) and (2,2.2) .. (2,3)};
\layer{
\wire{generator-1-2-0-pos}{(2,0) -- (2,1) .. controls (2,1.8) and (2.4,2) .. (3,2) .. controls (3.6,2) and (4,2.2) .. (4,3) -- (4,6)};
\wire{generator-2-2-0-pos}{(4,0) -- (4,1)(2,3) -- (2,6)};
}
\end{scope}
\fill[generator-3-3-0-pos] (4,4) circle (0.21);
\end{tikzpicture}
}\end{aligned}
\]
Similarly, steps 5--6 of the Reidemeister III construction in \Cref{fig:reidemeister-move-v2} are also recursive anticontractions obtained by step (b).

\bibliographystyle{plainurl}
\bibliography{Anticolimits}

\begin{thebibliography}{10}

\bibitem{AMLH16}
Carlo Angiuli, Edward Morehouse, Daniel~R Licata, and Robert Harper.
\newblock Homotopical patch theory.
\newblock {\em ACM SIGPLAN Notices}, 49(9):243--256, 2014.

\bibitem{BGMS21}
John~C Baez, Fabrizio Genovese, Jade Master, and Michael Shulman.
\newblock Categories of nets.
\newblock In {\em 36th Annual ACM/IEEE Symposium on Logic in Computer Science}, pages 1--13. IEEE, 2021.

\bibitem{BST22}
Georgios Bakirtzis, Michail Savvas, and Ufuk Topcu.
\newblock Categorical semantics of compositional reinforcement learning.
\newblock {\em arXiv preprint arXiv:2208.13687}, 2022.

\bibitem{Cas12}
Gabriele Castellini.
\newblock {\em Categorical closure operators}.
\newblock Springer Science \& Business Media, 2012.

\bibitem{Dor18}
Christoph Dorn.
\newblock Associative $n$-categories.
\newblock {\em arXiv preprint arXiv:1812.10586}, 2018.

\bibitem{EEHP04}
Hartmut Ehrig, Karsten Ehrig, Annegret Habel, and Karl-Heinz Pennemann.
\newblock Constraints and application conditions: From graphs to high-level structures.
\newblock In {\em Graph Transformations: Second International Conference, ICGT 2004, Rome, Italy, September 28--October 1, 2004. Proceedings 2}, pages 287--303. Springer, 2004.

\bibitem{EPS73}
Hartmut Ehrig, Michael Pfender, and Hans~J{\"u}rgen Schneider.
\newblock Graph-grammars: An algebraic approach.
\newblock In {\em 14th Annual symposium on switching and automata theory}, pages 167--180. IEEE, 1973.

\bibitem{Gib02}
Jeremy Gibbons.
\newblock Towards a colimit-based semantics for visual programming.
\newblock In {\em International Conference on Coordination Languages and Models}, pages 166--173. Springer, 2002.

\bibitem{Hei23}
Lukas Heidemann.
\newblock Framed combinatorial topology with labels in $\infty$-categories.
\newblock {\em arXiv preprint arXiv:2305.06288}, 2023.

\bibitem{HRV22}
Lukas Heidemann, David Reutter, and Jamie Vicary.
\newblock Zigzag normalisation for associative $n$-categories.
\newblock In {\em 37th Annual ACM/IEEE Symposium on Logic in Computer Science}, pages 1--13. ACM, 2022.
\newblock \href {https://doi.org/10.1145/3531130.3533352} {\path{doi:10.1145/3531130.3533352}}.

\bibitem{LP90}
Huimin Lin and Man-chi Pong.
\newblock Modelling multiple inheritance with colimits.
\newblock {\em Formal Aspects of Computing}, 2:301--311, 1990.

\bibitem{Lur09}
Jacob Lurie.
\newblock {\em Higher topos theory}.
\newblock Princeton University Press, 2009.

\bibitem{RV19}
David Reutter and Jamie Vicary.
\newblock High-level methods for homotopy construction in associative $n$-categories.
\newblock In {\em 34th Annual ACM/IEEE Symposium on Logic in Computer Science}, pages 1--13. IEEE, 2019.
\newblock \href {https://doi.org/10.1109/LICS.2019.8785895} {\path{doi:10.1109/LICS.2019.8785895}}.

\bibitem{SV23}
Chiara Sarti and Jamie Vicary.
\newblock Posetal diagrams for logically-structured semistrict higher categories.
\newblock {\em arXiv preprint arXiv:2305.11637}, 2023.

\bibitem{Smi06}
Douglas~R Smith.
\newblock Composition by colimit and formal software development.
\newblock In {\em Algebra, Meaning, and Computation: Essays dedicated to Joseph A. Goguen on the Occasion of His 65th Birthday}, pages 317--332. Springer, 2006.

\bibitem{Szp30}
Edward Szpilrajn.
\newblock Sur l'extension de l'ordre partiel.
\newblock {\em Fundamenta mathematicae}, 1(16):386--389, 1930.

\bibitem{TV23}
Calin Tataru and Jamie Vicary.
\newblock A layout algorithm for higher-dimensional string diagrams.
\newblock {\em arXiv preprint arXiv:2305.06938}, 2023.

\bibitem{Wra93}
Gavin~C Wraith.
\newblock Using the generic interval.
\newblock {\em Cahiers de topologie et g{\'e}om{\'e}trie diff{\'e}rentielle cat{\'e}goriques}, 34(4):259--266, 1993.

\end{thebibliography}

\end{document}